\numberwithin{equation}{section}
\theoremstyle{norm}
\newtheorem{thm}{Theorem}[section]
\newtheorem{lem}[thm]{Lemma}
\newtheorem{prop}[thm]{Proposition}
\newtheorem{df}[thm]{Definition}
\newtheorem{conj}[thm]{Conjecture}
\newtheorem{cor}[thm]{Corollary}
\newtheorem{rem}[thm]{Remark}
\newcommand{\Hm}{H\underline{\mathbb{F}_2}}
\newcommand{\HZ}{H\underline{\mathbb{Z}}}
\newcommand{\F}{\mathbb{F}_2}
\newcommand{\as}{a_\sigma}
\newcommand{\us}{u_\sigma}
\newcommand{\EMaySS}{C_2\text{-MaySS}}
\newcommand{\EASS}{C_2\text{-ASS}}
\newcommand{\Sq}{\text{Sq}}
\DeclareMathOperator{\Ext}{\text{Ext}}
\DeclareMathOperator{\ASS}{\text{ASS}}
\DeclareMathOperator{\SliceSS}{\text{SliceSS}}
\DeclareMathOperator{\HFPSS}{\text{HFPSS}}
\title[Hurewicz Images of Real Bordism Theory and $E\mathbb{R}(n)$]{Hurewicz Images of Real Bordism Theory and Real Johnson--Wilson Theories}
\author{Guchuan Li}
\address{Department of Mathematics, Northwestern University, Evanston, IL 60208}
\email{guchuanli2013@u.northwestern.edu}
\author{XiaoLin Danny Shi}
\address{Department of Mathematics, Harvard University, Cambridge, MA 02138}
\email{dannyshi@math.harvard.edu}
\author{Guozhen Wang}
\address{Shanghai Center for Mathematical Sciences, Fudan University, Shanghai, China, 200433}
\email{wangguozhen@fudan.edu.cn}
\author{Zhouli Xu}
\address{Department of Mathematics, Massachusetts Institute of Technology, Cambridge, MA, 02142}
\email{xuzhouli@mit.edu}
\begin{document}

\maketitle

\begin{abstract}
We show that the Hopf elements, the Kervaire classes, and the $\bar{\kappa}$-family in the stable homotopy groups of spheres are detected by the Hurewicz map from the sphere spectrum to the $C_2$-fixed points of the Real bordism spectrum.  A subset of these families is detected by the $C_2$-fixed points of Real Johnson--Wilson theory $E\mathbb{R}(n)$, depending on $n$.  In the proof, we establish an isomorphism between the slice spectral sequence and the $C_2$-equivariant May spectral sequence of $BP_\mathbb{R}$.  
\end{abstract}

\tableofcontents


\section{Introduction}

\subsection{Motivation and main results}

In 2009, Hill, Hopkins, and Ravenel resolved a longstanding open problem in algebraic topology.  In their seminal paper \cite{HHRKervaire}, they showed that the Kervaire invariant elements $\theta_j$ do not exist for $j \geq 7$ (see also \cite{HaynesKervaire, HHRCDM1, HHRCDM2} for surveys on the result).  The crux of their proof relies on a detecting spectrum $\Omega$, which detects the Kervaire invariant elements.

\begin{thm}[Hill--Hopkins--Ravenel Detection Theorem]
If $\theta_j \in \pi_{2^{j+1}-2} \mathbb{S}$ is an element of Kervaire invariant 1, and $j >2$, then the Hurewicz image of $\theta_j$ under the map $\pi_*\mathbb{S} \to \pi_*\Omega$ is nonzero.  
\end{thm}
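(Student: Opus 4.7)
My plan is to prove the theorem by comparing the classical mod $2$ Adams spectral sequence for $\mathbb{S}$ with a spectral sequence that computes $\pi_*\Omega$. By definition of Kervaire invariant one, each $\theta_j$ is detected on the Adams $E_2$-page by the permanent cycle $h_j^2 \in \Ext_{\mathcal{A}}^{2, 2^{j+1}}(\F, \F)$. It therefore suffices to show that, under a map of spectral sequences induced by the Hurewicz map $\mathbb{S} \to \Omega$, this class $h_j^2$ lands in a nonzero permanent cycle that is not a boundary.

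For the target spectral sequence, I would use the homotopy fixed-point spectral sequence (or, in the language of the excerpt, an equivalent slice spectral sequence) for the $C_8$-action on the periodicization $D^{-1} N_{C_2}^{C_8} MU_\mathbb{R}$, whose $C_8$-fixed points are $\Omega$. On $E_2$ one obtains an algebraic object of the form $H^*(C_8;\pi_*^u R)$ with $R$ a suitable $MU_\mathbb{R}$-algebra, into which the classical Adams $E_2$ maps via the Hurewicz comparison. I would then identify the image of $h_j$ as a specific cohomology class built from the formal group law generators in $\pi_*^u R$, and verify algebraically that $h_j^2$ is sent to a nonzero class in bidegree $(2^{j+1}, 2)$. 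This step is essentially a cocycle computation in group cohomology and should be straightforward once the comparison of $E_2$-pages is set up.

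The main obstacle is showing that this image class survives to $E_\infty$ and is not a boundary. Here I would invoke HHR's \emph{Periodicity Theorem}, which makes the relevant portion of the spectral sequence $D$-periodic for an invertible class $D$ of large positive degree, together with the \emph{Gap Theorem} $\pi_{-2}\Omega = 0$. The gap kills many would-be sources of incoming differentials, while periodicity propagates these vanishing statements across every $D$-translate of the bidegree in question. A stem-and-filtration count should then leave only a short list of possible differentials into and out of the target bidegree; each of these I would rule out by degree comparison with the periodicity class $D$. Combined with an $E_\infty$ nonvanishing check in stem $2^{j+1}-2$, this yields a nonzero Hurewicz image for $\theta_j$, as desired.
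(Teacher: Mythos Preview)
First, note that this theorem is not proved in the present paper; it is quoted from Hill--Hopkins--Ravenel as background. The paper does, however, summarize HHR's method in the introduction (see the discussion of \cite[Theorem~11.2]{HHRKervaire}), and it is against that summary that your proposal should be compared.

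Your outline has the right overall shape---compare spectral sequences and track the image of $h_j^2$---but two points need correction. First, HHR's comparison runs from the \emph{Adams--Novikov} spectral sequence to the $C_8$-homotopy fixed point spectral sequence, not from the classical mod~$2$ Adams spectral sequence. One lifts $h_j^2$ to a class $x \in \Ext_{MU_*MU}^{2,2^{j+1}}(MU_*, MU_*)$ and proves that its image in $H^2(C_8; \pi_{2^{j+1}}^u MU^{((C_8))})$ is nonzero; this is their algebraic detection theorem, and as the paper remarks, the Detection Theorem then ``follows easily'' by a short filtration argument.

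Second, and more seriously, your invocation of the Gap and Periodicity Theorems is misplaced. Those two results combine to show that $\pi_{2^{j+1}-2}\Omega = 0$ for $j \geq 7$: the entire homotopy group vanishes, so \emph{nothing} survives in that stem. This is exactly how HHR deduce the \emph{non-existence} of $\theta_j$ for $j \geq 7$ \emph{from} the Detection Theorem; it is not an ingredient in the proof of the Detection Theorem itself. If you used Gap and Periodicity as you propose, then for $j \geq 7$ you would conclude that the image class is zero---the opposite of what you want---and for $3 \leq j \leq 6$ the periodicity shift does not land in the gap range, so the argument gives no information. The actual survival of the filtration-$2$ image class is handled by direct inspection of what lies in the relevant bidegrees, not by the Gap or Periodicity Theorems.
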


The detecting spectrum $\Omega$ is constructed as the $C_8$-fixed point of a genuine $C_8$-equivariant spectrum $\Omega_\mathbb{O}$, which is an equivariant localization of $MU^{((C_8))} := N_{C_2}^{C_8}MU_\mathbb{R}$.  Here, $MU_\mathbb{R}$ is the Real cobordism spectrum of Landweber, Fujii, and Araki \cite{LandweberMUR, FujiiMUR, Araki} and $N_{C_2}^{C_8}(-)$ is the Hill--Hopkins--Ravenel norm functor.  To analyze the equivariant homotopy groups of $\Omega_\mathbb{O}$, Hill, Hopkins, and Ravenel generalized the $C_2$-equivariant filtration of Hu--Kriz (\cite{HuKriz}) and Dugger (\cite{DuggerKR}) to a $G$-equivariant Postnikov filtration for all finite groups $G$.  They called this the slice filtration.  Given any $G$-equivariant spectrum $X$, the slice filtration produces the slice tower $\{P^*X\}$, whose associated slice spectral sequence is strongly convergent and converges to the $RO(G)$-graded homotopy groups $\pi_\bigstar^G X$.  Using the slice spectral sequence, Hill, Hopkins, and Ravenel proved that
$$\displaystyle \pi_{2^{j+1}-2} \Omega = \pi_{2^{j+1}-2}^{C_8} \Omega_\mathbb{O} = 0$$
for all $j \geq 7$, hence deducing the nonexistence of the corresponding Kervaire invariant elements.\\

We are interested in proving more detection theorems for the fixed points of the equivariant theories $MU^{((C_{2^n}))} := N_{C_2}^{C_{2^n}}MU_\mathbb{R}$ and their localizations.  Our motivation is as follows: classically, $\pi_*MU$ is a polynomial ring, hence torsion free, and the map $\pi_*\mathbb{S} \to \pi_*MU$ detects no nontrivial elements in the stable homotopy groups of spheres.  Equivariantly, however, computations of Hu--Kriz \cite{HuKriz}, Dugger \cite{DuggerKR}, Kitchloo--Wilson \cite{KitchlooWilsonFibration}, and Hill--Hopkins--Ravenel \cite{HHRKervaire, HHRKH} show that there are many torsion classes in the equivariant homotopy groups of the theories above.  Since the Kervaire invariant elements are  detected by the fixed point of a localization of $MU^{((C_8))}$, there should be other classes in the stable homotopy groups of spheres that are also detected by such theories.  We prove this is indeed the case.

\begin{thm}[Theorem~\ref{thm:MURBPRDetection}, Detection Theorems for $MU_\mathbb{R}$ and $BP_\mathbb{R}$]\label{INTROthm:MURBPRDetection}
The Hopf elements, the Kervaire classes, and the $\bar{\kappa}$-family (see Definition~\ref{introDef:kappabar}) are detected by the Hurewicz maps $\pi_*\mathbb{S} \to \pi_* MU_\mathbb{R}^{C_2}$ and $\pi_*\mathbb{S} \to \pi_* BP_\mathbb{R}^{C_2}$.  
\end{thm}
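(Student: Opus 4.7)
The plan is to work via the slice spectral sequence $\SliceSS(BP_\mathbb{R})$ converging to $\pi_\bigstar^{C_2} BP_\mathbb{R}$, leveraging the isomorphism (the main technical input established earlier in the paper) with the $C_2$-equivariant May spectral sequence $\EMaySS$ for $BP_\mathbb{R}$ to make the $E_2$-page and its differentials explicit. For each of the named families---the Hopf elements $\eta, \nu, \sigma$, the Kervaire classes $\theta_j$, and the $\bar\kappa$-family---I will (i) locate a specific class on the $E_2$-page that detects the Hurewicz image, (ii) verify that this class survives all slice/May differentials, and (iii) read off non-triviality in the integer-graded piece of $\pi_\bigstar^{C_2} BP_\mathbb{R}$.

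A first reduction is from $MU_\mathbb{R}$ to $BP_\mathbb{R}$: since $BP_\mathbb{R}$ is a retract of $MU_\mathbb{R}$ after $2$-localization (an equivariant refinement of Quillen's splitting, as used by Hu--Kriz), and since the named classes are all $2$-primary, it suffices to establish detection for $BP_\mathbb{R}^{C_2}$.

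To locate each class on the $E_2$-page, I would use naturality: the Hurewicz map $\mathbb{S} \to BP_\mathbb{R}^{C_2}$ induces a map of $C_2$-equivariant Adams spectral sequences, and forgetting the $C_2$-action compares this with the classical $\ASS$ for $\mathbb{S}$. Since the Hopf elements are detected classically by $h_1, h_2, h_3 \in \Ext_{\mathcal{A}}^{1,*}(\F, \F)$, the Kervaire classes by $h_j^2$, and $\bar\kappa$ by $g \in \Ext_{\mathcal{A}}^{4, 24}(\F, \F)$, I would identify explicit preimages of these elements in the $\EMaySS$ for $BP_\mathbb{R}$, expressed in terms of the equivariant generators (the $\as, \us$, and polynomial generators coming from the slice associated graded) via the slice/May isomorphism.

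The principal obstacle is showing survival. For the Hopf elements, this should follow from a vanishing-line argument on the slice/May $E_2$-page, since they lie in low filtration. For the Kervaire classes, the argument should parallel the Hill--Hopkins--Ravenel detection theorem for $\Omega$, adapted from $C_8$ down to $C_2$ and applied to $BP_\mathbb{R}$ itself rather than a localization of a norm; here I expect the polynomial structure of the $E_2$-page together with the Leibniz rule to force the $h_j^2$-type representatives to be permanent cycles. The hardest case is the $\bar\kappa$-family, which lives in filtration $4$ where there is considerable room for differentials; ruling these out will require careful bookkeeping, multiplicative arguments (e.g.\ using $\eta$- and $\nu$-multiplications already detected at lower filtration), and explicit knowledge of the Hill--Hopkins--Ravenel slice differentials $d_r(\bar v_i)$ translated through the slice/May isomorphism.
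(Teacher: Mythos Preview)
Your overall framework---reduce to $BP_\mathbb{R}$, pass through the $\EMaySS(BP_\mathbb{R})\cong\SliceSS(BP_\mathbb{R})$ isomorphism, and track the named families---matches the paper. But there are two genuine gaps.

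First, you are missing the key technical device on the \emph{source} side: the modified May spectral sequence. The classical May filtration on $\mathcal{A}_*$ is increasing, while the equivariant May filtration on $\mathcal{A}^m_\bigstar$ (by powers of $a_\sigma$) is decreasing, so the map $\mathcal{A}_*\to\mathcal{A}^m_\bigstar\to\Lambda^m_\bigstar$ does \emph{not} induce a map of May spectral sequences unless you refilter $\mathcal{A}_*$. The paper sets $|h_{i,j}|=2^j-1$ (decreasing), computes that $h_{i,j}\mapsto \tau_{i+j-1}u_\sigma^{2^{i+j-1}-2^j}a_\sigma^{2^j-1}$ modulo higher $a_\sigma$-powers, and only then obtains a map $\text{MMaySS}(\mathbb{S})\to\EMaySS(BP_\mathbb{R})$. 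Without this you have no map of spectral sequences to push classes along.

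Second, and more seriously, you have the difficulty of the $\bar\kappa$-family located in the wrong spectral sequence. Survival in the \emph{target} $\SliceSS(BP_\mathbb{R})$ is easy: all slice differentials are known (they hit $u_{2\sigma}^{2^{k-1}}$, not $\bar v_i$---the $\bar v_i$ are permanent cycles), and one checks directly that $\bar v_{n+1}^4 u_\sigma^{2^{n+2}} a_\sigma^{4(2^n-1)}$ survives. The hard part is entirely on the \emph{source} side: one must show that $h_{2n}^4$ is a permanent cycle in the modified May spectral sequence and that it detects $g_n$. The paper does this by (i) a lengthy case-by-case analysis ruling out $d_9,d_{11},d_{13},d_{15}$ on $h_{21}^4$, (ii) Nakamura's $\Sq^0$ operation on the cobar complex to propagate to all $h_{2n}^4$, and (iii) Lin's classification of $\Ext^{4,*}$ to identify the surviving class with $g_n$. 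None of this is accessible via $\eta$- or $\nu$-multiplications or via slice differentials in the target. Relatedly, your plan to ``parallel the Hill--Hopkins--Ravenel detection theorem, adapted from $C_8$ down to $C_2$'' for the Kervaire classes will not work: the paper explicitly notes (following HHR's own remark) that the HHR algebraic detection theorem fails for $C_2$ due to a jump of filtration; the Adams-spectral-sequence route taken here is precisely what circumvents that failure.
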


Once we obtain the detection theorem for $\pi_*MU_\mathbb{R}^{C_2}$, we use the Hill--Hopkins--Ravenel norm functor to show that these elements are also detected by the $C_{2^n}$-fixed point of ${MU^{((C_{2^n}))}}$: 

\begin{cor}[Corollary~\ref{cor:DetectionMUG}, Detection Theorem for $MU^{((G))}$]\label{INTROcor:DetectionMUG} 
For any finite group $G$ containing $C_2$, the $G$-fixed point of $MU^{((G))}$ detects the Hopf elements, the Kervaire classes, and the $\bar{\kappa}$-family. 
\end{cor}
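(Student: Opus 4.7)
The plan is to deduce the corollary from Theorem~\ref{INTROthm:MURBPRDetection} by producing a map of spectra $\rho\colon (MU^{((G))})^G \to MU_{\mathbb{R}}^{C_2}$ through which the Hurewicz map of $\pi_*\mathbb{S}$ factors. Any element of $\pi_*\mathbb{S}$ whose image in $\pi_* MU_{\mathbb{R}}^{C_2}$ is nonzero will then already be nonzero in $\pi_*(MU^{((G))})^G$, so the three families provided by Theorem~\ref{INTROthm:MURBPRDetection} transfer to detection by $(MU^{((G))})^G$.

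To construct $\rho$, I would use the Hill--Hopkins--Ravenel norm--restriction adjunction. Since $MU^{((G))} = N_{C_2}^G MU_{\mathbb{R}}$, the identity corresponds under adjunction to a unit $\eta\colon MU_{\mathbb{R}} \to i^*_{C_2} MU^{((G))}$ of $C_2$-commutative ring spectra. The double-coset formula for the norm identifies $i^*_{C_2} MU^{((G))}$ with an indexed smash product of copies of $MU_{\mathbb{R}}$ indexed by $C_2$-orbits on $G/C_2$, and the iterated multiplication on the $C_2$-commutative ring $MU_{\mathbb{R}}$ assembles into a $C_2$-equivariant ring map $\mu\colon i^*_{C_2} MU^{((G))} \to MU_{\mathbb{R}}$ satisfying $\mu \circ \eta = \mathrm{id}$. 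Taking $C_2$-fixed points of $\mu$ and precomposing with the standard restriction of fixed points gives
$$\rho\colon (MU^{((G))})^G \longrightarrow (MU^{((G))})^{C_2} \xrightarrow{\,\mu^{C_2}\,} MU_{\mathbb{R}}^{C_2}.$$

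Next I verify that $\rho$ intertwines the Hurewicz maps. For $\alpha \in \pi_n\mathbb{S}$, its image in $\pi_n(MU^{((G))})^G$ is represented by the composite of $\alpha$ with the $G$-equivariant unit $\mathbb{S} \to MU^{((G))}$. Restricting to $C_2$-spectra and postcomposing with $\mu$ replaces this unit by $\mu \circ i^*_{C_2}(\mathrm{unit}_{MU^{((G))}}) = \mu \circ \eta \circ \mathrm{unit}_{MU_{\mathbb{R}}} = \mathrm{unit}_{MU_{\mathbb{R}}}$, because $\eta$ preserves units and $\mu \circ \eta = \mathrm{id}$. Thus $\rho_*$ sends the Hurewicz image of $\alpha$ in $\pi_*(MU^{((G))})^G$ to the Hurewicz image of $\alpha$ in $\pi_* MU_{\mathbb{R}}^{C_2}$, and Theorem~\ref{INTROthm:MURBPRDetection} finishes the argument for the Hopf elements, the Kervaire classes, and the $\bar{\kappa}$-family.

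The main obstacle is showing that $\mu$ genuinely descends to a $C_2$-equivariant ring map. The $C_2$-action on $i^*_{C_2} MU^{((G))}$ both permutes factors indexed by $C_2$-orbits on $G/C_2$ and acts by complex conjugation within each factor, and the iterated multiplication on $MU_{\mathbb{R}}$ must respect both. For the cyclic case $G = C_{2^n}$, the action on $G/C_2$ is trivial and $\mu$ is simply the factorwise iterated multiplication on $MU_{\mathbb{R}}^{\wedge 2^{n-1}}$; for a general $G$ containing $C_2$, one collapses each $C_2$-orbit in the smash product using the commutative $C_2$-equivariant ring structure of $MU_{\mathbb{R}}$ and then multiplies across orbits. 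This is the only point in the argument that requires anything beyond the formal norm--restriction adjunction.
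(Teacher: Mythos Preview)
Your proposal is correct and follows essentially the same route as the paper. The paper's argument (Theorem~\ref{normDetection}) factors the Hurewicz map through
\[
(N_{C_2}^G MU_\mathbb{R})^G \longrightarrow (i_{C_2}^* N_{C_2}^G MU_\mathbb{R})^{C_2} = (MU_\mathbb{R} \wedge \cdots \wedge MU_\mathbb{R})^{C_2} \longrightarrow MU_\mathbb{R}^{C_2},
\]
using restriction of fixed points followed by the multiplication map coming from the $E_\infty$ structure on $MU_\mathbb{R}$; this is exactly your map $\rho$. Your additional discussion of the unit $\eta$, the identity $\mu\circ\eta=\mathrm{id}$, and the double-coset description of $i_{C_2}^* N_{C_2}^G MU_\mathbb{R}$ makes explicit what the paper packages into the phrase ``obtained by the multiplicative structure on $E$,'' but the underlying idea is identical.
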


We pause here to discuss some implications of Theorem~\ref{INTROthm:MURBPRDetection}, as well as what we mean by the ``$\bar{\kappa}$-family''.  It is well known that the Hopf elements are represented by the elements 
$$h_i \in \text{Ext}_{\mathcal{A}_*}^{1, 2^i}(\mathbb{F}_2, \mathbb{F}_2)$$ 
on the $E_2$-page of the classical Adams spectral sequence at the prime 2.  By Adams's solution of the Hopf invariant one problem \cite{AdamsHopfInvariant}, only $h_0$, $h_1$, $h_2$, and $h_3$ survive to the $E_\infty$-page.  By Browder's work \cite{Browder}, the Kervaire classes $\theta_j \in \pi_{2^{j+1}-2}\mathbb{S}$, if they exist, are represented by the elements 
$$h_j^2 \in \text{Ext}_{\mathcal{A}_*}^{2, 2^{j+1}}(\mathbb{F}_2, \mathbb{F}_2)$$
on the $E_2$-page.  For $j \leq 5$, $h_j^2$ survives.  The case $\theta_4 \in \pi_{30} \mathbb{S}$ is due to Barratt, Mahowald, and Tangora \cite{MahowaldTangoraTheta4, BarratMahowaldTangoraTheta4}, and the case $\theta_5 \in \pi_{62} \mathbb{S}$ is due to Barratt, Jones, and Mahowald \cite{BarrattJonesMahowaldTheta5}. The fate of $h_6^2$ is unknown.  Hill, Hopkins, and Ravenel \cite{HHRKervaire} showed that the $h_j^2$, for $j \geq 7$, cannot survive to the $E_\infty$-page.  Given this information, Theorem~\ref{INTROthm:MURBPRDetection} and Corollary~\ref{INTROcor:DetectionMUG} assert that the elements $\eta$, $\nu$, $\sigma$, and $\theta_j$, for $1 \leq j \leq 5$, are detected by $\pi_*^G MU^{((G))}$.  The last unknown Kervaire class, $\theta_6$, will also be detected, should it survive the Adams spectral sequence.  

To introduce the $\bar{\kappa}$-family, we appeal to Lin's complete classification of the groups $\Ext^{\leq 4, t}_{\mathcal{A}_*}(\F, \F)$ \cite{LinExt}.  In his classification, Lin showed that there is a family $\{g_k \,| \, k \geq 1\}$ of indecomposable elements with 
$$g_k \in \Ext^{4, 2^{k+2} + 2^{k+3}}_{\mathcal{A}_*}(\F, \F).$$
The first element of this family, $g_1$, is in bidegree $(4, 24)$.  It survives the Adams spectral sequence to become $\bar{\kappa} \in \pi_{20} \mathbb{S}$.  It is for this reason that we name this family the $\bar{\kappa}$-family.  The element $g_2$ also survives to become the element $\bar{\kappa}_2 \in \pi_{44}\mathbb{S}$.  Theorem~\ref{INTROthm:MURBPRDetection} and Corollary~\ref{INTROcor:DetectionMUG} assert that they are both detected by $\pi_*^G MU^{((G))}$.  Recent computations of Isaksen--Wang--Xu \cite{IsaksenWangXu} show that $g_3$ supports a nontrivial $d_5$-differential and therefore $\bar{\kappa}_3$ does not exist in $\pi_{92}\mathbb{S}$.  For $k \geq 4$, the fate of $g_k$ is unknown ($g_4$ is in stem 188).  Nevertheless, they will be detected by $\pi_*^G MU^{((G))}$, should they survive the Adams spectral sequence. 

\begin{df}\rm \label{introDef:kappabar}
The \textit{$\bar{\kappa}$-family} consists of the homotopy classes detected by the surviving $g_k$-family.  
\end{df}

To prove Theorem~\ref{INTROthm:MURBPRDetection}, first observe that 2-locally, $MU_\mathbb{R}$ splits as a wedge of suspensions of $BP_\mathbb{R}$.  Therefore we only need to prove the claim for $BP_\mathbb{R}$.  To establish the link between the famlies $\{h_i\}$, $\{h_j^2\}$, and $\{g_k\}$ and the equivariant homotopy groups of $BP_\mathbb{R}$, we use the $C_2$-equivariant Adams spectral sequence developed by Greenlees \cite{GreenleesThesis, Greenlees1988, Greenlees1990} and Hu--Kriz \cite{HuKriz}.  More precisely, we analyze the following maps of Adams spectral sequences 
$$\begin{tikzcd} 
\text{classical Adams spectral sequence of } \mathbb{S} \ar[r, Rightarrow] \ar[d] & (\pi_*\mathbb{S})^\wedge_2 \ar[d] \\ 
C_2\text{-equivariant Adams spectral sequence of }\mathbb{S} \ar[r, Rightarrow] \ar[d] & (\pi_\bigstar^{C_2} \mathbb{S})^\wedge_2 \ar[d] \\ 
C_2\text{-equivariant Adams spectral sequence of } {BP_\mathbb{R}} \ar[r, Rightarrow] & (\pi_\bigstar^{C_2} BP_\mathbb{R})^\wedge_2
\end{tikzcd}$$
and prove the following. 
\begin{thm}[Algebraic Detection Theorem]\label{INTROthm:algebraicDetection}
The images of the elements $\{h_i \,|\, i \geq 1\}$, $\{h_j^2 \,|\, j \geq 1\}$, and $\{g_k \,|\, k \geq 1\}$ on the $E_2$-page of the classical Adams spectral sequence of $\mathbb{S}$ are nonzero on the $E_2$-page of the $C_2$-equivariant Adams spectral sequence of $BP_\mathbb{R}$.  
\end{thm}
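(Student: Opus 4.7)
The plan is to reduce the $E_2$-page computation of the $\EASS$ for $BP_\mathbb{R}$ to a tractable $\Ext$-calculation via change-of-rings over the Hu--Kriz $C_2$-equivariant dual Steenrod algebra $\mathcal{A}^{C_2}_\star$. Classically, a change-of-rings turns the Adams $E_2$ for $BP$ into $\Ext$ over an exterior algebra on Milnor primitives, yielding the polynomial ring $\F[v_0,v_1,\dots]$. I would carry out the analogous construction equivariantly: identify $\Hm_\star BP_\mathbb{R}$ as a comodule over $\mathcal{A}^{C_2}_\star$ (following Hu--Kriz), and then rewrite the $E_2$-page of the $\EASS$ for $BP_\mathbb{R}$ as an $\Ext$ over a much smaller sub-Hopf-algebroid. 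The expected picture is a polynomial-type module on lifts $\bar v_i$ of the classical $v_i$, together with equivariant generators such as $\as$ and $u_{2\sigma}$.

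With this identification in place, I would take explicit cobar representatives of $h_i$, $h_j^2$, and $g_k$ (the linear generator $\bar\xi_1^{2^i}$, its square, and Lin's explicit $4$-fold cocycle, respectively) and transport them along the vertical maps in the diagram of the excerpt. These maps are induced on cobar complexes by the inclusion $\mathcal{A}_* \hookrightarrow \mathcal{A}^{C_2}_\star$ followed by the $BP_\mathbb{R}$-based change-of-rings, so the images retain the same symbolic form modulo the equivariant relations (for example, the Hu--Kriz relation $\bar\xi_1^2 = \bar\xi_2 \as + \bar\tau_1 \us$). To verify nonvanishing, I would invoke the isomorphism between the slice spectral sequence of $BP_\mathbb{R}$ and the $\EMaySS$ established earlier in the paper. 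Via this isomorphism, the $\EASS$ $E_2$ of $BP_\mathbb{R}$ is accessible through the slice $E_2$-page, whose generators and module structure are explicit. I would then match each image with a specific nonzero monomial on the slice $E_2$: $h_i$ with a class built from $\bar v_i$ and a power of $\as$; $h_j^2$ with a class involving $\bar v_j$ and a higher power of $\as$; and $g_k$ with a monomial in the $\bar v_j$'s predicted by Lin's representative.

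The main obstacle will be producing these explicit matchings, particularly for $g_k$, whose cobar representative is intricate, and for $h_j^2$, where one must check that the equivariant relations do not cause the square to vanish after passage to the smaller Hopf algebroid associated to $BP_\mathbb{R}$. Once the matchings are in hand, linear independence of the resulting monomials in the slice $E_2$-page is immediate from its free polynomial structure over the appropriate base ring.
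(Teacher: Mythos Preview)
Your broad architecture matches the paper: change-of-rings identifies the target $E_2$ as $\Ext_{\Lambda^m_\bigstar}(\Hm_\bigstar,\Hm_\bigstar)$, and the slice/\EMaySS\ isomorphism is indeed the tool that makes this $\Ext$ group explicit. For $h_i$ and $h_j^2$ your plan would essentially go through, since their cobar representatives $[\zeta_1^{2^i}]$ and $[\zeta_1^{2^j}|\zeta_1^{2^j}]$ map transparently under Theorem~\ref{thm:SteenrodAlgebraMap} to $\bar v_i a_\sigma^{2^i-1}$ and its square.

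The gap is in the treatment of $g_k$. You propose to transport ``Lin's explicit 4-fold cocycle'' along the map of cobar complexes and then recognize its image as a monomial. The paper does \emph{not} do this, and for good reason: any explicit cobar representative of $g_k$ involves many terms, and pushing it through the map $\mathcal{A}_*\to\Lambda^m_\bigstar$ and then simplifying modulo the relations $\tau_i^2=\tau_{i+1}a_\sigma$ and $\eta_R(u_\sigma)=u_\sigma+\tau_0 a_\sigma$ is not a tractable direct computation. Instead, the paper introduces a \emph{modified May spectral sequence} for $\mathcal{A}_*$: one refilters $\mathcal{A}_*$ by a \emph{decreasing} filtration with $|h_{i,j}|=2^j-1$, chosen precisely so that it is compatible with the $(a_\sigma)$-adic filtration on $\Lambda^m_\bigstar$. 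This yields a genuine map of May spectral sequences $\text{MMaySS}(\mathbb{S})\to\EMaySS(BP_\mathbb{R})\cong\SliceSS(BP_\mathbb{R})$, and on $E_1$-pages the map sends $h_{2n}^4\mapsto \bar v_{n+1}^4 u_\sigma^{2^{n+2}}a_\sigma^{4(2^n-1)}$. The real work, which your proposal entirely omits, is then to prove that $h_{2n}^4$ is a \emph{permanent cycle} in the MMaySS and hence detects $g_n$ (using Lin's uniqueness of $g_n$ in its bidegree). The paper does this by a lengthy case-by-case differential analysis for $h_{21}^4$ (ruling out $d_9,d_{11},d_{13},d_{15}$ by hand), and then propagates to all $n$ via Nakamura's $Sq^0$ operation on the cobar complex.

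One further imprecision: you speak of matching images with monomials ``on the slice $E_2$'' and appeal to its ``free polynomial structure'' for linear independence. But the slice $E_2$ is the $E_1$ of the \EMaySS, not the Adams $E_2$; the Adams $E_2$ is the \emph{abutment} of the slice spectral sequence. So nonvanishing requires showing the relevant monomials survive all slice differentials (Proposition~\ref{prop:SliceSSsurvive}), not merely that they are nonzero on the polynomial $E_2$-page.
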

It turns out that for degree reasons, the $C_2$-equivariant Adams spectral sequence of $BP_\mathbb{R}$ degenerates after the $E_2$-page.  From this, Theorem~\ref{INTROthm:MURBPRDetection} easily follows from Theorem~\ref{INTROthm:algebraicDetection} because if any of $h_i$, $h_j^2$, or $g_k$ survives to the $E_\infty$-page of the classical Adams spectral sequence to represent an element in the stable homotopy groups of spheres, it must be detected by $\pi_\bigstar^{C_2}BP_\mathbb{R}$.  \\

The proof of Theorem~\ref{INTROthm:algebraicDetection} requires us to analyze the algebraic maps 
$$\text{Ext}_{\mathcal{A}_*}(\mathbb{F}_2, \mathbb{F}_2) \to \text{Ext}_{\mathcal{A}_\bigstar^m}(\Hm_\bigstar, \Hm_\bigstar) \to \text{Ext}_{\Lambda_\bigstar^m}(\Hm_\bigstar, \Hm_\bigstar).$$
They are maps on the $E_2$-pages of the Adams spectral sequences above.  Here, $\mathcal{A}_*:= (H\mathbb{F}_2 \wedge H\mathbb{F}_2)_*$ is the classical dual Steenrod algebra; $\mathcal{A}_\bigstar^m:= (\Hm \wedge \Hm)_\bigstar$ is the genuine $C_2$-equivariant dual Steenrod algebra; $\Lambda_\bigstar^m$ is a quotient of $\mathcal{A}_\bigstar^m$.  Hu and Kriz \cite{HuKriz} studied $\mathcal{A}_\bigstar^m$ and completely computed the Hopf algebroid structure of $(\Hm_\bigstar, \mathcal{A}_\bigstar^m)$.  We borrow extensively their formulas.  More precisely, we use their formulas to describe the maps
$$(H\mathbb{F}_2, \mathcal{A}_*) \to (\Hm_\bigstar, \mathcal{A}_\bigstar^m) \to (\Hm_\bigstar, \Lambda_\bigstar^m)$$
 of Hopf-algebroids.  Then, by filtering these Hopf algebroids compatibly, we produce maps of May spectral sequences: 
$$\begin{tikzcd}
\text{Modified May spectral sequence of } \mathbb{S} \ar[r, Rightarrow] \ar[d] & \text{Ext}_{\mathcal{A}_*}(\mathbb{F}_2, \mathbb{F}_2) \ar[d] \\ 
C_2\text{-equivariant May spectral sequence of }\mathbb{S} \ar[r, Rightarrow] \ar[d] & \text{Ext}_{\mathcal{A}_\bigstar^m}(\Hm_\bigstar, \Hm_\bigstar) \ar[d] \\ 
C_2\text{-equivariant May spectral sequence of } {BP_\mathbb{R}} \ar[r, Rightarrow] &  \text{Ext}_{\Lambda_\bigstar^m}(\Hm_\bigstar, \Hm_\bigstar).
\end{tikzcd}$$
To analyze these maps, we appeal again to Hu and Kriz's formulas.  We compute the maps on the $E_2$-page of the May spectral sequences above, as well as all the differentials in the $C_2$-equivariant May spectral sequence of $BP_\mathbb{R}$.  

The readers should be warned that the May spectral sequence at the top of the diagram above is \textit{not} the classical May spectral sequence.  The classical May spectral sequence is constructed from an increasing filtration of the dual Steenrod algebra $\mathcal{A}_*$.  However, in constructing the equivariant May spectral sequence, we filtered $\mathcal{A}^m_\bigstar$ and $\Lambda^m_\bigstar$ by decreasing filtrations.  To rectify this mismatch of filtrations, we need to change the filtration of $\mathcal{A}_*$ to a decreasing filtration as well.  This is necessary to ensure the compatibility of filtrations with respect to the map $\mathcal{A}_* \to \mathcal{A}^m_\bigstar \to \Lambda^m_\bigstar$ --- or we won't have a map of spectral sequences.  Nevertheless, despite this change of filtration, we are able to compute this modified May spectral sequence.  This computation, together with our knowledge of the $C_2$-equivariant May spectral sequence of $BP_\mathbb{R}$, finishes the proof of Theorem~\ref{INTROthm:algebraicDetection}. \\

While proving Theorem~\ref{INTROthm:algebraicDetection}, we also prove a connection between the equivariant May spectral sequence of $BP_\mathbb{R}$ and the slice spectral sequence of $BP_\mathbb{R}$.

\begin{thm}[Theorem~\ref{thm:AmSliceSS}] \label{INTROthm:AmSliceSS}
The integer-graded $C_2$-equivariant May spectral sequence of $BP_\mathbb{R}$ is isomorphic to the associated-graded slice spectral sequence of $BP_\mathbb{R}$.  
\end{thm}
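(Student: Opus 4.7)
The plan is to construct a page-by-page isomorphism between the two spectral sequences by matching the input algebraic data, the generators, and the differentials.

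First I would identify the $E_1$-page of the $C_2$-equivariant May spectral sequence of $BP_\mathbb{R}$. By construction this May spectral sequence arises from the decreasing filtration on $\Lambda^m_\bigstar$ introduced earlier in the paper, and its associated graded is a primitively generated Hopf algebroid over $\Hm_\bigstar$. Consequently the May $E_1$-page is a polynomial-exterior algebra over $\Hm_\bigstar$ on the indecomposable classes of $\Lambda^m_\bigstar$, and via the Hu--Kriz formulas these classes correspond generator-for-generator to the Hill--Hopkins--Ravenel polynomial generators $\bar r_i$ of $(BP_\mathbb{R})_\bigstar^{C_2}$.

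Next I would identify the starting page on the slice side. The slice $E_2$-page of $BP_\mathbb{R}$ is the polynomial algebra $\pi_\bigstar^{C_2}\HZ[\bar r_1, \bar r_2, \ldots]$. I would introduce on this $E_2$-page the filtration pulled back from the May filtration on $\Lambda^m_\bigstar$ via the natural comparison to the equivariant Adams $E_2$-page, and then pass to the associated graded. Using the explicit Hu--Kriz coaction, the associated-graded slice $E_2$-page should match the May $E_1$-page as a bigraded algebra over $\Hm_\bigstar$, generator by generator.

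It then remains to compare the differentials. The slice differentials of Hill--Hopkins--Ravenel are determined by their values on the powers of $u_{2\sigma}$, which for $BP_\mathbb{R}$ are products of powers of $\as$ with the $\bar r_i$, and multiplicativity determines the rest. The May differentials are the associated graded of the cobar differential on $\Lambda^m_\bigstar$, and can be computed directly from the Hu--Kriz coproduct applied to the indecomposable generators. Under the identification of $E_1$-pages above, I would verify by direct computation on generators that the May differential agrees with the associated-graded slice differential in the correct filtration, and then use multiplicativity and induction on the page number to upgrade this to an isomorphism of spectral sequences.

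The hard part will be reconciling the geometric origin of the slice differentials, coming from the slice tower and Hill--Hopkins--Ravenel's norm-theoretic slice-cell analysis, with the purely algebraic origin of the May differentials, coming from the cobar complex on $\Lambda^m_\bigstar$. The key is to control filtrations carefully so that reducing modulo higher May filtration converts one picture into the other; once this translation is carried out for the indecomposable generators, multiplicativity propagates the isomorphism across the entire spectral sequence.
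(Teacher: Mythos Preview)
Your proposal takes a genuinely different route from the paper. The paper does \emph{not} attempt a direct comparison between the slice tower and the equivariant Adams filtration. Instead it passes through the Borel versions: it establishes the square
\[
\begin{tikzcd}
\EMaySS_{\mathcal{A}^m_*}(BP_\mathbb{R}) \ar[d] \ar[r, leftrightarrow, dashed] & \SliceSS(BP_\mathbb{R}) \ar[d]\\
\EMaySS_{\mathcal{A}^{cc}_*}(BP_\mathbb{R}) \ar[r, leftrightarrow, "\cong"] & \HFPSS(BP_\mathbb{R}),
\end{tikzcd}
\]
proves the bottom isomorphism by identifying both sides with the $a_\sigma$-Bockstein spectral sequence (Theorem~\ref{thm:AccHFPSS}), and then observes that both vertical maps are inclusions on $E_2$ which are isomorphisms on or below the slope-$1$ vanishing line. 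The dashed isomorphism then drops out for free.

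Your direct approach can in principle be pushed through, since both spectral sequences are completely and explicitly computed in the paper (Proposition~\ref{prop:BPRSliceSS} versus Propositions~\ref{prop:genuineMayE2} and~\ref{prop:genuineMayDiff}): one simply matches $w_i \leftrightarrow \bar{v}_i$, $u_\sigma^2 \leftrightarrow u_{2\sigma}$, checks that the differentials $d_{2^{k+1}-1}(u_\sigma^{2^k}) = w_k a_\sigma^{2^{k+1}-1}$ and $d_{2^{k+1}-1}(u_{2\sigma}^{2^{k-1}}) = \bar{v}_k a_\sigma^{2^{k+1}-1}$ agree, and invokes multiplicativity. That is a legitimate, if computational, proof of the statement as written.

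However, your specific plan contains a step that does not make sense as stated. You propose to ``introduce on this $E_2$-page the filtration pulled back from the May filtration on $\Lambda^m_\bigstar$ via the natural comparison to the equivariant Adams $E_2$-page.'' There is no such natural comparison available: there is no map of towers between the slice tower of $BP_\mathbb{R}$ and its $\Hm$-Adams resolution, so there is nothing along which to pull back a filtration. This is precisely the obstacle the paper's argument is designed to avoid --- by applying $F(E{C_2}_+,-)$ one lands in a world where both filtrations become the $a_\sigma$-Bockstein filtration, and the comparison becomes tautological. If you drop that ill-defined intermediate step and simply argue by explicit matching of pages and differentials, your argument goes through; but the paper's route explains \emph{why} the match occurs rather than merely verifying it. (Two small corrections: the May $E_1$-page here is polynomial, not polynomial-exterior; and the relevant generators for $BP_\mathbb{R}$ are the $\bar{v}_i$, not the $\bar{r}_i$.)
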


By the ``associated-graded slice spectral sequence'', we mean that whenever we see a $\mathbb{Z}$-class on the $E_2$-page, we replace it by a tower of $\mathbb{Z}/2$-classes.  Theorem~\ref{INTROthm:AmSliceSS} can be intuitively explained as follows: since the Adams spectral sequence for $BP_\mathbb{R}$ collapses for degree reasons, the equivariant May spectral sequence of $BP_\mathbb{R}$ converges to an associated-graded of $(\pi_\bigstar BP_\mathbb{R})^{\wedge}_2$.  On the other hand, the slice spectral sequence 
$$E_2^{s, V} = \pi_{V-s}^{C_2} P^{\dim V}_{\dim V} BP_\mathbb{R} \Longrightarrow \pi_\bigstar^{C_2} BP_\mathbb{R} $$
also computes the equivariant homotopy groups of $BP_\mathbb{R}$.  Moreover, works of \cite{HuKriz} and \cite{HHRCDM2} essentially show that the $C_2$-slice differentials are produced from equivariant cohomology operations.  Given this, one should naturally suspect the isomorphism in Theorem~\ref{INTROthm:AmSliceSS}.  As we will discuss shortly, Theorem~\ref{INTROthm:AmSliceSS} is crucial in tackling detection theorems for Real Johnson--Wilson theories.  \\

The homotopy groups of the fixed point spectra $(MU^{((C_{2^n}))})^{C_{2^n}}$ can be assembled into the commutative diagram 
$$\begin{tikzcd}
&& \vdots \ar[d]& \\
&&\pi_*(MU^{((C_{2^n}))})^{C_{2^n}} \ar[d] \\
&& \vdots \ar[d] \\ 
\pi_*\mathbb{S} \ar[rr] \ar[rrd] \ar[rrdd] \ar[rruu]&&\pi_*(MU^{((C_8))})^{C_{8}} \ar[d] \\ 
&&\pi_*(MU^{((C_4))})^{C_{4}} \ar[d] \\ 
&&\pi_*(MU_\mathbb{R})^{C_{2}}.
\end{tikzcd}$$

As we move up the tower, more and more elements in the stable homotopy groups of spheres are detected by $\pi_*(MU^{((C_{2^n}))})^{C_{2^n}}$.  For instance, in \cite{HHRKH}, Hill, Hopkins, and Ravenel completely computed the Mackey functor homotopy groups of $K_{[2]}$, the $C_4$-analogue of Atiyah's $K\mathbb{R}$-theory \cite{AtiyahKR}.  The spectrum $K_{[2]}$ is the periodization (localization) of a quotient of $MU^{((C_4))}$, and the $C_4$-action on the underlying spectrum of $K_{[2]}$ is compatible with the $C_4$-action on $E_2$ (the height two Morava $E$-theory spectrum), where $C_4 \subset G_{24} \subset \mathbb{S}_2$.  Here, $\mathbb{S}_2$ is the second Morava stabilizer group, and $G_{24}$ is the maximal finite subgroup, which is of order 24.  Using this, Hill, Hopkins, and Ravenel deduced that $\eta \in \pi_1 \mathbb{S}$, $\nu \in \pi_3\mathbb{S}$, $\epsilon \in \pi_8 \mathbb{S}$, $\kappa \in \pi_{14} \mathbb{S}$, and $\bar{\kappa} \in \pi_{20}\mathbb{S}$ are detected by $\pi_*^{C_4} K_{[2]}$.  Of these elements, $\epsilon$ and $\kappa$ are not detected by $\pi_*^{C_2} MU_\mathbb{R}$.  It is a current project to generalize the techniques developed in this paper to prove detection theorems for the $G$-fixed points of $MU^{((G))}$ for $|G| > 2$. \\

The Doomsday Conjecture claims that for any $s$, there are only finitely many surviving permanent cycles in $\text{Ext}^{s, t}_{\mathcal{A}_*}(\mathbb{F}_2, \mathbb{F}_2)$.  This was proven false by Mahowald in 1977.  In particular, Maholwald exhibited a family of infinitely many surviving permanent cycles on the 2-line of the classical Adams spectral sequence.  In 1995, Minami modified the Doomsday conjecture.
\begin{conj}[New Doomsday Conjecture]
For any $Sq^0$-family $$\{x, Sq^0(x), \ldots, (Sq^0)^n(x), \ldots \}$$ in $\Ext_{A_*}(\mathbb{F}_2, \mathbb{F}_2)$, only finitely many elements survive to the $E_\infty$-page of the classical Adams spectral sequence. 
\end{conj}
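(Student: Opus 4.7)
The plan is to reduce the New Doomsday Conjecture, at least for those $Sq^0$-families detected by the equivariant theories studied in this paper, to a vanishing statement in equivariant homotopy. A surviving permanent cycle $x$ on the $E_\infty$-page of the classical Adams spectral sequence represents a stable homotopy class, and by Theorem~\ref{INTROthm:MURBPRDetection} together with Corollary~\ref{INTROcor:DetectionMUG}, its Hurewicz image in $\pi_*^G MU^{((G))}$ is nonzero whenever $x$ lies in one of the three named families. Thus, if for a fixed $Sq^0$-family one can show that the corresponding bidegrees in $\pi_*^G MU^{((G))}$ vanish for all but finitely many iterates of $Sq^0$, then automatically only finitely many elements of the family can survive.

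First, I would record the $Sq^0$-action on $\Ext_{\mathcal{A}_*}(\F,\F)$ and transport it across the maps of Adams spectral sequences constructed in the introduction, identifying the image of each $Sq^0$-orbit in $\Ext_{\Lambda_\bigstar^m}(\Hm_\bigstar, \Hm_\bigstar)$. This identification is essentially built into the proof of Theorem~\ref{INTROthm:algebraicDetection}, where the images of $h_i$, $h_j^2$, and $g_k$ are explicitly computed on the $E_2$-page of the $C_2$-equivariant May spectral sequence of $BP_\mathbb{R}$; since $Sq^0$ is multiplicative with $Sq^0 h_i = h_{i+1}$ and $Sq^0 g_k = g_{k+1}$, each such orbit traces out a ray in the $(s,t)$-plane whose slope is fixed by the family.

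Next, using Theorem~\ref{INTROthm:AmSliceSS} to replace the $C_2$-equivariant May spectral sequence of $BP_\mathbb{R}$ by the associated-graded slice spectral sequence, I would invoke the slice filtration vanishing lines of Hill--Hopkins--Ravenel to argue that the groups $\pi_{V-s}^{C_2} P^{\dim V}_{\dim V} BP_\mathbb{R}$ vanish along such a ray beyond a threshold determined by the family. Strong convergence of the slice spectral sequence would then force $\pi_\bigstar^{C_2} BP_\mathbb{R}$ itself to vanish in those bidegrees, so only finitely many members of the family can have nonzero Hurewicz image and, by detection, only finitely many can survive the classical Adams spectral sequence.

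The hard part will be this last step: the vanishing lines available for the $BP_\mathbb{R}$ slice spectral sequence are not fine enough to eliminate arbitrary Ext classes, since the families $\{h_i\}$, $\{h_j^2\}$, and $\{g_k\}$ live on low-filtration rays that \emph{do} persist to $\pi_\bigstar^{C_2} BP_\mathbb{R}$. To complete the argument one would likely have to pass to the $C_{2^n}$-fixed points of $MU^{((C_{2^n}))}$ for larger $n$ and import an equivariant analogue of the periodicity/nilpotence input that Hill--Hopkins--Ravenel used to kill $h_j^2$ for $j\ge 7$, together with a classification of all $Sq^0$-families in $\Ext_{\mathcal{A}_*}(\F,\F)$. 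A full proof for \emph{every} $Sq^0$-family appears to require a uniform vanishing theorem across these higher-height equivariant theories, which is beyond what the techniques developed here provide on their own.
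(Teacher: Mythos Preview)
The statement you are attempting to prove is labeled \texttt{conj} in the paper, not \texttt{thm}: it is Minami's New Doomsday Conjecture, recorded there as an \emph{open problem}. The paper offers no proof of it, nor does it claim one; the conjecture is mentioned only as motivation for studying the fate of the $\bar{\kappa}$-family in $\pi_*^G MU^{((G))}$ as $|G|$ grows. There is therefore no ``paper's own proof'' to compare against.

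Beyond that bibliographic point, your strategy has an internal tension that you partially recognize but do not fully confront. The detection theorems (Theorem~\ref{INTROthm:MURBPRDetection} and Corollary~\ref{INTROcor:DetectionMUG}) say that the relevant target groups in $\pi_*^{C_2} BP_\mathbb{R}$ are \emph{nonzero} in the bidegrees of $h_i$, $h_j^2$, and $g_k$ --- that is precisely what ``detected'' means. Your proposed mechanism is to show those same target groups vanish for all but finitely many iterates of $Sq^0$; for the three named families this is simply false at the $C_2$ level, as you note yourself. Passing to larger $C_{2^n}$ does not help automatically: Corollary~\ref{INTROcor:DetectionMUG} shows detection \emph{persists} up the tower, so the targets remain nonzero there too. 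What Hill--Hopkins--Ravenel exploit for $\{h_j^2\}$ is not a vanishing of the target of the Hurewicz map but rather a periodicity theorem for a specific localization $\Omega_{\mathbb{O}}$ that forces $\pi_{2^{j+1}-2}\Omega = 0$; no analogue of that input is available for a general $Sq^0$-family, and producing one would amount to proving the conjecture. Your closing paragraph is the honest assessment: the techniques of this paper do not yield a proof, and indeed the conjecture remains open.
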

Here, $Sq^0(-)$ is the Steenrod action defined on the Adams $E_2$-page (see \cite{BrunerMayHinfty}).  In particular, the families $\{h_i \,|\, i \geq1\}$, $\{h_j^2 \,|\, j \geq 1 \}$, and $\{g_k \,|\, k \geq 1 \}$ are all $Sq^0$-families on the 1-line, 2-line, and 4-line of the classical Adams spectral sequence, respectively.  We are interested in the fate of the $\bar{\kappa}$-family $\{g_k \, |\, k \geq 1 \}$ in $\pi^G_* MU^{((G))}$ as we increase the order of $G$.  As $G$ grows bigger, it's possible that $g_k$ will all support differentials in the slice spectral sequence of $\pi^G_* MU^{((G))}$ for $k$ large enough, hence not surviving the classical Adams spectral sequence.\\  

In \cite{HHRKervaire}, Hill, Hopkins, and Ravenel also used an algebraic detection theorem to prove that the Kervaire classes are detected by $\pi_*^{C_8}\Omega_\mathbb{O}$.  They remarked that their algebraic detection theorem can be modified to prove that the $C_{2^n}$-fixed points of $MU^{((C_{2^n}))}$, for $n \geq 3$, detect the Kervaire classes.  It's worth pointing out the differences between our algebraic detection theorem and their algebraic detection theorem.  To prove their detection theorem, Hill, Hopkins, and Ravenel used the map of spectral sequences 
$$\begin{tikzcd} 
\text{Adams--Novikov spectral sequence} \ar[r, Rightarrow]  \ar[d] & \pi_* \mathbb{S} \ar[d] \\ 
C_{2^n}\text{-homotopy fixed point spectral sequence} \ar[r, Rightarrow] & \pi_* (MU^{((C_{2^n}))})^{C_{2^n}}.
\end{tikzcd}$$
Their algebraic detection theorem \cite[Theorem~11.2]{HHRKervaire} shows that if $x \in \text{Ext}_{MU_*MU}^{2, 2^{j+1}} (MU_*, MU_*)$ is any element mapping to $h_j^2$ on the $E_2$-page of the classical Adams spectral sequence, then the image of $x$ in $H^2(C_{2^n}, \pi_{2^{j+1}}^u MU^{((C_{2^n}))})$ is not zero.  Once this is proved, their detection theorem follows easily.  

They further remarked that their algebraic detection theorem does not hold when $G$ is $C_2$ or $C_4$ (see \cite[Remark~11.14]{HHRKervaire}).  For these groups, there is a jump of filtration.  In particular, for $n = 1$, the element $x \in \text{Ext}_{MU_*MU}^{2, 2^{j+1}}(MU_*, MU_*)$ maps to 0 on the $E_2$-page of the $C_2$-homotopy fixed point spectral sequence of $MU_\mathbb{R}$.  However, because of Theorem~\ref{INTROthm:AmSliceSS}, we deduce that there must be a nontrivial extension so that $x$ actually corresponds to an element of filtration $2^{j+1}-2$ in the $C_2$-homotopy fixed point spectral sequence.  For our algebraic detection theorem, this jump of filtration does not occur because we used maps of Adams spectral sequences. \\

As an application of Theorem~\ref{INTROthm:MURBPRDetection}, we study Hurewicz images of Real Johnson--Wilson theories.  The Real Johnson--Wilson theories $E\mathbb{R}(n)$ were first constructed and studied by Hu and Kriz \cite{HuKriz}.  They constructed $E\mathbb{R}(n)$ from $BP_\mathbb{R}$ by mimicking the classical construction of $E(n)$.  More precisely, there is an isomorphism
$$\mathbb{Z}[v_1, v_2, \ldots] = \pi_{2*} BP \cong \pi_{*\rho_2}^{C_2} BP_\mathbb{R} = \mathbb{Z} [\bar{v}_1, \bar{v}_2, \ldots],$$
where $\bar{v}_i \in \pi_{i\rho_2}^{C_2} BP_\mathbb{R}$ are lifts of the classical generators $v_i \in \pi_{2i} BP$.  Quotienting out the $\bar{v}_i$ generators for all $i \geq n + 1$ and inverting $\bar{v}_n$ produces the Real Johnson--Wilson theory $E\mathbb{R}(n)$.  It is a $C_2$-equivariant spectrum whose underlying spectrum is $E(n)$, with an $C_2$-action induced from the complex conjugation action of $BP_\mathbb{R}$. 

Many people have also studied $E\mathbb{R}(n)$ after Hu and Kriz.  Kitchloo and Wilson \cite{KitchlooWilsonFibration} proved that the fixed points $ER(n) := E\mathbb{R}(n)^{C_2}$ fits into the fiber sequence
$$\Sigma^{\lambda(n)} ER(n) \to ER(n) \to E(n),$$
where $\lambda(n) = 2^{2n+1} - 2^{n+2} + 1$.  When $n = 1$, $E\mathbb{R}(1)$ is Atiyah's Real $K$-theory $K\mathbb{R}$, with $ER(1) = K\mathbb{R}^{C_2} = KO$.  In this case, Kitchloo and Wilson's fibration recovers the classical fibration 
$$\Sigma KO \stackrel{\eta}{\to} KO \to KU.$$
When $n=2$, using the Bockstein spectral sequence associated to the fibration, Kitchloo and Wilson subsequently computed the cohomology groups $ER(2)^*(\mathbb{R}P^n)$ and $ER(2)^*(\mathbb{R}P^n \wedge \mathbb{R}P^m)$.  From their computation, they deduced new nonimmersion results for even dimensional real projective spaces \cite{KitchlooWilsonImmersion1, KitchlooWilsonImmersion2}.  Most recently, Kitchloo, Lorman, and Wilson have used this Bockstein spectral sequence to further compute the $ER(n)$ cohomology of other spaces as well \cite{Lorman, KitchlooLormanWilson1, KitchlooLormanWilson2}. 

In \cite{HillMeier}, Hill and Meier studied the spectra $TMF_1(3)$ and $Tmf_1(3)$ of topological modular forms at level three.  They proved that the spectrum $tmf_1(3)$, considered as an $C_2$-equivariant spectrum, is a form of $BP_\mathbb{R}\langle 2\rangle $, and $tmf_1(3)[\bar{a}_3^{-1}]$ is a form of $E\mathbb{R}(2)$.  Using this identification, they computed the $C_2$-equivariant Picard groups and the $C_2$-equivariant Anderson dual of $Tmf_1(3)$.  

We are interested in the Hurewicz images of $\pi_* E\mathbb{R}(n)^{C_2}$.  To do so, we study the map of slice spectral sequences
$$\SliceSS(BP_\mathbb{R}) \to \SliceSS(E\mathbb{R}(n)).$$
Theorem~\ref{INTROthm:algebraicDetection} and Theorem~\ref{INTROthm:AmSliceSS} identify the classes in the slice spectral sequence of $BP_\mathbb{R}$ that detect the families $\{h_i\}$, $\{h_j^2\}$, and $\{g_k\}$.  Analyzing the images of these classes in the slice spectral sequence of $E\mathbb{R}(n)$ produces the detection theorem for $E\mathbb{R}(n)$.

\begin{thm}[Detection Theorem for $E\mathbb{R}(n)$] \hfill\label{INTROthm:ER(n)Detection}
\begin{enumerate}
\item For $1 \leq i, j \leq n$, if the element $h_{i} \in \Ext_{\mathcal{A}_*}^{1, 2^i}(\F, \F)$ or $h_{j}^2 \in \Ext_{\mathcal{A}_*}^{2, 2^{j+1}}(\F, \F)$ survives to the $E_\infty$-page of the Adams spectral sequence, then its image under the Hurewicz map $\pi_*\mathbb{S} \to \pi_* E\mathbb{R}(n)^{C_2}$ is nonzero.  
\item For $1 \leq k \leq n-1$, if the element $g_k \in \Ext_{\mathcal{A}_*}^{4, 2^{k+2}+2^{k+3}}(\F, \F)$ survives to the $E_\infty$-page of the Adams spectral sequence, then its image under the Hurewicz map $\pi_*\mathbb{S} \to \pi_* E\mathbb{R}(n)^{C_2}$ is nonzero. 
\end{enumerate}
\end{thm}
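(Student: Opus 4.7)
The plan is to pass detection from $BP_\mathbb{R}$ to $E\mathbb{R}(n)$ along the quotient-and-invert map $BP_\mathbb{R} \to E\mathbb{R}(n)$, using the induced map of slice spectral sequences $\SliceSS(BP_\mathbb{R}) \to \SliceSS(E\mathbb{R}(n))$. Suppose $x$ is one of $h_i$, $h_j^2$, or $g_k$ in the indicated range and assume it survives the classical Adams spectral sequence to represent a class $\alpha\in\pi_*\mathbb{S}$. By Theorem~\ref{INTROthm:MURBPRDetection} the image of $\alpha$ in $\pi_\bigstar^{C_2} BP_\mathbb{R}$ is nonzero; let $\widetilde{x}$ denote a nonzero slice permanent cycle in $\SliceSS(BP_\mathbb{R})$ that detects it. It suffices to show that the image of $\widetilde x$ in $\SliceSS(E\mathbb{R}(n))$ survives to a nonzero class on the $E_\infty$-page, since that spectral sequence converges to $\pi_\bigstar E\mathbb{R}(n)^{C_2}$ and the image of $\alpha$ there would then necessarily be nonzero.

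First, I would make $\widetilde x$ explicit. Combining Theorems~\ref{INTROthm:algebraicDetection} and~\ref{INTROthm:AmSliceSS} with the May--slice dictionary produces a distinguished polynomial representative in the slice generators $\bar{v}_\ell$ together with the equivariant classes $\as$ and $\us$. I expect the leading term of $\widetilde{h_i}$ to involve $\bar{v}_i$, that of $\widetilde{h_j^2}$ to involve $\bar{v}_j$, and that of $\widetilde{g_k}$ to involve both $\bar{v}_k$ and $\bar{v}_{k+1}$. Under $BP_\mathbb{R}\to E\mathbb{R}(n)$ the classes $\bar{v}_\ell$ with $\ell\ge n+1$ map to zero while $\bar{v}_n$ is inverted, so the numerical constraints $i\le n$, $j\le n$, and $k\le n-1$ are exactly what guarantees that each detector has a nonzero leading term, and is therefore nonzero on the $E_2$-page of $\SliceSS(E\mathbb{R}(n))$.

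The main obstacle is then to verify that these nonzero $E_2$-images are not killed by differentials in $\SliceSS(E\mathbb{R}(n))$. Because the image of a permanent cycle under a morphism of spectral sequences is again a permanent cycle, $\widetilde x$ cannot itself support a new differential, so only incoming differentials landing in its bidegree need to be ruled out. The plan is to invoke the Hill--Hopkins--Ravenel slice differential theorem together with the principle that every differential in $\SliceSS(E\mathbb{R}(n))$ is inherited from the $\as$-multiplication and the $\bar{v}_\ell$-attaching maps in $\SliceSS(BP_\mathbb{R})$, modulo the identifications $\bar{v}_\ell=0$ for $\ell\ge n+1$ and the localization at $\bar{v}_n$. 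One then performs a bidegree check, enumerating the classes of $\SliceSS(E\mathbb{R}(n))$ that could support a differential landing on $\widetilde x$ and ruling them out by a sparsity argument, with $\bar{v}_n$-periodicity cutting the check down to finitely many stems. The hardest case will be $g_k$, where the detector sits in filtration $4$ and there are more candidate sources than in filtrations $1$ or $2$; here the hypothesis $k\le n-1$, which keeps both $\bar{v}_k$ and $\bar{v}_{k+1}$ alive in $E\mathbb{R}(n)$, is what makes the slice differential analysis parallel to the $BP_\mathbb{R}$ computation and allows the nonvanishing to be transferred.
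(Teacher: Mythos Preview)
Your approach is essentially the same as the paper's: map the explicit slice detectors from $\SliceSS(BP_\mathbb{R})$ into $\SliceSS(E\mathbb{R}(n))$ and check they survive there. Two points of comparison are worth making.

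First, the paper does not leave the detectors vague. Proposition~\ref{prop:mayImages} gives them exactly: $h_i\mapsto \bar v_i\,\as^{2^i-1}$, $h_j^2\mapsto \bar v_j^2\,\as^{2(2^j-1)}$, and $g_k\mapsto \bar v_{k+1}^4\,\us^{2^{k+2}}\,\as^{4(2^k-1)}$. Note in particular that the $g_k$ detector involves only $\bar v_{k+1}$, not $\bar v_k$; your expectation that it involves both is off, though your conclusion that $k\le n-1$ is exactly what keeps $\bar v_{k+1}$ alive is correct.

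Second, the survival check you describe as ``the main obstacle'' --- a bidegree enumeration and sparsity argument, reduced to finitely many stems by $\bar v_n$-periodicity --- is more elaborate than what is actually needed. The paper instead proves a short lemma (Lemma~\ref{lem:ER(n)SliceSS}) giving a complete description of all differentials in $\SliceSS(E\mathbb{R}(n))$: the only differentials are $d_{2^{\ell+1}-1}(u_{2\sigma}^{2^{\ell-1}})=\bar v_\ell\,\as^{2^{\ell+1}-1}$ for $1\le \ell\le n$, and $u_{2\sigma}^{2^n}$ is a permanent cycle. Once you know every differential, checking that the three detectors survive is a direct inspection (the same computation already done for $BP_\mathbb{R}$ in Proposition~\ref{prop:SliceSSsurvive}), with no case-by-case source enumeration required. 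Your plan would work, but the global description of the differentials is both shorter and more robust.
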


Theorem~\ref{INTROthm:ER(n)Detection} is extremely useful for computing  $ER(n)^*(\mathbb{RP}^m)$.  In \cite{LiShiWangXu}, we use the fact that the Hopf elements are detected by $\pi_*^{C_2} E\mathbb{R}(n)$ to deduce the compatibility of the slice differentials of $E\mathbb{R}(n)$ and the attaching maps of $\mathbb{RP}^m$.  As a result, we are able to compute $ER(2)^*(\mathbb{RP}^m)$ by a double filtration spectral sequence, solving all the $2$-extensions and some $\eta$ and $\nu$-extensions.  

Hahn and the second author have shown that the Lubin--Tate theories $E_n$, equipped with the Goerss--Hopkins--Miller $C_2$-action (\cite{HopkinsMiller, GoerssHopkins}), is Real oriented.  In other words, there is a $C_2$-equivariant map $MU_\mathbb{R} \to E_n$.  The proof for Theorem~\ref{INTROthm:ER(n)Detection} can be modified to prove Hurewicz images for the homotopy fixed point spectra $E_n^{hG}$.  In \cite{HahnShi}, the authors show that the Hurewicz images of $E\mathbb{R}(n)^{C_2}$ and $E_n^{hC_2}$ are the same.  It follows that Theorem~\ref{INTROthm:ER(n)Detection} holds for $\pi_*E_n^{hC_2}$ as well. 

\subsection{Summary of the contents} In Section~\ref{dualSteenrod}, we provide the necessary background for the $C_2$-equivariant dual Steenrod algebras --- $(\Hm_\bigstar, \mathcal{A}^m_\bigstar)$ and $(H^c_\bigstar, \mathcal{A}^{cc}_\bigstar)$ --- and their $C_2$-equivariant Adams spectral sequences.  In Section~\ref{sec:SliceSSHFPSSBPR}, we compute the slice spectral sequence and the homotopy fixed point spectral sequence of $BP_\mathbb{R}$.  In Section~\ref{sec:EquivMaySS}, we construct the equivariant May spectral sequence of $BP_\mathbb{R}$ and prove Theorem~\ref{INTROthm:AmSliceSS}.  In Section~\ref{sec:MaySSMay}, we modify the filtration of the classical dual Steenrod algebra $\mathcal{A}_*$ to obtain a compatible filtration with respect to the map $\mathcal{A}_* \to \mathcal{A}^m_\bigstar \to \mathcal{A}^{cc}_\bigstar$ of Steenrod algebras.  We then analyze the resulting maps of May spectral sequences.  Lastly, in Section~\ref{sec:DetectionTheorem}, we combine results from the previous sections and prove Theorem~\ref{INTROthm:MURBPRDetection}, Corollary~\ref{INTROcor:DetectionMUG}, Theorem~\ref{INTROthm:algebraicDetection}, and Theorem~\ref{INTROthm:ER(n)Detection}.  

\subsection{Acknowledgements}
The authors would like to thank the organizers of the 2016 Talbot workshop, Eva Belmont, Inbar Klang, and Dylan Wilson, for inviting them to the workshop.  This project would not have come into being without the mentorship of Mike Hill and Doug Ravenel during the workshop.  We would like to thank Vitaly Lorman for helpful conversations and a fruitful exchange of ideas.  We are also grateful to Hood Chatham for his spectral sequence package, which produced all of our diagrams.  Thanks are also due to Mark Behrens, Jeremy Hahn, Achim Krause, Peter May, Haynes Miller, Eric Peterson, Doug Ravenel, David B Rush, and Mingcong Zeng for helpful conversations.  Finally, we would like to heartily thank Mike Hill and Mike Hopkins for sharing numerous insights with us during various stages of the project and many helpful conversations.  The fourth author was partially supported by the National Science Foundation under Grant No. DMS-1810638.

\section{The Equivariant Dual Steenrod Algebra and Adams Spectral Sequence} \label{dualSteenrod}

In this section, we provide the necessary background for the $C_2$-equivariant dual Steenrod algebra and the $C_2$-equivariant Adams spectral sequence.  These have been extensively studied by Hu--Kriz \cite{HuKriz} and Greenlees \cite{GreenleesThesis, Greenlees1988, Greenlees1990}.  Of the many ways to define the $C_2$-equivariant dual Steenrod algebra, two of them are of interest to us.  The first one is the \textbf{Borel equivariant dual Steenrod algebra} 
$$\mathcal{A}^{cc}_\bigstar := F(E{C_2}_+, H\F \wedge H\F)_\bigstar.$$
This has been studied by Greenlees.  The second one is the \textbf{genuine equivariant dual Steenrod algebra}.  It is defined by using the genuine Eilenberg--Mac Lane spectrum $\Hm$:
$$\mathcal{A}^m_\bigstar := (\Hm \wedge \Hm)_\bigstar.$$


\subsection{$\mathcal{A}_\bigstar^{cc}$ and $\mathcal{A}_\bigstar^m$}

To compute $\mathcal{A}^m_\bigstar$, Hu and Kriz first computed the $RO(C_2)$-graded homotopy groups $\mathcal{A}^{cc}_\bigstar$.  This computation can be further used to deduce the $RO(C_2)$-graded homotopy groups $\mathcal{A}^m_\bigstar$.  We give a brief summary of Hu and Kriz's computation of $\mathcal{A}_\bigstar^{cc}$ and $\mathcal{A}_\bigstar^m$, focusing on the parts that we will need again for the later sections.  For more details of their computation, see Section 6 of \cite{HuKriz}. \\


To start, we need the coefficient rings of the $C_2$-equivariant Eilenberg--Mac Lane spectra $H^c := F(E{C_2}_+, H\mathbb{F}_2)$ and $H^m := \Hm$.  The following are some distinguished elements in their $RO(C_2)$-graded homotopy groups. 

\begin{df}\rm
The element 
$$a_\sigma \in \pi_{-\sigma}^{C_2} S^0$$ 
is the element corresponding to the inclusion $S^0 \hookrightarrow S^\sigma$ (the one point compactification of the inclusion $\{0\} \subset \sigma$) under the suspension isomorphism $\pi_{-\sigma}^{C_2} S^0 \cong \pi_0^{C_2} S^{\sigma}$.  Under the Hurewicz maps $\pi^{C_2}_\bigstar \mathbb{S} \to H^c_\bigstar$ and $\pi^{C_2}_\bigstar \mathbb{S} \to H^m_\bigstar$, the images of $a_\sigma$ are nonzero.  By an abuse of notation, we will denote the images by $a_\sigma$ as well.  
\end{df}

\begin{df}\rm
The element 
$$u_\sigma \in \pi_{1 - \sigma}^{C_2} \Hm$$ 
is the element corresponding to the generator of $H_1^{C_2} (S^\sigma ;\underline{\mathbb{F}_2})=\pi_1^{C_2} (S^\sigma \wedge \Hm)$.  It can also be regarded as an element in $\pi_{1 -\sigma}^{C_2} F(E{C_2}_+, H\mathbb{F}_2)$ via the map 
$$\Hm \to F(E{C_2}_+, \Hm) \simeq F(E{C_2}_+, H\mathbb{F}_2).$$
\end{df}

Hu and Kriz first computed $H^c_\bigstar$.  They then used it to analyze the cofiber of the map $$\Hm \to F(E{C_2}_+, \Hm) \simeq F(E{C_2}_+, H\mathbb{F}_2)$$ and subsequently computed the coefficient ring $\Hm_\bigstar$. 

\begin{prop}[Hu--Kriz] \hfill
\begin{enumerate}
\item The coefficient ring $H^c_\bigstar = F(E{C_2}_+, H\mathbb{F}_2)_\bigstar$ is the polynomial algebra 
$$F(E{C_2}_+, H\mathbb{F}_2)_{p + q\sigma} = \mathbb{F}_2[u_\sigma^\pm, a_\sigma].$$

\item The coefficient ring $H^m_\bigstar := \Hm_\bigstar$ is 
$$\Hm_{p + q\sigma} = \mathbb{F}_2 [u_\sigma, a_\sigma] \oplus \mathbb{F}_2\left\{ \frac{\theta}{u_\sigma^i a_\sigma^j} \right\}, \, \, \, \, \, i, j \geq 0,$$
where $\theta$ is an element in $\pi_{2\sigma-2}^{C_2} \Hm$.  The element $\theta$ is infinitely $u_\sigma$ and $a_\sigma$-divisible.  It is also $u_\sigma$ and $a_\sigma$-torsion.  The product of any two elements $x, y \in \mathbb{F}_2\left\{ \frac{\theta}{u_\sigma^i a_\sigma^j} \right\}$ is 0.  
\end{enumerate}
In particular, the map $H^m_{p+q\sigma} \to H^c_{p+q\sigma}$ is an isomorphism in the range $a \geq 0$.  Figure~\ref{fig:HcCoeffHmCoeff} shows $\Hm_{p+q\sigma}$ and $F(E{C_2}_+, H\mathbb{F}_2)_{p+q\sigma}$.
\end{prop}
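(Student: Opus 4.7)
The plan is to compute $H^c_\bigstar$ directly via a free-cell skeletal analysis, and then extract $\Hm_\bigstar$ by combining this computation with an isotropy separation fiber sequence.

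For part (1), recognize $F(EC_{2+}, H\F)^{C_2}$ as the Borel $C_2$-equivariant cohomology of a point with $\F$-coefficients. The filtration $EC_2 = S(\infty\sigma) = \operatorname{colim}_n S(n\sigma)$ by free $C_2$-CW skeleta yields a conditionally convergent spectral sequence whose $RO(C_2)$-graded $E_2$-page is a polynomial algebra generated by the Euler class $\as$ (represented by the unit $S^0 \hookrightarrow S^\sigma$) and the orientation class $\us$. The class $\us$ is invertible in $H^c_\bigstar$ because its underlying nonequivariant image is a unit in $\pi_* H\F$, and sparsity of the $E_2$-page forces collapse; this produces $H^c_\bigstar = \F[\us^{\pm 1}, \as]$.

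For part (2), apply $F(-, \Hm)$ to the cofiber sequence $EC_{2+} \to S^0 \to \widetilde{EC_2}$ to obtain the fiber sequence
$$F(\widetilde{EC_2}, \Hm) \longrightarrow \Hm \longrightarrow F(EC_{2+}, \Hm).$$
The rightmost term equals $H^c$, since $F(EC_{2+}, -)$ depends only on the underlying nonequivariant spectrum, and both $\Hm$ and $H\F$ share the same underlying spectrum. The leftmost term is computed by writing $\widetilde{EC_2} = S^{\infty\sigma} = \operatorname{colim}_n S^{n\sigma}$; its $RO(C_2)$-graded homotopy is the $\as$-inverted ring $\Hm_\bigstar[\as^{-1}]$. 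Taking $RO(C_2)$-graded homotopy of the fiber sequence yields a long exact sequence that splits $\Hm_\bigstar$ additively into a positive cone $\F[\us, \as]$, which maps isomorphically onto the positive cone of $H^c_\bigstar$ and thereby establishes the final isomorphism claim of the proposition, and a negative cone $\F\{\theta / \us^i \as^j\}$ arising from the kernel via the connecting homomorphism. The class $\theta \in \pi_{2\sigma-2}^{C_2}\Hm$ is defined as the image under this boundary of a suitable element of $\Hm_\bigstar[\as^{-1}]$, and the tower of divisors $\theta / \us^i \as^j$ is produced by iterating the boundary against increasing powers of $\us$ and $\as$.

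The main obstacle will be the multiplicative structure of the negative cone: one must verify that each $\theta / \us^i \as^j$ is both $\us$- and $\as$-torsion, and that products of any two negative cone classes vanish. Both assertions follow from the observation that the negative cone lies in $\ker(\Hm_\bigstar \to H^c_\bigstar)$. Any product of two kernel classes is itself in the kernel; but in the relevant total degrees the kernel consists only of scalar multiples of individual $\theta / \us^i \as^j$-classes, and a direct inspection exploiting the factorization of the multiplication through $\widetilde{EC_2} \wedge \widetilde{EC_2} \simeq \widetilde{EC_2}$ shows that no nonzero output is possible. The $\us$- and $\as$-torsion is deduced by comparing the multiplication maps on $\Hm_\bigstar$ and $\Hm_\bigstar[\as^{-1}]$ through the connecting homomorphism. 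With the additive and multiplicative structures in hand, the claimed isomorphism on the positive cone drops out immediately.
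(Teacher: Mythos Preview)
The paper does not supply its own proof here; the result is attributed to Hu--Kriz, with the text preceding the proposition and Remark~\ref{rem:theta} sketching the approach. That sketch agrees in spirit with yours: compute $H^c_\bigstar$ first, then deduce $\Hm_\bigstar$ via an isotropy-separation argument. However, your execution of the second step contains a genuine error.

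You identify the fiber of $\Hm \to F(EC_{2+},\Hm)$ as $F(\widetilde{EC_2},\Hm)$ and then assert that its homotopy is the $\as$-inverted ring $\Hm_\bigstar[\as^{-1}]$. This is wrong on two counts. First, $\as$-inversion describes $\widetilde{EC_2}\wedge\Hm$, not $F(\widetilde{EC_2},\Hm)$; since $\widetilde{EC_2}=\operatorname{colim}_n S^{n\sigma}$, mapping out of it gives an inverse limit $\lim_n \Sigma^{-n\sigma}\Hm$, not a colimit. Second, even if the identification were correct, it would be circular: you would be invoking $\Hm_\bigstar$ in order to compute $\Hm_\bigstar$. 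The route the paper indicates (and that Hu--Kriz actually take) avoids this by using the full Tate diagram of Remark~\ref{rem:theta}: one first computes $H^f_\bigstar$, $H^c_\bigstar$, and $H^t_\bigstar$ directly---none of these requires prior knowledge of $\Hm_\bigstar$---and then reads off $\Hm_\bigstar$ (and $H^g_\bigstar$) from the long exact sequences and the comparison map between the two rows. The class $\theta$ is then defined as the image of $\us^{-1}\as^{-1}\in H^t_\bigstar$ under the boundary $\partial: H^t_\bigstar \to H^f_{\bigstar-1} \to H^m_{\bigstar-1}$, exactly as in the remark. Replacing your fiber-sequence step with this Tate-square argument repairs the gap; the rest of your outline (including the multiplicative analysis of the negative cone) is reasonable.
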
 


\begin{figure}
\begin{center}
\includegraphics[trim={1cm 15cm 11cm 4cm},clip, scale = 0.65]{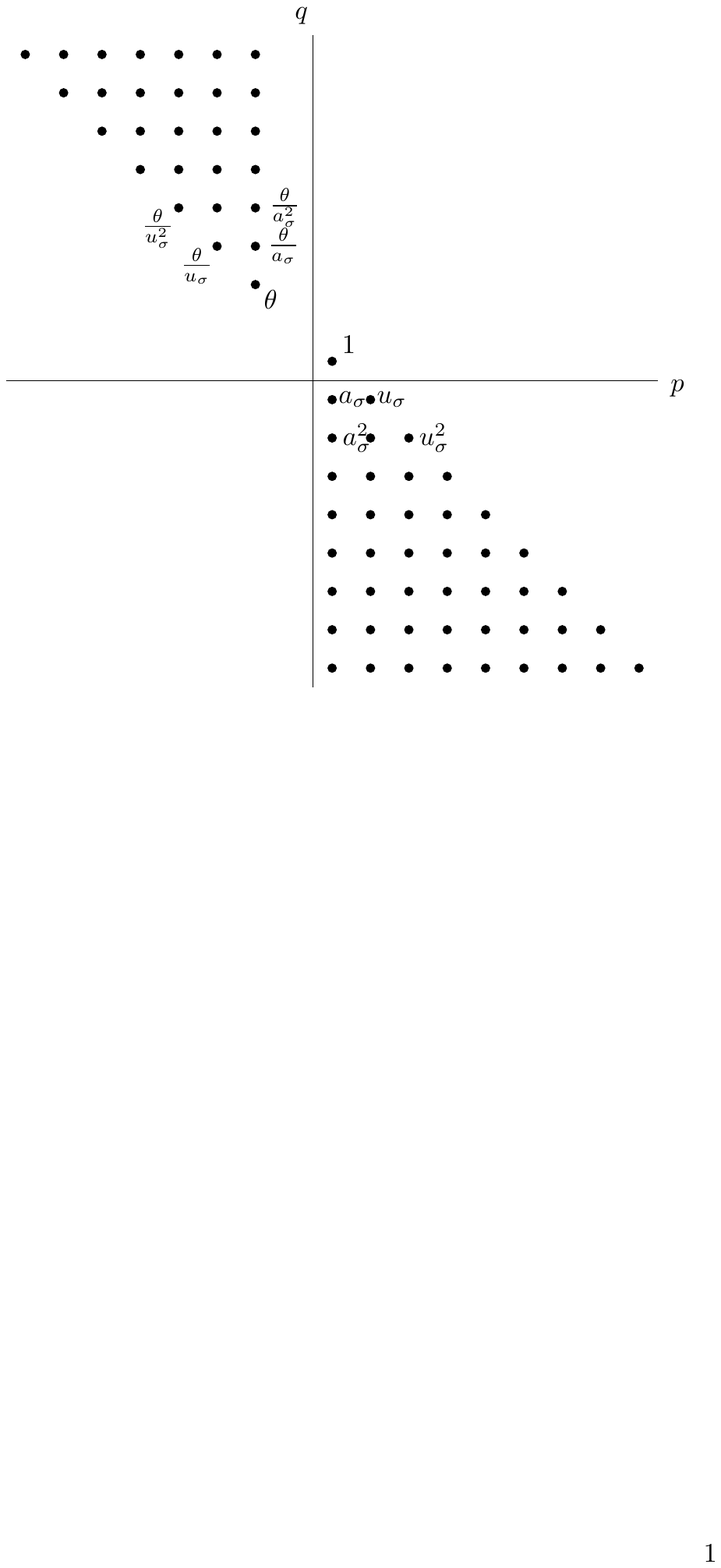}
\includegraphics[trim={1cm 15cm 11cm 4cm},clip, scale = 0.65]{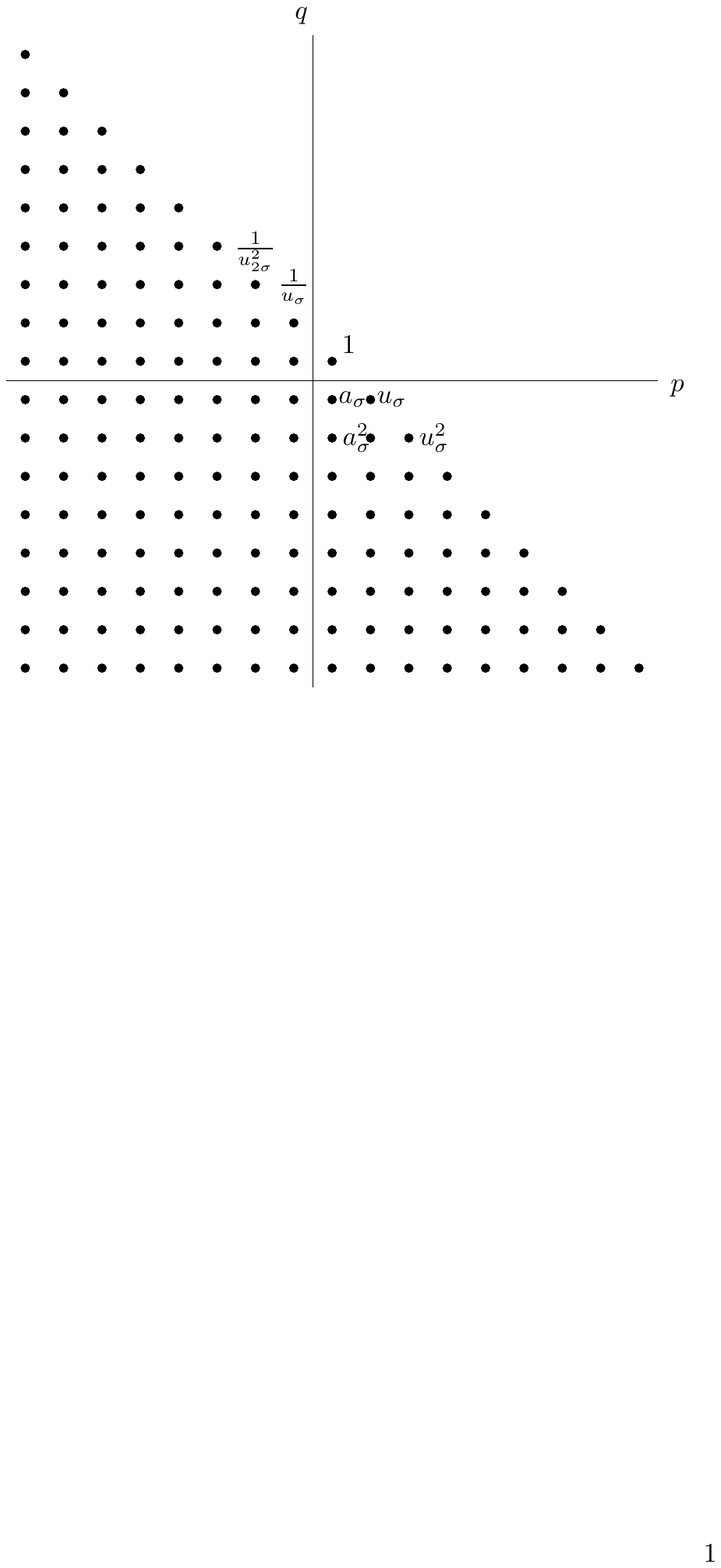}
\end{center}
\caption{The coefficient rings of $\Hm_{p+q\sigma}$ (left) and $F(E{C_2}_+, H\mathbb{F}_2)_{p+q\sigma}$ (right).  The map $\Hm_{p+q\sigma} \longrightarrow F(E{C_2}_+, H\mathbb{F}_2)_{p+q\sigma}$ induces an isomorphism in the range $p \geq 0$.}
\label{fig:HcCoeffHmCoeff}
\end{figure}

\begin{rem}\rm
In \cite{HuKriz}, Hu and Kriz denoted $u_\sigma$ by $\sigma^{-1}$ and $a_\sigma$ by $a$.  
\end{rem}

\begin{rem}\rm \label{rem:theta}
The element $\theta \in H_\bigstar^m$ can be defined as follows: consider the Tate diagram 
$$\begin{tikzcd}
E{C_2}_+ \wedge \Hm \ar[r] \ar[d, "\simeq"] &\Hm  \ar[r] \ar[d] & \widetilde{E{C_2}} \wedge \Hm \ar[d] \\ 
E{C_2}_+ \wedge F(E{C_2}_+, \Hm) \ar[r] & F(E{C_2}_+, \Hm) \ar[r] & \widetilde{EC_2}_+ \wedge F(E{C_2}_+, \Hm).
\end{tikzcd}$$
Taking $\pi_\bigstar^{C_2}(-)$ produces the following diagram on homotopy groups
$$\begin{tikzcd}
H_\bigstar^f \ar[r] \ar[d, "\cong"] & H_\bigstar^m \ar[r] \ar[d] & H^g_\bigstar \ar[d] \label{diagram:Tate}\\ 
H_\bigstar^f \ar[r] & H_\bigstar^c \ar[r] & H^t_\bigstar.
\end{tikzcd}$$
The coefficient rings of $H^t_\bigstar$ and $H^f_\bigstar$ can be computed to be
\begin{eqnarray}
H_\bigstar^t &=& \mathbb{F}_2[u_\sigma^\pm, \as^\pm] \nonumber \\ 
H_\bigstar^f &=& \mathbb{F}_2[u_\sigma^\pm, \as^{-1}]. \nonumber 
\end{eqnarray}
Now, consider the boundary map $\partial$ defined by using the long exact sequences of homotopy groups for the top and bottom rows of the Tate diagram: 
$$\partial: H_\bigstar^t \to H_{\bigstar -1}^f \to H_{\bigstar-1} ^m.$$
The element $\theta$ is the image of $\us^{-1}\as^{-1} \in H^t_\bigstar$ under the boundary map $\partial$.  
\end{rem}


With these coefficient groups in hand, we are now ready to compute the equivariant dual Steenrod algebras.  When computing $\mathcal{A}_\bigstar^{cc}$, we need to work in the category $\mathscr{M}$ of bigraded $\mathbb{Z}[a_\sigma]$-modules that are complete with respect to the topology associated with the principal ideal $(a_\sigma)$.  The morphisms in this category are continuous homomorphisms.  It turns out that even though $\mathcal{A}_\bigstar^{cc}$ is not flat over $H_\bigstar^c$ as $\mathbb{F}_2$-modules, completion by $(a_\sigma)$ ensures that $\mathcal{A}_\bigstar^{cc}$ \textit{is} flat over $H_\bigstar^c$ in the category $\mathscr{M}$.  Thus, we can regard $(H_\bigstar^c, \mathcal{A}_\bigstar^{cc})$ as a $\mathscr{M}$-Hopf algebroid.  

\begin{thm}[Hu--Kriz] \label{thm:HuKriz6.10}
The $\mathscr{M}$-Hopf algebroid $(H_\bigstar^{c}, \mathcal{A}_\bigstar^{cc})$ can be described by the following structure formulas: 
\begin{enumerate}
\item $\displaystyle \mathcal{A}_\bigstar^{cc} = H_\bigstar^c [\zeta_i \, | \, i \geq 1]^{\wedge}_{a_\sigma}$, $\dim \zeta_i = 2^i -1$;
\item $\displaystyle \psi(\zeta_i) = \sum_{0 \leq j \leq i} \zeta_{i-j}^{2^j} \otimes \zeta_j$, with $\zeta_0 = 1$;
\item $\displaystyle \eta_R(u_\sigma^{-1}) = \sum_{i \geq 0} (u_\sigma^{-1})^{2^i} \zeta_i a_\sigma^{2^i -1}$;
\item $\displaystyle \eta_R(a_\sigma) = a_\sigma$.
\end{enumerate}
\end{thm}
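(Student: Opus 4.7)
The plan is to identify $(H^c_\bigstar, \mathcal{A}^{cc}_\bigstar)$ as a Borel-completed lift of the classical Milnor Hopf algebroid $(\F, \mathcal{A}_*)$. Because $H^c = F(E{C_2}_+, H\F)$ is Borel-complete, one has an $RO(C_2)$-graded homotopy fixed point spectral sequence
\[
E_2^{s,V} = H^s(C_2;\, \pi^u_V(H\F \wedge H\F)) \Longrightarrow \mathcal{A}^{cc}_{V-s},
\]
whose input is the classical dual Steenrod algebra $\mathcal{A}_* = \F[\zeta_1,\zeta_2,\ldots]$ equipped with the swap-conjugation $C_2$-action inherited from the swap of the two smash factors. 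The first step is to compute this $E_2$-page and show that, after $(\as)$-adic completion, it collapses onto the claimed answer.

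For part (1), I would argue that the Milnor generators can be rechosen so that each $\zeta_i$ and its conjugate $\bar{\zeta}_i$ agree modulo an ideal that is nilpotent on any finite skeleton of $EC_2$, so each $\zeta_i$ lifts to a well-defined class in $\mathcal{A}^{cc}_{2^i-1}$ after Borel completion. Combining this with $H^*(C_2;\F) \cong \F[\as]$ and with the Bott-like $\us^{\pm}$-periodicity in the $\sigma$-direction assembles the $E_2$-page into $H^c_\bigstar[\zeta_1,\zeta_2,\ldots]^{\wedge}_{\as}$; bidegree considerations rule out differentials, giving (1). Part (2) is then forced from the classical Milnor coproduct: since both sides of $\psi(\zeta_i) = \sum \zeta_{i-j}^{2^j} \otimes \zeta_j$ lie in the pure integer-degree part of $\mathcal{A}^{cc}_\bigstar \otimes_{H^c_\bigstar} \mathcal{A}^{cc}_\bigstar$, there is no room for $\us$- or $\as$-correction terms, so the equivariant coproduct reduces to the underlying non-equivariant one. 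Part (4), $\eta_R(\as) = \as$, is immediate because $\as$ is pulled back from $\pi^{C_2}_{-\sigma}\mathbb{S}$ and hence is fixed by both unit maps.

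The main obstacle is part (3), the formula for $\eta_R(\us^{-1})$, since it genuinely mixes $RO(C_2)$-degrees. My strategy is to write $\eta_R(\us^{-1}) = \us^{-1} + \sum_{i\geq 1} c_i \cdot \as^{2^i-1}$ for unknown classes $c_i$ of the correct integer dimension $1 - 2^i$ (this is the only shape compatible with the bidegree of $\us^{-1}$ and with the fact that $\eta_R(\us^{-1})$ must reduce to $\us^{-1}$ after inverting $\us$ and killing $\as$). Determining the $c_i$ then proceeds by two inputs: first, the requirement that $\eta_R$ is a ring map, together with the formal fact that it must send the Tate class $\us^{-1}\as^{-1}$ to itself in $H^t_\bigstar$; second, the boundary map computation of Remark~\ref{rem:theta}, which identifies how $\us^{-1}$-type classes interact with the polynomial generators $\zeta_i$ under the Tate diagram. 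Proceeding by induction on $i$ and matching against this boundary computation pins down $c_i = \us^{-2^i}\zeta_i$, yielding the stated formula. The technically delicate part is making the identification via the boundary map precise, since it requires tracking the compatibility of the Borel and genuine constructions carefully.
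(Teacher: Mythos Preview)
Your overall framework via the homotopy fixed point spectral sequence matches the paper's brief sketch for part (1), but there are two genuine gaps.

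First, the $C_2$-action on $\pi_*^u(H\F \wedge H\F) = \mathcal{A}_*$ entering the HFPSS is \emph{trivial}, not the swap-conjugation action you describe. Here $H\F$ is a non-equivariant spectrum and the Borel construction $F(E{C_2}_+,-)$ supplies the equivariance; the $RO(C_2)$-grading enters only through the twist by $\text{sgn}^{\otimes r}$ in the coefficients, exactly as in the paper's displayed spectral sequence. With the trivial action the $E_2$-page is already $\mathcal{A}_* \otimes \F[\us^{\pm},\as]$ and collapses for degree reasons, so your rechoosing-of-generators step is unnecessary: the $\zeta_i$ are simply the images of the classical Milnor generators under $H\F \wedge H\F \to F(E{C_2}_+, H\F \wedge H\F)$, which is precisely what the paper records after the theorem statement.

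Second, your route to part (3) does not work as written. Remark~\ref{rem:theta} and the boundary map $\partial$ concern the construction of $\theta \in H^m_\bigstar$ and provide no mechanism for producing the $\zeta_i$ in a formula for $\eta_R(\us^{-1})$; the claim that $\eta_R$ must fix $\us^{-1}\as^{-1}$ in $H^t_\bigstar$ cannot be fed back into $H^c_\bigstar$, which has no $\as^{-1}$; and your ansatz $\eta_R(\us^{-1}) = \us^{-1} + \sum_i c_i\,\as^{2^i-1}$ already presupposes the answer, since nothing in your argument forces the exponents of $\as$ to be of the form $2^i-1$ rather than arbitrary. The paper does not prove (3) itself but cites Hu--Kriz; their argument is geometric, using the classical coaction $\psi(r) = \sum_{i\geq 0} r^{2^i}\otimes \zeta_i$ on the generator $r$ of the Borel cohomology of $B\mathbb{Z}/2 = \mathbb{RP}^\infty$ together with the relation between $r$, $\us$, and $\as$ there. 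That same geometric input is what the paper later exploits in the proofs of Theorems~\ref{thm:xizeta} and~\ref{thm:tauzeta}.
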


The formula for $\mathcal{A}_\bigstar^{cc}$ in Theorem~\ref{thm:HuKriz6.10} is obtained from the $RO(C_2)$-graded homotopy fixed point spectral sequence 
$$H^s(C_2; \pi_t(H\mathbb{F}_2 \wedge H\mathbb{F}_2) \otimes \text{sgn}^{\otimes r}) \Longrightarrow \pi_{t-s + (\sigma -1)r} F(E{C_2}_+, H\mathbb{F}_2 \wedge H\mathbb{F}_2),$$
where $\text{sgn}$ is the sign representation.  The $\zeta_i$ generators in $\mathcal{A}_\bigstar^{cc}$ are images of the $\zeta_i$ generators in the classical dual Steenrod algebra $\mathcal{A}_* = \mathbb{F}_2[\zeta_i \, | \, i \geq 1]$ under the map 
$$H\mathbb{F}_2 \wedge H \mathbb{F}_2 \to F(E{C_2}_+, H\mathbb{F}_2 \wedge H \mathbb{F}_2).$$
Although Theorem~\ref{thm:HuKriz6.10} provides formulas describing $(H_\bigstar^c, \mathcal{A}_\bigstar^{cc})$ as a $\mathscr{M}$-Hopf algebroid, it is not very helpful for computing the Hopf algebroid structure of $(H_\bigstar^m, \mathcal{A}_\bigstar^m)$.  To further compute $\mathcal{A}_\bigstar^m$, Hu and Kriz constructed explicit equivariant generators $\xi_i$ and $\tau_i$ in both $\mathcal{A}^m_\bigstar$ and $\mathcal{A}^{cc}_\bigstar$. These generators are compatible in the sense that under the map $\mathcal{A}^m_\bigstar \to \mathcal{A}^{cc}_\bigstar$, $\tau_i \mapsto \tau_i$ and $\xi_i \mapsto \xi_i$.  By computing the relations between the $\xi_i$'s and $\tau_i$'s, Hu and Kriz obtained an alternative description of the $\mathscr{M}$-Hopf algebroid structure of $(H_\bigstar^c, \mathcal{A}_\bigstar^{cc})$.  Afterwards, they observed that the exact same relations hold in $\mathcal{A}_\bigstar^m$ as well.  This observation ultimately led them to conclude the Hopf algebroid structure of $(\Hm_\bigstar, \mathcal{A}^m_\bigstar)$.  \\

We now introduce the $\xi_i$ and $\tau_i$ generators.  We structure our exposition to focus on describing the map 
$$\mathcal{A}_* \to \mathcal{A}^m_\bigstar.$$
Understanding this map will be of great importance to us later on.

\begin{df}\rm
For $X$ an $C_2$-equivariant spectrum, let 
\begin{eqnarray*}
H_\bigstar^{cc} X &:=& F(E{C_2}_+, H\mathbb{F}_2 \wedge X)_\bigstar, \\ 
H_\bigstar^m X &:=& (\Hm \wedge X)_\bigstar. 
\end{eqnarray*}
\end{df}

Classically, if a spectrum $E$ is complex oriented, then one can easily compute $E_*BP$ as follows: choose a complex orientation $b \in E^2 (\mathbb{CP}^\infty)$.  Associated to $b$ is a coproduct formula
$$\psi(b) = {\sum_{i \geq 0}}^F b^{2^i} \otimes \xi_i,$$
where $F$ is formal sum induced by the complex orientation of $E$.  From this coproduct formula, one is led to conclude that 
$$E_*BP = E_*[\xi_i \, | \, i \geq 1], \, \, \, |\xi_i| = 2(2^i -1).$$
This argument works $C_2$-equivariantly as well.  The genuine Eilenberg--Mac Lane spectrum $\Hm$ is Real oriented via the Thom map $BP_\mathbb{R} \to \Hm$.  Applying the argument above produces equivariant polynomial generators for $H_\bigstar^{cc} BP_\mathbb{R}$ and $H_\bigstar^m BP_\mathbb{R}$.  

\begin{prop} [Hu--Kriz] \label{prop:BPRxi}
There exist generators $\xi_i$ of dimensions $|\xi_i| = (2^i -1) \rho_2$ in both $H_\bigstar^{m} BP_\mathbb{R}$ and $H_\bigstar^{cc} BP_\mathbb{R}$, such that 
\begin{eqnarray}
H_\bigstar^m BP_\mathbb{R} &=& H_\bigstar^m [\xi_i \, | \, i \geq 1], \nonumber \\
H_\bigstar^{cc} BP_\mathbb{R} &=& H_\bigstar^c [\xi_i \, | \, i \geq 1]. \nonumber 
\end{eqnarray}
Furthermore, the two sets of $\xi_i$ generators are compatible in the sense that the map 
$$\Hm \wedge BP_\mathbb{R} \to F(E{C_2}_+, \Hm \wedge BP_\mathbb{R}) = F(E{C_2}_+, H\mathbb{F}_2 \wedge BP_\mathbb{R}) $$
induces the map 
$$H_\bigstar^m BP_\mathbb{R} \to H_\bigstar^{cc} BP_\mathbb{R}$$
sending $\xi_i \mapsto \xi_i$. 
\end{prop}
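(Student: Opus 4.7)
The plan is to adapt the classical Quillen-style argument sketched in the paragraph preceding the proposition, which identifies the generators of $E_*BP$ (for a complex-oriented $E$) via the coordinate change between two complex orientations on $E \wedge BP$. The Real-equivariant analogue is available because the Thom map $BP_\mathbb{R} \to \Hm$ equips $\Hm$ with a Real orientation $b \in \Hm^{\rho_2}(\mathbb{CP}^\infty)$, where $\mathbb{CP}^\infty$ carries the $C_2$-action by complex conjugation. This yields $\Hm^\bigstar(\mathbb{CP}^\infty) = \Hm^\bigstar[[b]]$, with the same statement holding when $\Hm$ is replaced by $F(EC_{2+}, H\mathbb{F}_2)$, using its induced Real orientation.

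To produce the $\xi_i$, consider the two Real orientations on $\Hm \wedge BP_\mathbb{R}$: the class $b_L$ pulled from the left factor $\Hm$, and the class $b_R$ pushed from the right factor $BP_\mathbb{R}$ via the Thom map. In the power series ring $(\Hm \wedge BP_\mathbb{R})^\bigstar(\mathbb{CP}^\infty) = (\Hm \wedge BP_\mathbb{R})^\bigstar[[b_L]]$, the class $b_R$ expands as
$$b_R = \sum_{i \geq 0}{\vphantom{\sum}}^F b_L^{2^i}\,\xi_i, \qquad \xi_0 = 1,$$
where $F$ denotes the additive Real formal group law carried by the Real orientation of $\Hm$. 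By inspection of bidegrees, the coefficients $\xi_i$ lie in $\pi_{(2^i - 1)\rho_2}^{C_2}(\Hm \wedge BP_\mathbb{R}) = H_\bigstar^m BP_\mathbb{R}$. The same construction performed with $F(EC_{2+}, H\mathbb{F}_2)$ in place of $\Hm$ produces $\xi_i \in H_\bigstar^{cc} BP_\mathbb{R}$; the compatibility $\xi_i \mapsto \xi_i$ is automatic because the comparison map $\Hm \wedge BP_\mathbb{R} \to F(EC_{2+}, \Hm \wedge BP_\mathbb{R})$ is a map of Real-oriented ring spectra sending $b$ to $b$.

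It then remains to verify the polynomial structure $H_\bigstar^m BP_\mathbb{R} = \Hm_\bigstar[\xi_i\,|\,i \geq 1]$ and its Borel analogue, which is the principal obstacle. Because $\Hm_\bigstar$ contains the torsion $\theta$-summand described in Remark~\ref{rem:theta}, one cannot simply count dimensions over $\Hm_\bigstar$. The cleanest approach is to first establish the Borel case $H_\bigstar^{cc} BP_\mathbb{R} = H_\bigstar^c[\xi_i]$ by running the $RO(C_2)$-graded homotopy fixed point spectral sequence with input the classical computation $H_*BP = \mathbb{F}_2[\xi_i]$; this spectral sequence collapses for sparsity reasons, yielding the expected polynomial algebra over $H_\bigstar^c$. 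The genuine statement then follows by transferring back across the comparison map $\Hm \wedge BP_\mathbb{R} \to F(EC_{2+}, \Hm \wedge BP_\mathbb{R})$, exploiting that this map is an isomorphism in the cohomologically non-negative range (by the preceding computation of $\Hm_\bigstar$ versus $H_\bigstar^c$) and that the $\xi_i$ together with all their monomials lie entirely in this range.
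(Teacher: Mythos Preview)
Your construction of the $\xi_i$ via the two Real orientations on $\Hm \wedge BP_\mathbb{R}$, and the compatibility under the Borel comparison map, are correct and match the paper's sketch (the paper gives no proof beyond citing Hu--Kriz and the classical analogy). The Borel computation via the $RO(C_2)$-graded homotopy fixed point spectral sequence is also sound.

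The gap is in your final step. You assert that the comparison map $(\Hm \wedge BP_\mathbb{R})_\bigstar \to F(EC_{2+}, \Hm \wedge BP_\mathbb{R})_\bigstar$ is an isomorphism in the range $p \geq 0$, citing the corresponding fact for $\Hm_\bigstar \to H^c_\bigstar$. That inference is not valid: the isomorphism range for coefficients does not propagate to $\Hm \wedge X$ without structural input about $X$. In fact the claim fails here. A $\xi$-monomial in degree $N\rho_2$ with $N \geq 1$, multiplied by $u_\sigma^{-N} \in H^c_\bigstar$, contributes to $p = 0$ on the Borel side; the matching contribution on the genuine side would require a class in $(\Hm)_{-N + \ast\sigma}$, which is zero for $N = 1$ and lies in the $\theta$-range for $N \geq 2$, so the two sides differ already at $p = 0$. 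Even granting the isomorphism in $p \geq 0$, this would still leave $H_\bigstar^m BP_\mathbb{R}$ undetermined in $p < 0$, which is exactly where you must verify that the $\theta$-summand of $\Hm_\bigstar$ tensors freely against the $\xi$-monomials.

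The input you are missing is the Real cell structure of $BP_\mathbb{R}$: as a $C_2$-spectrum it is built entirely from regular-representation cells $S^{n\rho_2}$ (this is Araki's result, and in the paper's language it is the content of the multiplicative refinement $S^0[\bar{v}_1, \bar{v}_2, \ldots] \to BP_\mathbb{R}$ underlying the Slice Theorem). This immediately gives a splitting $\Hm \wedge BP_\mathbb{R} \simeq \bigvee_n \Sigma^{n\rho_2} \Hm$ as $\Hm$-modules, with the $\xi_i$ labeling the summands multiplicatively; both the genuine and Borel polynomial descriptions, and their compatibility, follow at once.
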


\begin{df}\rm
The orientation map $f: BP_\mathbb{R} \to \Hm$ induces the commutative diagram 
$$\begin{tikzcd}
\Hm \wedge BP_\mathbb{R} \ar[d] \ar[r, "\text{id} \wedge f"] & \Hm \wedge \Hm \ar[d] \\ 
F(E{C_2}_+, \Hm \wedge BP_\mathbb{R}) \ar[r] & F(E{C_2}_+, \Hm \wedge \Hm),
\end{tikzcd}$$
which, after taking equivariant homotopy groups $\pi_\bigstar^{C_2}(-)$, becomes
$$\begin{tikzcd} 
H^m_\bigstar BP_\mathbb{R} \ar[r] \ar[d] & \mathcal{A}^m_\bigstar \ar[d] \\ 
H^{cc}_\bigstar BP_\mathbb{R} \ar[r] & \mathcal{A}^{cc}_\bigstar.
\end{tikzcd}$$
The image of the $\xi_i$ generators in Proposition~\ref{prop:BPRxi} produces generators $\xi_i \in \mathcal{A}_\bigstar^m$ and $\mathcal{A}_\bigstar^{cc}$.  
\end{df}

Consider the commutative diagram 
$$\begin{tikzcd}
H\mathbb{F}_2 \wedge H\mathbb{F}_2 = \Hm^{C_2} \wedge \Hm^{C_2} \ar[r] \ar[rd] & (\Hm \wedge \Hm)^{C_2} \ar[d] \\ 
& F(E{C_2}_+, \Hm \wedge \Hm)^{C_2} \ar[d, equal] \\
& F(E{C_2}_+, H\mathbb{F}_2 \wedge H\mathbb{F}_2)^{C_2}.
\end{tikzcd}$$
Taking $\pi_*(-)$ produces the diagram
$$\begin{tikzcd}
\mathcal{A}_* \ar[r] \ar[rd] & \mathcal{A}_*^m \ar[d] \\
& \mathcal{A}_*^{cc}. 
\end{tikzcd}$$
Here, $\mathcal{A}_*^m$ and $\mathcal{A}_*^{cc}$ are the integer graded parts of $\mathcal{A}_\bigstar^m$ and $\mathcal{A}_\bigstar^{cc}$, respectively.  The following theorem provides formulas relating the $\xi_i$ generators and images of the $\zeta_i$ generators under the maps $\mathcal{A}_* \to \mathcal{A}^m_* \to \mathcal{A}^{cc}_*$.  

\begin{thm} [Relations between $\xi_i$ and $\zeta_i$] \hfill \label{thm:xizeta} 
\begin{enumerate}
\item $ \displaystyle \psi (\xi_i) = \sum_{0 \leq j \leq i} \xi_{i-j}^{2^j} \otimes \xi_j$;
\item The $\xi_i$ generators are related to the images of the $\zeta_i$ generators (which, by an abuse of notation, will also be denoted by $\zeta_i$) by the recursion formulas 
\begin{eqnarray}
\xi_0 &=& 1 \nonumber \\ 
a_\sigma^{2^i} \xi_i &=& \zeta_{i-1}^2 \eta_R(\us) + \zeta_i \as + \xi_{i-1} \us^{2^{i-1}}, \,\, \,  i \geq 1.
\end{eqnarray}
\end{enumerate}
\end{thm}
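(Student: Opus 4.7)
The plan is to treat both parts using the Real orientation on $\Hm$ coming from the Thom map $BP_\mathbb{R} \to \Hm$, together with the Borel coaction data recorded in Theorem~\ref{thm:HuKriz6.10}.

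\textbf{Part (1) via coassociativity.} The $\xi_i$ are characterized by the coaction $\psi(b) = \sum_{j \geq 0} \xi_j b^{2^j}$ on the universal Real orientation class $b \in H^m_\bigstar \mathbb{CP}^\infty_\mathbb{R}$, with $\xi_0 = 1$; this particularly clean shape reflects the additivity of the Real formal group law on the ordinary-cohomology spectrum $\Hm$. Applying coassociativity $(\psi \otimes \mathrm{id})\psi(b) = (\mathrm{id} \otimes \psi)\psi(b)$, expanding both sides as formal series in $b^{2^j}$ with coefficients in $\mathcal{A}^m_\bigstar \otimes_{H^m_\bigstar} \mathcal{A}^m_\bigstar$, and comparing the coefficient of $b^{2^i}$ yields $\psi(\xi_i) = \sum_{0 \leq j \leq i} \xi_{i-j}^{2^j} \otimes \xi_j$. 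This is exactly the standard formal-group-law calculation, mirroring the classical coproduct on $BP_*BP$.

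\textbf{Part (2) via comparison of orientations.} The strategy is to work first in $\mathcal{A}^{cc}_\bigstar$, where Theorem~\ref{thm:HuKriz6.10} gives closed-form coactions $\eta_R(\us^{-1}) = \sum_{i \geq 0} (\us^{-1})^{2^i} \zeta_i \as^{2^i-1}$ and $\eta_R(\as) = \as$, and then to lift the resulting identity back to $\mathcal{A}^m_\bigstar$. The key geometric input is an explicit comparison in $H^{cc}_\bigstar \mathbb{CP}^\infty_\mathbb{R}$ between the Real orientation $b$ (degree $\rho_2 = 1+\sigma$) and the Borel Euler class together with the classical integer-graded polynomial generator inherited from $BP$, with coefficients involving powers of $\us$ and $\as$. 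Applying $\psi$ to this comparison identity, using the Borel coactions on $\us$ and $\as$ on one side and $\psi(b) = \sum \xi_j b^{2^j}$ on the other, and then projecting onto the summand of degree $(2^i - 1)\rho_2$, forces
\[ \as^{2^i} \xi_i = \zeta_{i-1}^2 \eta_R(\us) + \zeta_i \as + \xi_{i-1} \us^{2^{i-1}} \]
as an identity in $\mathcal{A}^{cc}_\bigstar$.

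To upgrade this identity to $\mathcal{A}^m_\bigstar$, I would use the compatibility $\xi_i \mapsto \xi_i$ under the map $\mathcal{A}^m_\bigstar \to \mathcal{A}^{cc}_\bigstar$ (Proposition~\ref{prop:BPRxi}) together with the fact that $H^m_{p + q\sigma} \to H^c_{p + q\sigma}$ is an isomorphism for $p \geq 0$. Both sides of the recursion live in total degree $(2^i - 1) - \sigma$, whose integer part $2^i - 1$ is nonnegative, so we are in the range where the map is injective on the relevant polynomial summand and the identity lifts uniquely. The main obstacle is the combinatorial bookkeeping in part (2): tracking powers of $\as$ and $\us$ across the two coaction formulas and verifying that the many cross-terms produced by expanding $\eta_R(\us^{2^{i-1}})$ collapse to precisely the three listed summands.
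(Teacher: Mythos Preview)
Your argument for part~(1) is correct and essentially identical to the paper's: both apply coassociativity to $\psi(b)$ and compare coefficients of $b^{2^i}$.

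For part~(2), however, your approach diverges from the paper's and contains a genuine gap. The paper does not work in $\mathcal{A}^{cc}_\bigstar$ at all; it argues directly in $\mathcal{A}^m_\bigstar$ using the restriction map $\Hm^\bigstar(BS^1_+) \to \Hm^\bigstar(B\mathbb{Z}/2_+)$ induced by $B\mathbb{Z}/2_+ \to BS^1_+$. The crucial formula is that this restriction sends $b \mapsto r^2 u_\sigma + r a_\sigma$, where $r$ is the degree-$1$ generator with coaction $\psi(r) = \sum r^{2^i} \otimes \zeta_i$. Naturality of the coaction then gives two expressions for $\psi(r^2 u_\sigma + r a_\sigma)$, and comparing coefficients of $r^{2^i}$ yields the recursion immediately. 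No lifting from $\mathcal{A}^{cc}_\bigstar$ is needed; in fact the paper remarks that once the identity holds in $\mathcal{A}^m_\bigstar$ it automatically holds in $\mathcal{A}^{cc}_\bigstar$, which is the opposite direction from your proposed lift.

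The gap in your approach is that you never write down the ``comparison identity'' in $H^{cc}_\bigstar \mathbb{CP}^\infty_\mathbb{R}$ to which you intend to apply $\psi$. Phrases like ``the Borel Euler class together with the classical integer-graded polynomial generator'' are not a formula; without an explicit relation analogous to $b = r^2 u_\sigma + r a_\sigma$, there is nothing to expand and compare. Moreover, your proposed source of the $\zeta_i$'s is the right-unit formula $\eta_R(u_\sigma^{-1}) = \sum (u_\sigma^{-1})^{2^i} \zeta_i a_\sigma^{2^i - 1}$, but $u_\sigma^{-1}$ does not exist in $H^m_\bigstar$, so any identity you derive this way lives genuinely in the completed Borel setting and the lift back to $\mathcal{A}^m_\bigstar$ requires more care than the degree check you sketch. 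The paper's route through $B\mathbb{Z}/2$ avoids all of this: the $\zeta_i$'s enter via $\psi(r)$, and everything stays in $\mathcal{A}^m_\bigstar$ from the start.
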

\begin{proof}
We prove the relations in $\mathcal{A}^m_\bigstar$.  Once we have proven that they hold in $\mathcal{A}^m_\bigstar$, they will automatically hold in $\mathcal{A}^{cc}_\bigstar$ as well.  The proof is essentially the same as the proof of Theorem 6.18 in \cite{HuKriz}.  Let $b \in \Hm^{\rho_2} (BS^1_+)$ be the Real orientation and $r \in \Hm^1(B\mathbb{Z}/2_+)$ be the generator of $\Hm^\bigstar (B\mathbb{Z}/2_+)$.  The coproduct formulas for $b$ and $r$ are, by definition,   
\begin{eqnarray}
\psi(b) &=& \sum_{i \geq 0} b^{2^i} \otimes \xi_i  \nonumber \\ 
\psi(r) &=& \sum_{i \geq 0 } r^{2^i} \otimes \zeta_i. \nonumber 
\end{eqnarray}
Part (1) is obtained by computing $\psi(\psi(b))$ in two ways through the commutative diagram 
$$\begin{tikzcd}
\Hm^\bigstar(BS^1_+) \ar[r, "\psi"] \ar[d, "\psi"] & \Hm^\bigstar(BS^1_+) \wedge \mathcal{A}_\bigstar^m \ar[d, "\text{id} \wedge \psi"] \\
\Hm^\bigstar(BS^1_+)  \wedge \mathcal{A}_\bigstar^m \ar[r, "\psi \wedge \text{id}"] & \Hm^\bigstar(BS^1_+) \wedge \mathcal{A}_\bigstar^m \wedge \mathcal{A}_\bigstar^m
\end{tikzcd}$$
and comparing the coefficients of $b^{2^i}$: 
$$\sum_{i \geq 0} \psi(b)^{2^i} \otimes \xi_i = \psi(\psi(b)) = \sum_{i \geq 0} b^{2^i} \otimes \psi(\xi_i).$$
For part (2), the map $B\mathbb{Z}/2_+ \to BS^1_+$ induces the map
$$\Hm^\bigstar(BS^1_+) \to \Hm^\bigstar(B\mathbb{Z}/2_+)$$
on equivariant cohomologies.  This is a map 
$$\Hm_\bigstar[[b]] \to \Hm_\bigstar[[r]],$$
where $|b| = -\rho_2$, $|r| = -1$.  We would like to express the image of $b$ in terms of $r$.  The only terms on the right hand side that are of degree $-\rho_2$ are $r^2u_\sigma$ and $ra_\sigma$.  Hu and Kriz show that $b$ maps to the sum of these two terms: 
$$b \mapsto r^2 u_\sigma + ra_\sigma.$$
The commutative diagram
$$\begin{tikzcd}
\Hm^\bigstar(BS_+^1) \ar[r, "f"] \ar[d, "\psi"] & \Hm^\bigstar(B\mathbb{Z}/2_+) \ar[d, "\psi"] \\ 
\Hm^\bigstar(BS_+^1) \wedge \mathcal{A}_\bigstar^m \ar[r, "f \wedge \text{id}"]& \Hm^\bigstar(B\mathbb{Z}/2_+) \wedge \mathcal{A}_\bigstar^m,
\end{tikzcd}$$
obtained by the naturality of the coproduct, implies that 
$$\sum_{i \geq 0} (r^2\us + r\as)^{2^i} \otimes \xi_i =  \psi(b) = \psi(r^2u_\sigma + ra_\sigma) =\eta_R(\us) \sum_{i \geq 0} r^{2^{i+1}} \otimes \zeta_i^2 + a_\sigma \sum_{i \geq 0} r^{2^i} \otimes \zeta_i.$$
Comparing coefficients of $r^{2^i}$ on both sides produces the recursion formulas, as desired.
\end{proof}

We will now define the $\tau_i$ generators and compute their relations to the images of the classical $\zeta_i$ generators.  Consider the $C_2$-equivariant map $BS^1 \to BS^1$ classifying the squaring of Real line bundles.  This produces the fiber sequence
\begin{eqnarray}
B_\mathbb{R} \mathbb{Z}/2 \to BS^1 \stackrel{L^2}{\to} BS^1 \label{BRZ2},
\end{eqnarray}
where the fiber $B_\mathbb{R} \mathbb{Z}/2$ is $\mathbb{RP}^\infty$, but with a nontrivial $C_2$-action (the fixed point of $B_\mathbb{R} \mathbb{Z}/2$ under the $C_2$ action is $\mathbb{RP}^\infty \coprod \mathbb{RP}^\infty$).  The Real orientation $b \in H\underline{\mathbb{Z}}^{\rho_2}(BS^1)$ restricts to a class $b' \in H\underline{\mathbb{Z}}^{\rho_2} (B_\mathbb{R} \mathbb{Z}/2)$.  Under the map $\HZ \to \Hm$, this gives a class $b'' \in \Hm^{\rho_2} (B_\mathbb{R} \mathbb{Z}/2)$:  
$$\begin{tikzcd}
b \in \HZ^{\rho_2}(\mathbb{CP}^\infty)  \ar[r, mapsto] \ar[d, mapsto] & b' \in \HZ^{\rho_2}(B_\mathbb{R} \mathbb{Z}/2) \ar[d, mapsto] \\
b \in \Hm^{\rho_2} (\mathbb{CP}^\infty) \ar[r, mapsto] & b'' \in \Hm^{\rho_2} (B_\mathbb{R} \mathbb{Z}/2).
\end{tikzcd}$$

The composition 
$$\HZ^\bigstar(BS^1) \to \HZ^\bigstar(BS^1)  \to \HZ^\bigstar(B_\mathbb{R} \mathbb{Z}/2) $$
sends $b \mapsto 2b' = 0$.  This implies that $b'$ is in the image of the Bockstein $\beta$, induced by $\HZ \stackrel{2}{\to} \HZ \to \Hm$: 
$$\cdots \to \Hm^{\sigma} (B_\mathbb{R} \mathbb{Z}/2) \stackrel{\beta}{\to} \HZ^{\rho_2} (B_\mathbb{R} \mathbb{Z}/2) \stackrel{\cdot 2}{\to} \HZ^{\rho_2}(B_\mathbb{R} \mathbb{Z}/2) \to \cdots .$$
Let $c \in \Hm^{\sigma}(B_\mathbb{R} \mathbb{Z}/2)$ be a class such that $\beta c = b'$.  

\begin{prop}[Hu--Kriz]
$\Hm^\bigstar (B_\mathbb{R} \mathbb{Z}/2)$ is a free $\Hm^\bigstar$-module with basis $\{1, b, b^2, \ldots, c, cb, cb^2, \ldots \}$.  
\end{prop}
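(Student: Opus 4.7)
The plan is to use the fiber sequence \eqref{BRZ2} to bootstrap from the known $\Hm^\bigstar(BS^1)=\Hm^\bigstar[[b]]$, together with the Real formal group law on $\Hm$, and then match the answer against the explicit classes $b^i$ and $c b^i$ coming from the Real orientation and the Bockstein. First, I would identify the map $L^2$ on cohomology. Since $\Hm$ admits a Real orientation via $BP_\mathbb{R}\to\Hm$ and carries the mod-$2$ reduction of the Real additive formal group law, the $2$-series collapses to $[2]_F(b)=a_\sigma b^2$, so $(L^2)^*:\Hm^\bigstar[[b]]\to\Hm^\bigstar[[b]]$ sends $b\mapsto a_\sigma b^2$.

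Next, I would run the $C_2$-equivariant Serre spectral sequence
$$E_2^{*,\bigstar}=\Hm^*\bigl(BS^1;\Hm^\bigstar(B_\mathbb{R}\mathbb{Z}/2)\bigr)\Longrightarrow\Hm^{*+\bigstar}(BS^1)$$
associated to the fibration $B_\mathbb{R}\mathbb{Z}/2\to BS^1\xrightarrow{L^2}BS^1$. The transgression of the fibre class $b\in\Hm^{\rho_2}(B_\mathbb{R}\mathbb{Z}/2)$ must produce $(L^2)^*(b)=a_\sigma b^2$ in the base. Combined with multiplicativity and the fact that the spectral sequence converges to the known free module $\Hm^\bigstar[[b]]$, an $RO(C_2)$-graded bidegree count then forces the fibre cohomology to be a free $\Hm^\bigstar$-module on two towers of generators: one in degrees $i\rho_2$ (the powers of $b$) and a second in degrees $\sigma+i\rho_2$ (the Bockstein-linked class $c$ and its products with $b$).

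Finally, I would identify the abstract spectral-sequence output with the explicit geometric generators. The class $b$ on $B_\mathbb{R}\mathbb{Z}/2$ is literally the restriction of the Real orientation along $B_\mathbb{R}\mathbb{Z}/2\hookrightarrow BS^1$, so its powers $b^i$ realize the first tower. The class $c\in\Hm^\sigma(B_\mathbb{R}\mathbb{Z}/2)$, constructed so that $\beta c=b$, realizes the second tower through its multiplicative products $cb^i$; the existence of $c$ is exactly what makes the second tower non-transgressive and accounts for the fact that $b'$ is killed by $2$ in integral cohomology. The main obstacle is that $\Hm_\bigstar$ is not a PID and contains the negative-cone torsion classes $\theta/(u_\sigma^i a_\sigma^j)$, so one must rule out that these interact with the spectral sequence to produce extra generators or hidden extensions. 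I expect this to follow from the observation that multiplication by $a_\sigma b^2$ acts regularly on the positive cone of $\Hm^\bigstar[[b]]$, while the negative-cone summand of $\Hm^\bigstar$ is annihilated by both $u_\sigma$ and $a_\sigma$ (and any two of its elements multiply to zero), so it cannot support free $\Hm^\bigstar$-summands in the fibre cohomology. A cross-check against the non-equivariant restriction $H^*(\mathbb{RP}^\infty;\mathbb{F}_2)=\mathbb{F}_2[r]$ with $b\mapsto r^2$ and $c\mapsto r$, together with the geometric fixed-point computation, confirms the rank count in every bidegree.
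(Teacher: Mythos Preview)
Your argument has two concrete errors that derail it.

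First, the formula $[2]_F(b)=a_\sigma b^2$ is wrong. The Real formal group law on $\Hm$ is additive: it is the mod-$2$ reduction of the additive law carried by $\HZ$, so $[2]_F(b)=2b=0$. This is already forced by $RO(C_2)$-degrees: $[2]_F(b)$ must live in $\Hm^{\rho_2}(BS^1)$, while $a_\sigma b^2$ lives in degree $\sigma+2\rho_2=2+3\sigma$. (More directly, the coefficient $a_{11}$ of $b^2$ in the formal group law would have to lie in $\Hm_{\rho_2}=\Hm_{1+\sigma}$, which is zero.)

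Second, you conflate transgression with $(L^2)^*$. The fibre class $b\in\Hm^{\rho_2}(B_\mathbb{R}\mathbb{Z}/2)$ is by construction the \emph{restriction} of $b$ from the total space $BS^1$, so it is a permanent cycle in the Serre spectral sequence, not a transgressive one. If anything transgresses it is $c$, not $b$; your differential pattern is therefore upside down, and the ``bidegree count'' you invoke does not go through.

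The paper bypasses all of this with the Gysin cofiber sequence
\[
B_\mathbb{R}\mathbb{Z}/2_+\longrightarrow BS^1_+\longrightarrow \text{Thom}(BS^1,L^2).
\]
The Thom isomorphism identifies $\Hm^\bigstar(\text{Thom}(BS^1,L^2))$ with a free rank-one $\Hm^\bigstar(BS^1_+)$-module on the Thom class $x$, and the map to $\Hm^\bigstar(BS^1_+)$ sends $x$ to the Euler class $[2]_F(b)=0$. Hence the long exact sequence splits as $\Hm^\bigstar$-modules, giving
\[
\Hm^\bigstar(B_\mathbb{R}\mathbb{Z}/2_+)\cong \Hm^\bigstar(BS^1_+)\oplus \Hm^\bigstar(BS^1_+)\{c\},
\]
where $c$ is any preimage of $x$. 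Freeness over $\Hm^\bigstar$ is inherited from $\Hm^\bigstar(BS^1_+)=\Hm^\bigstar[[b]]$, so no separate analysis of the negative cone is needed. Your Serre spectral sequence can be made to work once the two errors are corrected, but it then just reproduces this Gysin argument with extra bookkeeping.
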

\begin{proof}
Consider the cofiber sequence 
$$B_\mathbb{R} \mathbb{Z}/2_+ \to BS^1_+ \to \text{Thom}(BS^1, L^2). $$
Taking $\Hm^\bigstar(-)$ produces the Gysin sequence 
$$\Hm^\bigstar(\text{Thom}(BS^1, L^2)) \stackrel{0}{\to} \Hm^\bigstar(BS^1_+) \to \Hm^\bigstar(B_\mathbb{R} \mathbb{Z}/2_+).$$
By the Thom isomorphism theorem, $\Hm^\bigstar(\text{Thom}(BS^1, L^2)) \cong \Hm^\bigstar(BS^1_+)[x]$ as a free $\Hm^\bigstar(BS^1_+)$-module.  The generator $x \in \Hm^{\rho_2}(\text{Thom}(BS^1, L^2))$ maps to $0 \in \Hm^\bigstar(BS_+^1)$, and it is the image of $c \in \Hm^{\sigma}(B_\mathbb{R} \mathbb{Z}/2_+)$.  It follows that as a $\Hm_\bigstar$-module, 
$$\Hm^\bigstar(B_\mathbb{R} \mathbb{Z}/2_+) \cong \Hm^\bigstar(BS_+^1) \oplus \Hm^\bigstar(BS_+^1)[c] = \Hm_\bigstar\{1, b, b^2, \ldots, c, cb, cb^2, \ldots\}.$$

\end{proof}

Since $\psi(x) = x \otimes 1$ and $c \mapsto x$, the coproduct formula for $c$ must be of the form 
$$\psi(c) = c \otimes 1 + \sum_{i \geq 0} (b'')^{2^i} \otimes \tau_i, $$
where $\tau_i$ are elements in $\mathcal{A}_\bigstar^m$ with dimensions $|\tau_i| = (2^i -1) \rho_{C_2} +1$.

\begin{thm} [Hu--Kriz] \hfill \label{thm:tauzeta}
\begin{enumerate}
\item $\displaystyle \psi(\tau_i) = \tau_i \otimes 1 + \sum_{0 \leq j \leq i} \xi_{i-j}^{2^j} \otimes \tau_j$.
\item The $\tau_i$ generators are related to the images of the $\zeta_i$ generators by the recursion formulas 
\begin{eqnarray}
a_\sigma \tau_0 &=& \eta_R(u_\sigma) + u_\sigma, \nonumber \\ 
a_\sigma^{2^i} \tau_i &=& \tau_{i-1} u_\sigma^{2^{i-1}} + \zeta_i \eta_R(u_\sigma), \, \, \, i \geq 1. \nonumber 
\end{eqnarray}
\end{enumerate}
\end{thm}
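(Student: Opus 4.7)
The plan is to mimic the proof of Theorem~\ref{thm:xizeta}, replacing the Real orientation class $b \in \Hm^{\rho_2}(BS^1_+)$ by the class $c \in \Hm^{\sigma}(B_\mathbb{R}\mathbb{Z}/2_+)$. The defining coproduct formula
\[\psi(c) = c \otimes 1 + \sum_{i \geq 0}(b'')^{2^i}\otimes \tau_i,\]
together with the formula $\psi(b'') = \sum_{j \geq 0} (b'')^{2^j} \otimes \xi_j$ (obtained from $\psi(b)$ by naturality of the map $B_\mathbb{R}\mathbb{Z}/2_+ \to BS^1_+$ underlying the fiber sequence~(\ref{BRZ2})), are the two ingredients that drive the whole argument.

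For part (1), I would apply coassociativity of the coproduct to $c$. Computing $((\psi \wedge \mathrm{id}) \circ \psi)(c)$ and $((\mathrm{id} \wedge \psi) \circ \psi)(c)$ and expanding both sides using the two formulas above produces two polynomial expressions in $b''$ with coefficients in $\mathcal{A}^m_\bigstar \wedge \mathcal{A}^m_\bigstar$. Matching coefficients of $(b'')^{2^j}$ (which is legitimate, since $1, b'', (b'')^2, \ldots$ together with the $c$-translates form a free $\Hm_\bigstar$-basis of $\Hm^\bigstar(B_\mathbb{R}\mathbb{Z}/2_+)$) then yields
\[\psi(\tau_j) = \tau_j \otimes 1 + \sum_{0 \leq i \leq j} \xi_{j-i}^{2^i} \otimes \tau_i.\]

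For part (2), I would argue by naturality, exactly parallel to the $\xi_i$ case. In the proof of Theorem~\ref{thm:xizeta}(2), the critical step was identifying the image of $b$ under the restriction $\Hm^\bigstar(BS^1_+) \to \Hm^\bigstar(B\mathbb{Z}/2_+)$ as $r^2 u_\sigma + r a_\sigma$. Here, one uses the analogous restriction $\Hm^\bigstar(B_\mathbb{R}\mathbb{Z}/2_+) \to \Hm^\bigstar(B\mathbb{Z}/2_+)$ induced by the appropriate $C_2$-equivariant map, and pins down the image of $c$ by combining the defining relation $\beta c = b'$ with the already-known image of $b$ and the Bockstein structure on $\Hm^\bigstar(B\mathbb{Z}/2_+)$. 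Applying the naturality square for $\psi$ to $c$ and expanding both sides as polynomials in $r$, then comparing the $r^{2^i}$-coefficients, produces the recursion formulas
\[a_\sigma \tau_0 = \eta_R(u_\sigma) + u_\sigma, \qquad a_\sigma^{2^i} \tau_i = \tau_{i-1} u_\sigma^{2^{i-1}} + \zeta_i\, \eta_R(u_\sigma).\]

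The hard part is part (2): making the restriction map precise and determining the image of $c$ (of degree $\sigma$, so a priori a $\Hm_\bigstar$-linear combination of the monomials $r \us$ and $\as$ times appropriate classes) from the Bockstein condition. Once this image is correctly identified, the argument reduces to an expansion and coefficient-matching completely analogous to the $\xi_i$ case, where the asymmetric appearance of $u_\sigma$ versus $\eta_R(u_\sigma)$ in the recursion is exactly what one expects from the fact that the left and right units of the Hopf algebroid $(\Hm_\bigstar, \mathcal{A}^m_\bigstar)$ disagree on $u_\sigma$.
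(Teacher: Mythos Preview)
Your proposal is correct and follows essentially the same route as the paper: coassociativity applied to $c$ for part (1), and naturality under the restriction $\Hm^\bigstar(B_\mathbb{R}\mathbb{Z}/2_+) \to \Hm^\bigstar(B\mathbb{Z}/2_+) = \Hm_\bigstar[[r]]$ for part (2), followed by matching $r^{2^i}$-coefficients.

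One small remark on part (2): you propose to pin down the image of $c$ using the Bockstein relation $\beta c = b'$. The paper takes a slightly different tack here. It simply observes that for degree reasons the image of $c$ lies in $\Hm_\bigstar\{r u_\sigma, a_\sigma\}$, and then notes (citing Hu--Kriz) that the two possible images $r u_\sigma$ and $r u_\sigma + a_\sigma$ correspond to the two choices of Bockstein lift $c$; either choice yields the \emph{same} recursion for $\tau_i$ versus $\zeta_i$. This sidesteps the need to compute the Bockstein on $\Hm^\bigstar(B\mathbb{Z}/2_+)$ explicitly, which is convenient since the ambiguity in $c$ itself means the Bockstein condition alone cannot single out one image. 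With the convention $c \mapsto r u_\sigma$, one computes $\psi(c) = \psi(r u_\sigma) = \eta_R(u_\sigma)\sum r^{2^i}\otimes \zeta_i$, and comparing with $c\otimes 1 + \sum (r^2 u_\sigma + r a_\sigma)^{2^i}\otimes \tau_i$ gives the recursion directly.
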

\begin{proof}
The coproduct formula for $b''$ is the same as the one for $b$: 
$$\psi(b'') = \sum_{i \geq 0} b''^{2^i} \otimes \xi_i.$$
Similar to Theorem~\ref{thm:xizeta}, part (1) can be proved by computing $\psi(\psi(c))$ in two ways: 
$$\psi(c) \otimes 1 + \sum_{i \geq 0} \psi(b'')^{2^i} \otimes \xi_i = \psi(\psi(c)) = c \otimes 1 \otimes 1 + \sum_{i \geq 0} b''^{2^i} \otimes \psi(\tau_i).$$

For part (2), similar to the proof of Theorem~\ref{thm:xizeta}, we consider the map on cohomology 
$$\Hm^\bigstar (B_\mathbb{R} \mathbb{Z}/2) \to \Hm^\bigstar(\mathbb{RP}^\infty) = \Hm_\bigstar[[r]].$$
This map is induced by the map $\mathbb{RP}^\infty \hookrightarrow B_\mathbb{R} \mathbb{Z}/2^{C_2} \to B_\mathbb{R}\mathbb{Z}/2$.  The image of $b'' \in \Hm^\sigma(B_\mathbb{R}\mathbb{Z}/2)$ is the same as the one we found in Theorem~\ref{thm:xizeta}, $r^2u_\sigma + ra_\sigma$.  To find the image of $c \in \Hm^\sigma(B_\mathbb{R}\mathbb{Z}/2)$,  note that the only terms in $\Hm_\bigstar[[r]]$ of degree $-\sigma$ are $\{ru_\sigma, a_\sigma\}$.  Hu and Kriz showed that depending our choices, $c$ can either map to $ru_\sigma$ or $ru_\sigma + a_\sigma$.  Assume that we have chosen $c$ so that $c \mapsto ru_\sigma$ (the relationship between $\tau_i$ and $\zeta_i$ is going to be the same regardless of this choice).  There are two ways to compute $\psi(c)$.  On one hand, 
$$\psi(c) = c \otimes 1 + \sum_{i \geq 0} b''^{2^i} \otimes \tau_i = (ru_\sigma) \otimes 1 + \sum_{i \geq 0} (r^2u_\sigma + ra_\sigma)^{2^i} \otimes \tau_i.$$
On the other hand, 
$$\psi(c) =  \psi(ru_\sigma) = \eta_R(u_\sigma) \psi(r) = \eta_R(u_\sigma) \sum_{i \geq 0} r^{2^i} \otimes \zeta_i.$$
Comparing the coefficients of $r^{2^i}$ for both expressions produces the recursion formulas, as desired. 
\end{proof}

\begin{rem}\rm
The proof above also shows that in the ring $\Hm^\bigstar(B_\mathbb{R} \mathbb{Z}/2)$, there is the relation $c^2 = b'' u_\sigma + c\as$, regardless of the choice of $c$.  
\end{rem}

Using the formulas in Theorem~\ref{thm:xizeta} and Theorem~\ref{thm:tauzeta}, one can show that in both $\mathcal{A}_\bigstar^{cc}$ and $\mathcal{A}_\bigstar^m$, the $\xi_i$ and $\tau_i$ generators are related by the formula 
$$\tau_i^2 = \tau_{i+1}a_\sigma + \xi_{i+1} \eta_R(\us) \, \, \, \, \, \text{(\cite[Proposition~6.37]{HuKriz})}.$$
This is the last ingredient needed to compute the Hopf algebroids $(H_\bigstar^c, \mathcal{A}_\bigstar^{cc})$ and $(H_\bigstar^m, \mathcal{A}_\bigstar^m)$. 

\begin{thm} [Corollary 6.40 and Theorem 6.41 in \cite{HuKriz}] \hfill \label{thm:AccAm} 
\begin{enumerate}
\item The $\mathscr{M}$-Hopf algebroid $(H_\bigstar^c, \mathcal{A}_\bigstar^{cc})$ can be described by  
$$\mathcal{A}_\bigstar^{cc} = H_\bigstar^{c}[\xi_i, \tau_i]/(\tau_0 \as = u_\sigma + \eta_R(u_\sigma), \, \tau_i^2 = \tau_{i+1}\as + \xi_{i+1} \eta_R(\us)),$$
with comultiplications
\begin{enumerate}
\item $\displaystyle \psi(\xi_i) = \sum_{0 \leq j \leq i} \xi_{i-j}^{2^j} \otimes \xi_j$;
\item $\displaystyle \psi(\tau_i) = \tau_i \otimes 1+ \sum_{0 \leq j \leq i} \xi_{i-j}^{2^j} \otimes \tau_j$.
\end{enumerate}

\item The Hopf algebroid $(H^m_\bigstar, \mathcal{A}_\bigstar^m)$ can be described by   
$$\mathcal{A}_\bigstar^m = \Hm_\bigstar [\xi_i, \tau_i]/(\tau_0\as = \us + \eta_R(\us), \tau_i^2 = \tau_{i+1} \as + \xi_{i+1} \eta_R(\us)).$$
The comultiplications are the same as the ones in $(H_\bigstar^c, \mathcal{A}_\bigstar^{cc})$.  The right unit on the elements $\frac{\theta}{u_\sigma^i a_\sigma^j} \in \Hm_\bigstar$ is given by the formula
$$\eta_R\left(\frac{\theta}{u_\sigma^i a_\sigma^j} \right) = \partial\left(\frac{1}{(u_\sigma + \tau_0\as)^{i+1}\as^{j+1}} \right), \, \, \, i \geq 0, \, j \geq 0,$$
where $\partial$ is the boundary map in Remark~\ref{rem:theta}. 
\end{enumerate}
\end{thm}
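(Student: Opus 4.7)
The plan is to deduce Theorem~\ref{thm:AccAm} as a formal consequence of the four preceding results: the presentation of $\mathcal{A}_\bigstar^{cc}$ in terms of the $\zeta_i$'s from Theorem~\ref{thm:HuKriz6.10}, the recursion relations for $\xi_i$ in Theorem~\ref{thm:xizeta}, the recursion relations for $\tau_i$ in Theorem~\ref{thm:tauzeta}, and the relation $\tau_i^2 = \tau_{i+1}\as + \xi_{i+1}\eta_R(\us)$ derived from them. In other words, part (1) amounts to a change-of-generators computation from $\{\zeta_i\}$ to $\{\xi_i,\tau_i\}$, and part (2) is then obtained by transporting this description back across the Tate square.

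First I would verify that the map
\[
H_\bigstar^c[\xi_i,\tau_i]/(\tau_0\as - \us - \eta_R(\us),\ \tau_i^2 - \tau_{i+1}\as - \xi_{i+1}\eta_R(\us)) \longrightarrow \mathcal{A}_\bigstar^{cc}
\]
is well-defined: the first relation is the $i=0$ case of Theorem~\ref{thm:tauzeta}(2), and the second is the relation already recorded in the text. To see this map is an isomorphism in the category $\mathscr{M}$, note that the element $\eta_R(\us) = \us + \tau_0\as$ is a unit in the $\as$-adic completion, since $\us$ is invertible in $H_\bigstar^c$ and $\tau_0\as$ is topologically nilpotent. The recursion in Theorem~\ref{thm:tauzeta}(2) therefore solves uniquely for
\[
\zeta_i = \eta_R(\us)^{-1}\bigl(\as^{2^i}\tau_i - \tau_{i-1}\us^{2^{i-1}}\bigr),
\]
and substituting this into Theorem~\ref{thm:xizeta}(2) expresses each $\xi_i$ as a universal polynomial in $\{\tau_j, \zeta_j\}$, or equivalently in $\{\tau_j\}$ alone. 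This shows the change-of-generators map is surjective. Injectivity is a Poincar\'e series count: the monomial basis of $H_\bigstar^c[\zeta_i]^{\wedge}_{\as}$ matches the free $H_\bigstar^c$-basis of the presented algebra once one observes that the relation $\tau_i^2 = \tau_{i+1}\as + \xi_{i+1}\eta_R(\us)$ allows any power $\tau_i^{e_i}$ to be reduced to a $\tau_i^{\epsilon_i}$ with $\epsilon_i\in\{0,1\}$, while freeing up polynomial generators $\xi_{i+1}$. The comultiplication formulas for $\xi_i$ and $\tau_i$ are exactly (1a)--(1b) of the theorem, and these were already proven as parts~(1) of Theorem~\ref{thm:xizeta} and Theorem~\ref{thm:tauzeta}.

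For part (2), the strategy is to split $\Hm_\bigstar$ along the Tate diagram of Remark~\ref{rem:theta} into the polynomial piece $\F[\us,\as]$ and the $\theta$-torsion piece. The map $\Hm\wedge\Hm \to F(E{C_2}_+,\Hm\wedge\Hm)$ induces a map of Hopf algebroids $\mathcal{A}_\bigstar^m \to \mathcal{A}_\bigstar^{cc}$ which, by the same Real-orientation argument used to define the $\xi_i$ and $\tau_i$ generators, lifts the generators and relations of (1) verbatim to $\mathcal{A}_\bigstar^m$ in the range where $\Hm_{p+q\sigma}\to H^c_{p+q\sigma}$ is an isomorphism. Since every structure formula in the presentation lives in this positive range, the presentation for $\mathcal{A}_\bigstar^m$ follows. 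For the right unit on the $\theta$-elements, we exploit naturality of the Tate square: by Remark~\ref{rem:theta}, $\theta/\us^i\as^j$ is the image under the boundary $\partial$ of $1/(\us^{i+1}\as^{j+1}) \in H_\bigstar^t$. Since $\partial$ is natural with respect to maps of $\Hm$-modules, and since $\eta_R(\us)=\us+\tau_0\as$ in $\mathcal{A}_\bigstar^m$ (hence also after inverting $\as$ in the Tate version), we may compute
\[
\eta_R\!\left(\frac{\theta}{\us^i\as^j}\right) = \partial\,\eta_R\!\left(\frac{1}{\us^{i+1}\as^{j+1}}\right) = \partial\!\left(\frac{1}{(\us+\tau_0\as)^{i+1}\as^{j+1}}\right),
\]
which is the claimed formula. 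The main obstacle is the last step: ensuring that the boundary $\partial$ really is $\eta_R$-equivariant and that inverting $\eta_R(\us)$ inside the Tate version of $\mathcal{A}^m\wedge\mathcal{A}^m$ is legal. This is handled by observing that the Tate construction is smashing in the $\Hm$-module category and that $\eta_R(\us) = \us(1+\tau_0\as\us^{-1})$ becomes a unit in $H_\bigstar^t$ exactly as it did in $H_\bigstar^c$.
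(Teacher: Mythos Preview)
The paper does not supply its own proof of this theorem; it is stated with a citation to Hu--Kriz, and the preceding paragraphs sketch exactly the ingredients you use (the recursion relations of Theorems~\ref{thm:xizeta} and~\ref{thm:tauzeta}, the relation $\tau_i^2 = \tau_{i+1}\as + \xi_{i+1}\eta_R(\us)$, and the Tate boundary description of $\theta$). Your proposal is a faithful fleshing-out of that sketch: the change of generators from $\{\zeta_i\}$ to $\{\xi_i,\tau_i\}$ via invertibility of $\eta_R(\us)$ in the $\as$-completion, and the naturality of $\partial$ for the right unit on the $\theta$-classes, are precisely the Hu--Kriz arguments the paper is pointing to.

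One small caution: your injectivity step (``a Poincar\'e series count'') and the claim that the positive-cone isomorphism $\Hm_{p+q\sigma}\to H^c_{p+q\sigma}$ alone transports the presentation to $\mathcal{A}_\bigstar^m$ are both a bit quick. For the latter you also need that $\mathcal{A}_\bigstar^m$ is free as an $\Hm_\bigstar$-module on the same monomial basis, which Hu--Kriz establish separately; the paper alludes to this when it says they ``observed that the exact same relations hold in $\mathcal{A}_\bigstar^m$ as well.'' These are not gaps in your strategy, just places where a reader would want a reference to the relevant freeness statement in \cite{HuKriz}.
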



There are certain extensions involving the Hopf algebroids $(H_\bigstar^c, \mathcal{A}_\bigstar^{cc})$ and $(H_\bigstar^m, \mathcal{A}_\bigstar^m)$ that will produce change of rings theorems.  As we will see later, these change of rings theorems will greatly simplify the computation of the $C_2$-equivariant May and Adams spectral sequences of $BP_\mathbb{R}$.  

Let $P_\bigstar = \mathbb{F}_2[\xi_i \,| \, i \geq 1]$.  Then with the coproduct formula 
$$\psi(\xi_i) = \sum_{0 \leq j \leq i } \xi_{i-j}^{2^j} \otimes \xi_j,$$
$(\mathbb{F}_2, P_\bigstar)$ is a $\mathbb{F}_2$-Hopf algebra.  Similarly, $(\mathbb{F}_2[\as], P_\bigstar[\as])$ is a $\mathbb{F}_2[\as]$-Hopf algebra.  

\begin{prop}[Proposition 6.29 and Theorem 6.41(b) in \cite{HuKriz}] \hfill\label{HopfLambda} 
\begin{enumerate}
\item There is an extension of $\mathscr{M}$-Hopf algebroids
$$(\mathbb{F}_2[\as], P_\bigstar[\as]) \to (H_\bigstar^c, \mathcal{A}_\bigstar^{cc}) \to (H_\bigstar^c, \Lambda_\bigstar^{cc}),$$
where 
$$\Lambda^{cc}_\bigstar = H_\bigstar^c[\tau_i]/(\tau_i^2 = \tau_{i+1}\as),$$
with structure formulas
\begin{enumerate}
\item $\tau_i$ are primitive; 
\item $\eta_R(a_\sigma) = a_\sigma$; 
\item $\eta_R(u_\sigma)= u_\sigma + \tau_0a_\sigma.$
\end{enumerate}

\item There is an extension of Hopf algebroids 
$$(\mathbb{F}_2[\as], P_\bigstar[\as]) \to (H_\bigstar^m, \mathcal{A}_\bigstar^{m}) \to (H_\bigstar^m, \Lambda_\bigstar^{m}),$$
where 
$$\Lambda^m_\bigstar = \Hm_\bigstar[\tau_i]/(\tau_i^2 = \tau_{i+1}a_\sigma).$$
The structure formulas for $\tau_i$, $\as$, and $\us$ are the same as the ones in $(H_\bigstar^c, \Lambda_\bigstar^{cc})$.  
\end{enumerate}
\end{prop}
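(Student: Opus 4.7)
The plan is to read off both extensions from the generator--relation description of $\mathcal{A}_\bigstar^{cc}$ and $\mathcal{A}_\bigstar^m$ in Theorem~\ref{thm:AccAm}, identifying $\Lambda^{cc}_\bigstar$ (respectively $\Lambda^m_\bigstar$) as the quotient by the ideal generated by the $\xi_i$ for $i \geq 1$. Most of the hard work has already been carried out in proving Theorem~\ref{thm:AccAm}; this proposition should emerge as bookkeeping that assembles those formulas into a short exact sequence of Hopf algebroids.

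First I would verify that $(\mathbb{F}_2[a_\sigma], P_\bigstar[a_\sigma])$ sits inside $(H_\bigstar^c, \mathcal{A}_\bigstar^{cc})$ as a normal sub-Hopf-algebroid over $\mathbb{F}_2[a_\sigma]$. The coproduct formula $\psi(\xi_i) = \sum_{0 \leq j \leq i} \xi_{i-j}^{2^j} \otimes \xi_j$ from Theorem~\ref{thm:xizeta}(1) shows that $P_\bigstar[a_\sigma]$ is closed under $\psi$, and Theorem~\ref{thm:HuKriz6.10}(4) gives $\eta_R(a_\sigma) = a_\sigma$. The $\xi_i$ are pulled back from $H^{cc}_\bigstar BP_\mathbb{R}$ via the Real orientation (Proposition~\ref{prop:BPRxi}), so $\eta_L(\xi_i) = \eta_R(\xi_i) = \xi_i$, which is what is needed for normality and hence for the quotient construction to produce a well-defined Hopf algebroid.

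Next I would compute the quotient Hopf-algebroid by setting $\xi_i = 0$ for $i \geq 1$ (with $\xi_0 = 1$) in the defining relations of Theorem~\ref{thm:AccAm}(1). The relation $\tau_i^2 = \tau_{i+1} a_\sigma + \xi_{i+1} \eta_R(u_\sigma)$ reduces to $\tau_i^2 = \tau_{i+1} a_\sigma$, and the relation $\tau_0 a_\sigma = u_\sigma + \eta_R(u_\sigma)$, being free of $\xi_i$'s, is unchanged and rearranges in characteristic $2$ to $\eta_R(u_\sigma) = u_\sigma + \tau_0 a_\sigma$. The coproduct $\psi(\tau_i) = \tau_i \otimes 1 + \sum_{0 \leq j \leq i} \xi_{i-j}^{2^j} \otimes \tau_j$ collapses in the quotient to $\tau_i \otimes 1 + 1 \otimes \tau_i$, proving primitivity of $\tau_i$.

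Part (2) then follows by the same argument over the larger base $H_\bigstar^m$, since Theorem~\ref{thm:AccAm}(2) records the identical relations and coproducts. The one point requiring care is to verify that $\Lambda^{cc}_\bigstar$ is flat over $H_\bigstar^c$ in the category $\mathscr{M}$, so that we really do obtain a Hopf-algebroid object in that category; this is immediate from the explicit $H_\bigstar^c$-monomial basis given by reduced monomials in the $\tau_i$ modulo the quadratic relations $\tau_i^2 = \tau_{i+1} a_\sigma$. I do not expect a serious obstacle: Proposition~\ref{HopfLambda} is in essence a clean corollary of the structural computations already assembled in Theorems~\ref{thm:xizeta}, \ref{thm:tauzeta}, and \ref{thm:AccAm}, and its proof amounts to recording them in the form of an extension.
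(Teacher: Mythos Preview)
Your proposal is correct and is the natural way to derive this proposition from the structural formulas already recorded in Theorem~\ref{thm:AccAm}. Note, however, that the paper does not actually prove Proposition~\ref{HopfLambda}: it is stated with attribution to Hu--Kriz (their Proposition~6.29 and Theorem~6.41(b)) and no proof is given. Your argument is essentially the one Hu--Kriz give, and is the only reasonable route: identify $P_\bigstar[a_\sigma]$ as a sub-Hopf-algebroid via the coproduct formula for $\xi_i$, then pass to the quotient by the augmentation ideal $(\xi_1,\xi_2,\ldots)$ and read off the induced structure on $\Lambda_\bigstar$.

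One small point of phrasing: writing ``$\eta_L(\xi_i) = \eta_R(\xi_i) = \xi_i$'' is not quite sensible, since $\xi_i$ lives in the algebroid $\mathcal{A}_\bigstar^{cc}$ rather than in the base ring, so the units $\eta_L,\eta_R$ do not apply to it. What you need for normality (and hence for the quotient to inherit a Hopf-algebroid structure) is that the ideal $(\xi_i : i\geq 1)$ is a Hopf ideal, i.e., stable under the coproduct, the counit, and the antipode. Stability under $\psi$ follows from $\psi(\xi_i) = \sum_j \xi_{i-j}^{2^j}\otimes \xi_j$ since every nontrivial term has a positive-index $\xi$ on at least one side; stability under $\epsilon$ is $\epsilon(\xi_i)=0$; and stability under the antipode follows formally from the coproduct formula by induction. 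With that correction your argument goes through cleanly.
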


\subsection{The equivariant Adams spectral sequence}\label{EquivariantASS}
We now introduce the equivariant Adams spectral sequences that are associated to the Hopf algebroids $(\Hm_\bigstar, \mathcal{A}^m_\bigstar)$ and $(H^{c}_\bigstar, \mathcal{A}^{cc}_\bigstar)$, respectively.  

Given a $C_2$-equivariant spectrum $X$, we can resolve $X$ by $\Hm$.  The resulting resolution is the equivariant Adams resolution of $X$.  The spectral sequence associated to this resolution is the \textbf{$C_2$-equivariant Adams spectral sequence} associated to $(\Hm_\bigstar, \mathcal{A}^m_\bigstar)$.  Hu and Kriz observed that the equivariant Steenrod algebra $\mathcal{A}^m_\bigstar$ is a free $\Hm_\bigstar$-module, hence flat over $\Hm_\bigstar$.  From this, they concluded that the $E_2$-page of the $(\Hm_\bigstar, \mathcal{A}^m_\bigstar)$-Adams spectral sequence can be identified as 
$$\Ext_{\mathcal{A}^m_\bigstar}(\Hm_\bigstar, \Hm_\bigstar X) \Longrightarrow (\pi_\bigstar^{C_2} X)_2^{\wedge}. $$
(cf. \cite[Corollary~6.47]{HuKriz}).  Similar to the classical Adams spectral sequence, the equivariant Adams spectral sequence will converge in nice cases.  In particular, it will converge for $X$ a finite $C_2$-spectrum, $MU_\mathbb{R}$, or $BP_\mathbb{R}$. 

On the other hand, by work of Greenlees \cite{GreenleesThesis, Greenlees1988, Greenlees1990}, we can also form the classical Adams resolution of the underlying spectrum of $X$, and then apply the functor $F(E{C_2}_+, -)$ to the classical Adams tower.  The resulting spectral sequence associated to this new tower has $E_2$-page 
$$\Ext_{\mathcal{A}^{cc}_\bigstar} (H^c_\bigstar, H^{cc}_\bigstar X) \Longrightarrow (\pi_\bigstar^{C_2} F(E{C_2}_+, X))_2^\wedge.$$
Again, this equivariant Adams spectral sequence will converge in our cases of interest. 

By applying the functor $F(E{C_2}_+, -)$ to the equivariant Adams resolution of $X$ by $\Hm$, we produce a map of towers, hence a map between the two Adams spectral sequences
$$\begin{tikzcd}
\Ext_{\mathcal{A}^m_\bigstar}(\Hm_\bigstar, \Hm_\bigstar X) \ar[r, Rightarrow] \ar[d] & (\pi_\bigstar^{C_2} X)_2^{\wedge} \ar[d] \\ 
\Ext_{\mathcal{A}^{cc}_\bigstar} (H^c_\bigstar, H^{cc}_\bigstar X) \ar[r, Rightarrow] & (\pi_\bigstar^{C_2} F(E{C_2}_+, X))_2^\wedge.
\end{tikzcd}$$

On the $E_2$-page, the map 
$$\Ext_{\mathcal{A}^m_\bigstar}(\Hm_\bigstar, \Hm_\bigstar X) \to \Ext_{\mathcal{A}^{cc}_\bigstar} (H^c_\bigstar, H^{cc}_\bigstar X)$$
is induced from the map $(\Hm_\bigstar, \mathcal{A}^m_\bigstar) \to (H^c_\bigstar, \mathcal{A}^{cc}_\bigstar)$ of Hopf algebroids.

When $X = BP_\mathbb{R}$, we can simplify the $E_2$-pages of both Adams spectral sequences using Proposition~\ref{prop:BPRxi} and Proposition~\ref{HopfLambda}:
\begin{eqnarray*}
\Ext_{\mathcal{A}^{m}_\bigstar}(\Hm_\bigstar, \Hm_\bigstar BP_\mathbb{R}) &=& \Ext_{\Lambda^m_\bigstar}(\Hm_\bigstar, \Hm_\bigstar), \\ 
\Ext_{\mathcal{A}^{cc}_\bigstar} (H^c_\bigstar, H^{cc}_\bigstar BP_\mathbb{R}) &=& \Ext_{\Lambda^{cc}_\bigstar} (H^c_\bigstar, H^{c}_\bigstar).
\end{eqnarray*}
Here, $\Lambda^m = \Hm \wedge_{BP_\mathbb{R}} \Hm$ and $\Lambda^{cc} = F(E{C_2}_+, \Hm \wedge_{BP_\mathbb{R}} \Hm)$.

\section{The Slice Spectral Sequence of $BP_\mathbb{R}$} \label{sec:SliceSSHFPSSBPR}

We will now discuss the slice spectral sequence and the homotopy fixed point spectral sequence of $BP_\mathbb{R}$.

\subsection{The slice spectral sequence of $BP_\mathbb{R}$}
For definitions and properties of the slice filtration, we refer the readers to \cite[Section~4]{HHRKervaire}.  We will be interested in both the integer-graded and the $RO(C_2)$-graded slice spectral sequence of $BP_\mathbb{R}$:
\begin{eqnarray*}
E_2^{s, t} = \pi_{t -s}^{C_2} P^t_t BP_\mathbb{R} &\Longrightarrow& \pi_{t-s}^{C_2} BP_\mathbb{R} \\
E_{2}^{s, V} = \pi_{V-s}^{C_2} P^{\dim V}_{\dim V} BP_\mathbb{R} &\Longrightarrow& \pi_{V - s}^{C_2} BP_\mathbb{R}
\end{eqnarray*}

The gradings are the Adams grading, with $r^{\text{th}}$-differentials $d_r: E_2^{s, t} \to E_2^{s+r, t+(r-1)}$ and $d_r: E_2^{s, V} \to E_2^{s + r, V+(r-1)}$, respectively. 

To produce the $E_2$-page of the slice spectral sequence, we compute the slice sections $P^t_t BP_\mathbb{R}$.  Let $\bar{v}_i \in \pi_{(2^i-1)\rho_2}BP_\mathbb{R}$ be the equivariant lifts of the usual generators $v_i \in \pi_{2(2^i-1)} BP$.  Using the method of twisted monoid rings \cite[Section~2.4]{HHRKervaire}, we construct the $A_\infty$-map 
$$S^0[\bar{v}_1, \bar{v}_2, \ldots]  \to BP_\mathbb{R}.$$
This map has the property that after taking $\pi_*^u(-)$, it becomes an isomorphism.  Using terminologies developed in \cite{HHRKervaire}, this map is a \textit{multiplicative refinement} of $\pi_*^uBP_\mathbb{R}$.  Furthermore, this multiplicative refinement produces the slice sections of $BP_\mathbb{R}$.  The following result is a special case of the Slice Theorem (\cite[Theorem~6.1]{HHRKervaire}) applied to $BP_\mathbb{R}$. 

\begin{prop}\label{prop:BPRSlices}
The only nonzero slice sections of $BP_\mathbb{R}$ are $P^{2n}_{2n} BP_\mathbb{R}$, where $n \geq 0$.  They are 
$$P_{2n}^{2n} BP_\mathbb{R} = \left( \bigvee_I S^{n\rho_2} \right) \wedge H\underline{\mathbb{Z}}$$
where $I$ is the indexing set consisting of all monomials of the form $\bar{v}_1^{i_1} \bar{v}_2^{i_2}\bar{v}_3^{i_3}\cdots$ with 
$$n\rho_2= (\rho_2)i_1 + (3\rho_2)i_2 + (7\rho_2)i_3 + \cdots.$$
\end{prop}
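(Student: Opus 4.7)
The plan is to apply the Slice Theorem of Hill--Hopkins--Ravenel (\cite[Theorem~6.1]{HHRKervaire}) to $BP_\mathbb{R}$, so the work consists in verifying its hypotheses, namely producing a multiplicative slice refinement by regular slice cells of the form $S^{n\rho_2}$. I would organize the proof in four steps.

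First, I would construct the $A_\infty$-map $\iota \colon S^0[\bar{v}_1, \bar{v}_2, \ldots] \to BP_\mathbb{R}$ using the method of twisted monoid rings from \cite[Section~2.4]{HHRKervaire}. Concretely, the classes $\bar{v}_i \in \pi_{(2^i-1)\rho_2}^{C_2}BP_\mathbb{R}$ exist by construction of $BP_\mathbb{R}$ as a quotient of $MU_\mathbb{R}$, and the twisted monoid ring construction packages them into a single associative map whose source is the free $A_\infty$-algebra $S^0[\bar{v}_1,\bar{v}_2,\ldots] := \bigvee_I S^{n_I \rho_2}$, indexed by monomials $I = \bar{v}_1^{i_1}\bar{v}_2^{i_2}\cdots$ with weight $n_I\rho_2 = \sum_i i_i (2^i-1)\rho_2$.

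Second, I would verify that $\iota$ is a multiplicative refinement of $\pi_*^u BP_\mathbb{R}$ in the sense of \cite{HHRKervaire}. On underlying homotopy this amounts to the classical computation $\pi_*^u BP_\mathbb{R} = \pi_* BP_{(2)} = \mathbb{Z}_{(2)}[v_1, v_2, \ldots]$ with $|v_i| = 2(2^i-1)$, and $\iota$ sends the monomial indexed by $I$ to $v_1^{i_1}v_2^{i_2}\cdots$; this is an isomorphism of $\pi_*^u$-modules by construction.

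Third, I would smash $\iota$ with $H\underline{\mathbb{Z}}$. Since each summand $S^{n\rho_2} \wedge H\underline{\mathbb{Z}}$ is pure of slice dimension $2n$ (it is a regular isotropic slice cell in the language of \cite{HHRKervaire}), the resulting map
\[
\bigvee_I S^{n_I \rho_2} \wedge H\underline{\mathbb{Z}} \longrightarrow BP_\mathbb{R} \wedge H\underline{\mathbb{Z}}
\]
provides a wedge of slice cells mapping to $BP_\mathbb{R} \wedge H\underline{\mathbb{Z}}$, and by the refinement property it realizes $BP_\mathbb{R}$ as built from slice cells whose dimensions are all of the form $n\rho_2$.

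Finally, I would invoke the Slice Theorem. Given a multiplicative refinement by regular slice cells indexed in degrees $n\rho_2$, the theorem identifies $P^{2n}_{2n}BP_\mathbb{R}$ with the wedge of those cells of slice dimension exactly $2n$, smashed with $H\underline{\mathbb{Z}}$, which is precisely $\bigl(\bigvee_{I : n_I = n} S^{n\rho_2}\bigr) \wedge H\underline{\mathbb{Z}}$, and all other slice sections vanish. The main obstacle is the verification of the refinement hypothesis in step two, i.e.\ checking that the twisted monoid ring construction genuinely hits a polynomial basis of $\pi_*^u BP_\mathbb{R}$; once that is in place the Slice Theorem does the rest.
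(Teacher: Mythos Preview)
Your proposal is correct and follows essentially the same approach as the paper: the paper constructs the $A_\infty$-map $S^0[\bar{v}_1,\bar{v}_2,\ldots]\to BP_\mathbb{R}$ via the twisted monoid ring method of \cite[Section~2.4]{HHRKervaire}, observes that it becomes an isomorphism on $\pi_*^u$ (hence is a multiplicative refinement), and then states the proposition as a special case of the Slice Theorem \cite[Theorem~6.1]{HHRKervaire}. Your steps one, two, and four are exactly this argument, and your step three just makes explicit what the Slice Theorem packages.
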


Proposition~\ref{prop:BPRSlices} shows that computing the $E_2$-page of the slice spectral sequence of $BP_\mathbb{R}$ can be reduced to computing the coefficient group $H\underline{\mathbb{Z}}_\bigstar^{C_2}$.   

\begin{df}[The classes $a_V$ and $u_V$] \rm
Let $V$ be a representation of $G$ with $\dim V = d$.  
\begin{enumerate}
\item $a_V \in \pi_{-V}^G S^0$ is the map corresponding to the inclusion $S^0 \hookrightarrow S^V$ induced by $\{0\} \subset V$.
\item If $V$ is oriented, $u_V \in \pi_{d-V}^G \HZ$ is the class corresponding to the generator of $H_d^{G}(S^V; \HZ)$.  
\end{enumerate}
\end{df}

A comprehensive computation for the coefficient ring of $\HZ_\bigstar^{C_2}$ can be found in \cite{DuggerKR}. 
\begin{thm}[Theorem 2.8 in \cite{DuggerKR}]\label{thm:HZcoeff}
Figure~\ref{fig:HZCoeff} shows the coefficient ring $H\underline{\mathbb{Z}}_{p + q\sigma}^{C_2}$.
\begin{figure}
\begin{center}
\makebox[\textwidth]{\includegraphics[trim={0cm 11cm 7cm 4cm},clip, scale = 0.7]{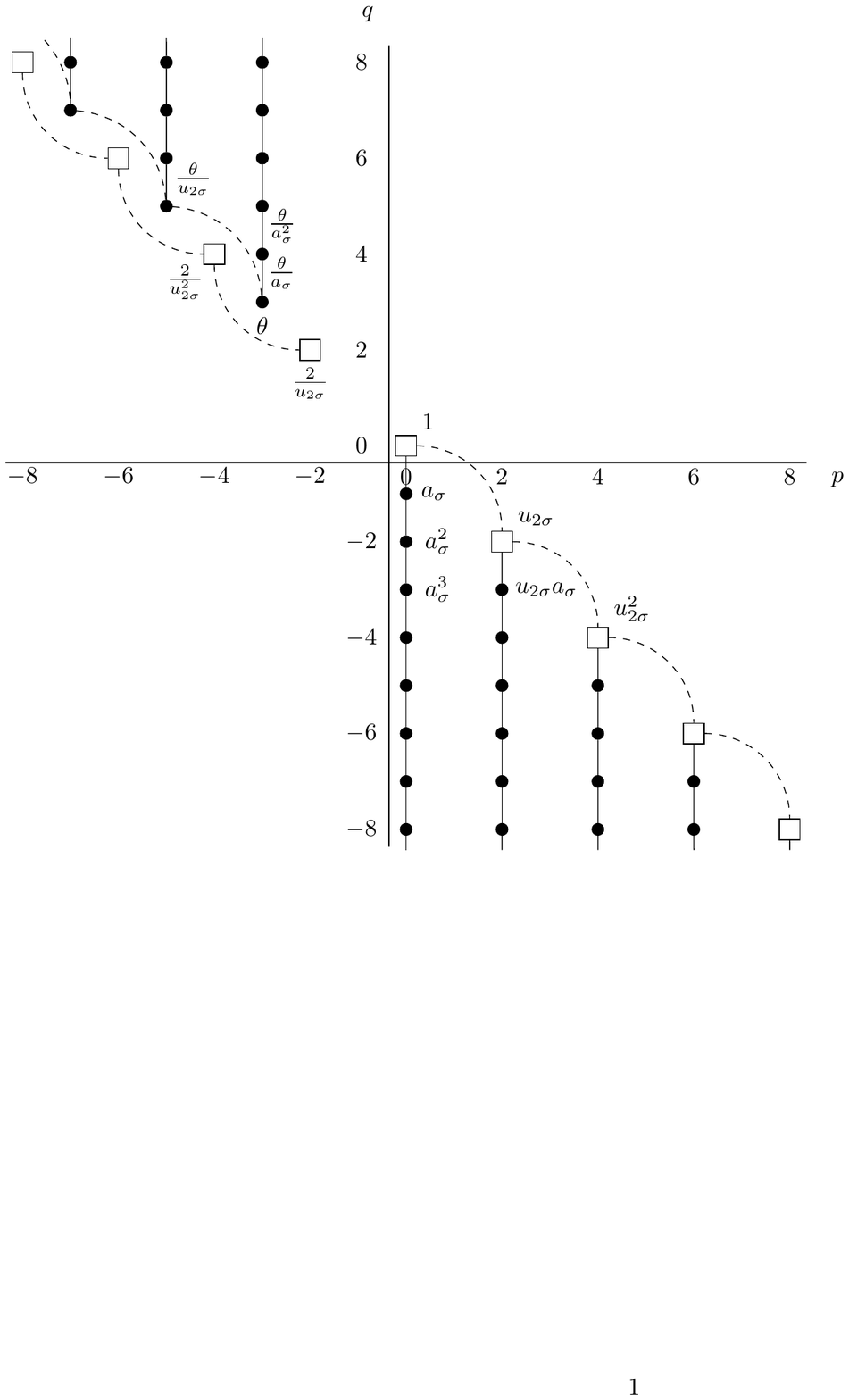}}
\end{center}
\begin{center}
\caption{Coefficient ring of $H\underline{\mathbb{Z}}_{p + q\sigma}^{C_2}$.  Multiplication by $a_\sigma$ are drawn with solid lines and multiplication by $u_{2\sigma}$ are drawn with dashed lines (not all multiplicative structures are drawn).}
\label{fig:HZCoeff}
\end{center}
\end{figure}
Its product structures are as follows: 
\begin{enumerate}
\item In the range $p \geq 0$, $H\mathbb{Z}_{p + q\sigma}^{C_2}$ is the polynomial algebra $\mathbb{Z}[u_{2\sigma}, a_\sigma]/(2a_\sigma)$.  
\item In the range $p < 0$, the class $\alpha = \frac{2}{u_{2\sigma}} \in H_{-2 + 2\sigma}^{C_2}$ is killed by $a_\sigma$ and is infinitely $u_{2\sigma}$ divisible; the class $\theta \in H_{-3 + 3\sigma}^{C_2}$ is killed by $u_{2\sigma}$ and $a_\sigma$ and it is infinitely $u_{2\sigma}$ divisible and $a_\sigma$ divisible.  
\end{enumerate}
\end{thm}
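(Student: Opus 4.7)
The plan is to compute $\pi_\bigstar^{C_2} \HZ$ by propagating information from the integer axis outward in the $(p, q)$-plane, using the Euler cofiber sequence
\[
C_{2+} \longrightarrow S^0 \xrightarrow{a_\sigma} S^\sigma
\]
as the main tool. Smashing with $\HZ$ and applying $\pi^{C_2}_V$ for $V = p + q\sigma$ yields the long exact sequence
\[
\pi^u_{p+q} \HZ \xrightarrow{\mathrm{tr}} \pi^{C_2}_V \HZ \xrightarrow{a_\sigma} \pi^{C_2}_{V - \sigma} \HZ \xrightarrow{\partial} \pi^u_{p + q - 1} \HZ.
\]
Because $\pi^u_n \HZ = \mathbb{Z}$ if $n = 0$ and $0$ otherwise, $a_\sigma$-multiplication is an isomorphism outside the two ``barrier lines'' $p + q = 0$ and $p + q = 1$, while along these lines the transfer $\mathbb{Z} \to \mathbb{Z}$ is multiplication by $2$ (since $\underline{\mathbb{Z}}$ is the constant Mackey functor).

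First I would nail down the positive cone $p \geq 0$. Starting from the known axis $\pi^{C_2}_p \HZ = \mathbb{Z}$ at $p = 0$ and $0$ otherwise, iterated $a_\sigma$-isomorphisms kill $\pi^{C_2}_{p + q\sigma} \HZ$ throughout the range $p \geq 2,\ p + q \geq 2$. The orientation class $u_{2\sigma} \in \pi^{C_2}_{2 - 2\sigma} \HZ$ supplies the first new generator just below the barrier $p + q = 1$; it exists because $2\sigma$ is an oriented $C_2$-representation. Along the barrier $p + q = 0$ the long exact sequence becomes
\[
\mathbb{Z} \xrightarrow{\ 2\ } \pi^{C_2}_0 \HZ = \mathbb{Z} \xrightarrow{a_\sigma} \pi^{C_2}_{-\sigma} \HZ \to 0,
\]
which gives $\pi^{C_2}_{-\sigma} \HZ = \mathbb{Z}/2$ generated by $a_\sigma$ and forces $2a_\sigma = 0$. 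Multiplication by $u_{2\sigma}$ now translates this picture by $(2, -2)$ without crossing any barrier, assembling the ring $\mathbb{Z}[u_{2\sigma}, a_\sigma]/(2a_\sigma)$ in the region $p \geq 0$.

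For the negative cone $p < 0$, I would mirror the Tate-square approach of Remark~\ref{rem:theta}: namely, compare $\HZ$ with its Borel completion $F(EC_{2+}, \HZ)$ and the geometric-fixed-point term $\widetilde{EC_2} \wedge \HZ$. Each of the three resulting spectra has $RO(C_2)$-graded homotopy that can be written down in closed form from group (co)homology of $C_2$ with $\mathbb{Z}$-coefficients. The class $\alpha \in \pi^{C_2}_{-2 + 2\sigma}\HZ$, satisfying $u_{2\sigma} \alpha = 2$ and $a_\sigma \alpha = 0$, emerges as the $\operatorname{coker}(\mathrm{tr}) = \mathbb{Z}/2$ piece along the $p + q = 0$ line after inverting $u_{2\sigma}$, or equivalently as the image of a generator of the Tate group under the boundary map. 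The element $\theta \in \pi^{C_2}_{-3 + 3\sigma}\HZ$ is then defined as $\theta := \partial(u_{2\sigma}^{-1} a_\sigma^{-1})$ from the same Tate square, in strict analogy with the $\theta$-construction of Remark~\ref{rem:theta}: as a boundary image it is automatically $u_{2\sigma}$- and $a_\sigma$-torsion, and infinite divisibility by $u_{2\sigma}$ and $a_\sigma$ follows because both become invertible in the Tate spectrum. Extending by these divisibilities fills out the entire negative cone.

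The main obstacle is careful bookkeeping on the two barrier lines $p + q \in \{0, 1\}$, where the Mackey-functor information (specifically the transfer being multiplication by $2$) is precisely what distinguishes $\HZ$ from $\Hm$ and produces $2a_\sigma = 0$ rather than $a_\sigma^2 = 0$. The Tate square organizes all of this transparently, reducing the $RO(C_2)$-graded calculation to elementary integral group (co)homology of $C_2$ together with verification that the multiplicative relations $u_{2\sigma}\alpha = 2$, $a_\sigma \alpha = 0$, $u_{2\sigma}\theta = a_\sigma\theta = 0$, and $2 a_\sigma = 0$ hold consistently across all four quadrants.
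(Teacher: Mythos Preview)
The paper does not prove this theorem at all; it is quoted verbatim from Dugger \cite{DuggerKR} and no argument is given beyond the citation. Your sketch therefore cannot be compared against a proof in the paper, but it is worth noting that the combination of the Euler cofiber sequence $C_{2+}\to S^0\to S^\sigma$ for the positive cone together with the Tate square for the negative cone is exactly the standard route (and is essentially what Dugger does), so you are reconstructing the cited argument rather than offering an alternative.

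One genuine slip: you describe $\alpha$ as ``the $\operatorname{coker}(\mathrm{tr})=\mathbb{Z}/2$ piece'' and ``the image of a generator of the Tate group under the boundary map.'' Neither is right. The group $\pi^{C_2}_{-2+2\sigma}H\underline{\mathbb{Z}}$ is a copy of $\mathbb{Z}$, not $\mathbb{Z}/2$, and $\alpha$ is its generator; since the Tate groups here are all $\mathbb{Z}/2$, anything in the image of the Tate boundary would be $2$-torsion, which $\alpha$ is not. The correct provenance of $\alpha$ is the homotopy-orbit term in the Tate square: along the antidiagonal $p+q=0$ with $p<0$ the map from homotopy orbits to $H\underline{\mathbb{Z}}$ is an isomorphism onto copies of $\mathbb{Z}$, and multiplication by $u_{2\sigma}$ carries the generator at $(-2,2)$ to $2\in\pi^{C_2}_0 H\underline{\mathbb{Z}}$, whence the name $2/u_{2\sigma}$. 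Your description of $\theta$ via the Tate boundary, by contrast, is correct and exactly parallels the paper's Remark~\ref{rem:theta}. This is a local bookkeeping error rather than a flaw in the method; once you place $\alpha$ on the orbit side of the Tate square, the rest of your outline goes through.
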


Proposition~\ref{prop:BPRSlices} and Theorem~\ref{thm:HZcoeff} enable us to compute the $E_2$-page of the $RO(C_2)$-graded slice spectral sequence of $BP_\mathbb{R}$.  In particular, the positive part is the polynomial algebra $\mathbb{Z}[\bar{v}_i, u_{2\sigma}, a_\sigma]/(2a_\sigma)$ with 
$$\begin{array}{rll} 
|\bar{v}_i| &=& (0, (2^i -1) + (2^i -1)\sigma),\\ 
|u_{2\sigma}| &=& (0, 2 - 2 \sigma),\\ 
|a_\sigma| &=& (1, 1 - \sigma). 
\end{array}$$

The $E_2$-page of the integer graded slice spectral sequence is the sub-algebra consisting of all the elements that have integer degrees in $t -s$.  It is concentrated in the first quadrant with a vanishing line of slope 1.  
  
\begin{prop}\label{prop:BPRSliceSS}
In the $RO(C_2)$-grade slice spectral sequence for $\pi_\bigstar^{C_2}BP_\mathbb{R}$, $a_\sigma$ and $\bar{v}_i$ are permanent cycles.  The differentials $d_i (u_{2\sigma}^{2^{k-1}})$ are zero for $i < 2^{k+1} -1$, and 
$$d_{2^{k+1} -1} (u_{2\sigma}^{2^{k-1}}) =  \bar{v}_ka_\sigma^{2^{k+1}-1}. $$
\end{prop}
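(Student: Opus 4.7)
The plan is to verify the three assertions in order: that $a_\sigma$ and the $\bar v_i$ are permanent cycles, that $u_{2\sigma}^{2^{k-1}}$ supports no differentials shorter than $d_{2^{k+1}-1}$, and that its first nonzero differential is precisely the stated one.

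First, I would observe that the permanent-cycle claim for $a_\sigma$ and $\bar v_i$ is essentially built into the construction. The class $a_\sigma \in \pi_{-\sigma}^{C_2} S^0$ is represented by an honest equivariant map of spheres, and its image along the unit $S^0 \to BP_\mathbb{R}$ lives in slice filtration zero; any class in filtration zero that comes from an actual equivariant homotopy class of the target spectrum must be a permanent cycle, because it survives by definition of the slice tower. Each $\bar v_i$ is provided by the $A_\infty$-refinement $S^0[\bar v_1, \bar v_2, \ldots] \to BP_\mathbb{R}$ discussed before Proposition~\ref{prop:BPRSlices}, so it too is a genuine equivariant homotopy class of $BP_\mathbb{R}$, and hence a permanent cycle.

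Next I would rule out shorter differentials on $u_{2\sigma}^{2^{k-1}}$ by a degree count on the $E_2$-page. Using the bidegrees $|u_{2\sigma}| = (0, 2-2\sigma)$, $|a_\sigma| = (1, 1-\sigma)$, and $|\bar v_i| = (0, (2^i-1)\rho_2)$, the class $u_{2\sigma}^{2^{k-1}}$ sits in Adams bidegree $(0, 2^k - 2^k\sigma)$, so a $d_r$ target must lie in $(r, 2^k - 2^k\sigma + r - 1)$. Combining Proposition~\ref{prop:BPRSlices} with Theorem~\ref{thm:HZcoeff}, the positive part of the $E_2$-page is the polynomial algebra $\mathbb Z[\bar v_i, u_{2\sigma}, a_\sigma]/(2a_\sigma)$, and a bidegree-by-bidegree check shows that no monomial $\bar v^I a_\sigma^j u_{2\sigma}^\ell$ of the prescribed weight exists for $1 \leq r < 2^{k+1}-1$. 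Hence $d_r(u_{2\sigma}^{2^{k-1}}) = 0$ in that range.

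Finally, for the nontrivial differential I would proceed by induction on $k$. The base case $k=1$ is the Hu--Kriz slice differential $d_3(u_{2\sigma}) = \bar v_1 a_\sigma^3$ for $K\mathbb{R}$, which is inherited by $BP_\mathbb{R}$ through the Real orientation. For the inductive step, I would invoke the Slice Differentials Theorem of Hill--Hopkins--Ravenel (\cite[Theorem~9.9]{HHRKervaire}) specialised to $C_2$, which asserts exactly the stated formula $d_{2^{k+1}-1}(u_{2\sigma}^{2^{k-1}}) = \bar v_k a_\sigma^{2^{k+1}-1}$; alternatively, once the degree analysis in the previous paragraph identifies the unique possible target up to the $\mathbb F_2$-scalar, one can pin down that the scalar is $1$ by reducing modulo $(2, \bar v_1, \ldots, \bar v_{k-1}, \bar v_{k+1}, \ldots)$ and comparing with the $K\mathbb R$-like quotient where the $k=1$ argument applies to the generator $\bar v_k$.

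The main obstacle is this last step. Permanence of $a_\sigma$ and $\bar v_i$ is immediate, and the vanishing of shorter differentials is bookkeeping on the $E_2$-page; the substantive input is the identification of the first nonzero differential as precisely $\bar v_k a_\sigma^{2^{k+1}-1}$. This requires either citing the full HHR Slice Differentials Theorem or reproducing the geometric fixed point argument (using the Reduction Theorem to interpret $u_{2\sigma}^{2^{k-1}}$ and $\bar v_k$ in $\Phi^{C_2}$) to verify nontriviality of the scalar. Everything else reduces to this core input.
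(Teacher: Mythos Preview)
Your proposal is correct and ultimately rests on the same input as the paper: the paper's entire proof is the single sentence that this is a special case of Hill--Hopkins--Ravenel's Slice Differential Theorem \cite[Theorem~9.9]{HHRKervaire} applied to $G=C_2$. Your additional scaffolding---the separate argument for permanence of $a_\sigma$ and $\bar v_i$, the degree count ruling out shorter differentials, and the inductive structure with $K\mathbb{R}$ as a base case---is sound but unnecessary, since HHR's theorem already packages all three assertions at once; in particular there is no need for an induction on $k$ once you are willing to cite that theorem.
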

\begin{proof}
This is a special case of Hill--Hopkins--Ravenel's Slice Differential Theorem (\cite[Theorem~9.9]{HHRKervaire}), applied to when $G = C_2$.   
\end{proof}

In Figure~\ref{fig:BPRKRSSS}--\ref{fig:BPRER3SSS}, we draw the first three sets of differentials of the integer-graded slice spectral sequence.  To organize this information in a clean way, we have disassembled the spectral sequence into ``stages'', corresponding to the differentials $d_3$, $d_7$, $d_{15}$, $\ldots$.  At each stage, the important surviving torsion elements are shown.  Many classes with low filtrations (i.e., those on the 0-line) are not drawn because they are not torsion, and hence won't be important for the purpose of this paper. 

\begin{figure}
\begin{center}
\makebox[\textwidth]{\includegraphics[trim={4.75cm 14.5cm 11cm 2cm}, clip, scale = 0.9]{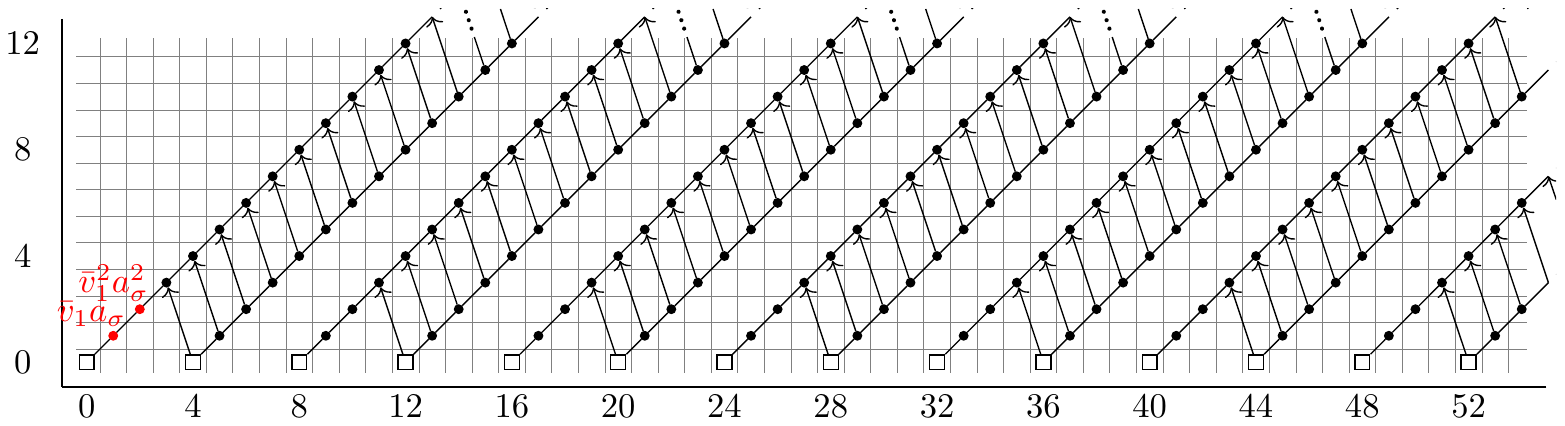}}
\end{center}
\begin{center}
\caption{Important $d_3$-differentials and surviving torsion classes in $\SliceSS(BP_\mathbb{R}).$}
\label{fig:BPRKRSSS}
\end{center}
\end{figure}

\begin{figure}
\begin{center}
\makebox[\textwidth]{\includegraphics[trim={0cm 7cm 3cm 2cm},clip, scale = 0.75]{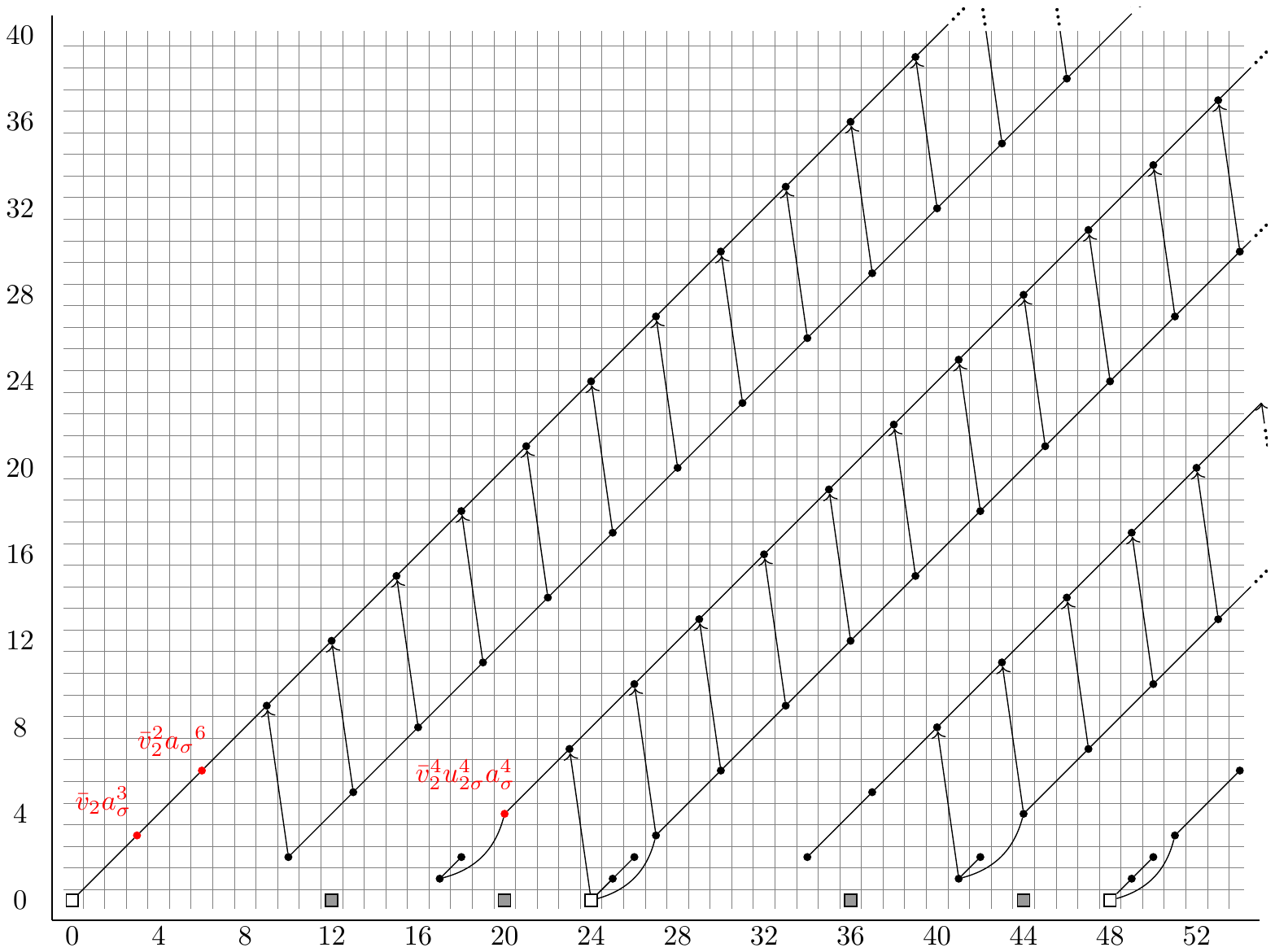}}
\end{center}
\begin{center}
\caption{Important $d_7$-differentials and surviving torsion classes in $\SliceSS(BP_\mathbb{R})$.}
\label{fig:BPRER2SSS}
\end{center}
\end{figure}

\begin{figure}
\begin{center}
\makebox[\textwidth]{\includegraphics[trim={0cm 7cm 3cm 2cm},clip, scale = 0.75]{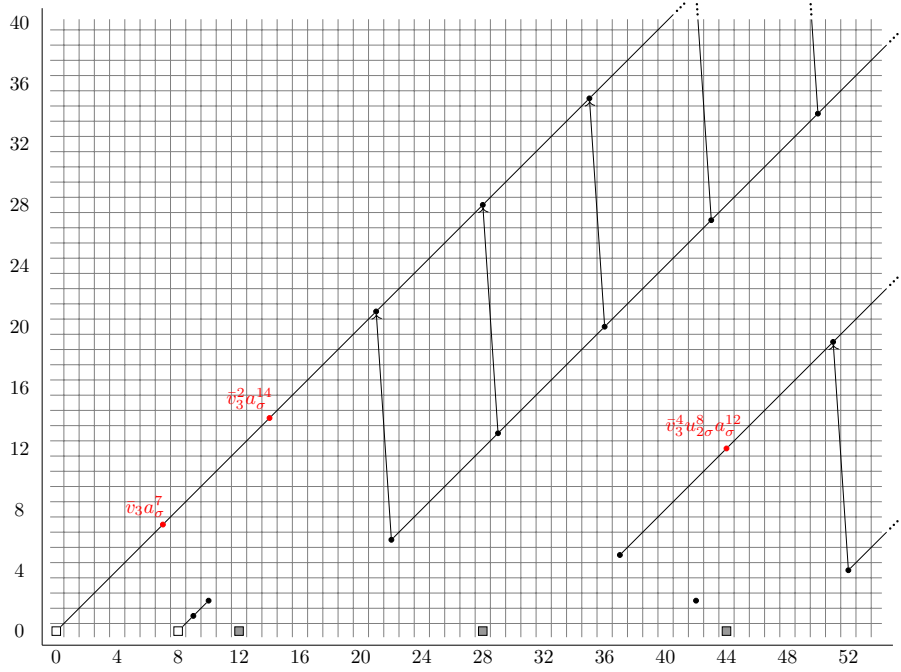}}
\end{center}
\begin{center}
\caption{Important $d_{15}$-differentials and surviving torsion classes in $\SliceSS(BP_\mathbb{R})$.}
\label{fig:BPRER3SSS}
\end{center}
\end{figure}

\subsection{$\SliceSS(BP_\mathbb{R}) \to \HFPSS(BP_\mathbb{R})$}   \label{sec:SliceToHFPSS}

The homotopy fixed point spectral sequence of $BP_\mathbb{R}$ is also going to be useful to us.  It is a spectral sequence that computes the $C_2$-equivariant homotopy groups of $F(E{C_2}_+, BP_\mathbb{R})$.  The integer-graded homotopy fixed point spectral sequence for $BP_\mathbb{R}$ is 
$$E_2^{s, t} = H^s(C_2, \pi_{t}^uBP_\mathbb{R}) \Longrightarrow \pi_{t-s}^{C_2} F(E{C_2}_+, BP_\mathbb{R}).$$
Just like the slice spectral sequence, there is also an $RO(C_2)$-graded version of this, with $E_2$-page
$$E_2^{s, V} = H^s(C_2; \pi_0(S^{-V} \wedge BP_\mathbb{R})) \Longrightarrow \pi_{V-s}^{C_2}F(E{C_2}_+, BP_\mathbb{R}).$$
For a more general discussion of the $RO(G)$-graded homotopy fixed point spectral sequence for any equivariant $G$-spectrum $X$, see \cite[Section 2.3]{HillMeier}.  By \cite[Corollary 4.7]{HillMeier}, the $E_2$-page of the $RO(C_2)$-graded homotopy fixed point spectral sequence of $BP_\mathbb{R}$ is isomorphic to the polynomial algebra 
$$\mathbb{Z}[\bar{v}_i, u_{2\sigma}^\pm, a_\sigma]/(2a_\sigma).$$
The differentials are given by 
\begin{eqnarray}
d_{2^{k+1}-1}(u_{2\sigma}^{2^{k-1}}) &=& \bar{v}_ka_\sigma^{2^{k+1}-1}, \label{eqn:HFPSSDiff}  \\ 
d_{2^{k+1}-1}(u_{2\sigma}^{-2^{k-1}}) &=& d_{2^{k+1}-1}(u_{2\sigma}^{-2^k}\cdot u_{2\sigma}^{2^{k-1}}) \nonumber \\ 
&=& u_{2\sigma}^{-2^k}  d_{2^{k+1}-1}(u_{2\sigma}^{2^{k-1}}) \nonumber \\ 
&=&\bar{v}_ku_{2\sigma}^{-2^k}a_\sigma^{2^{k+1}-1}. \nonumber
\end{eqnarray}
They can be obtained by equivariant primary cohomology operations (see \cite[Lemma 3.34]{HuKriz}).  The readers might have noticed at this point that the differentials on the positive powers of $u_{2\sigma}$ are the same as the differentials in the slice spectral sequence.  Indeed, there is a map $\SliceSS(BP_\mathbb{R}) \to \HFPSS(BP_\mathbb{R})$ that induces an isomorphism in a certain range.  

To explain this map of spectral sequences, we will first construct a map of towers.  Let $X$ be a $C_2$-spectrum.  Let $\mathcal{S}^S_{>n}$ denote the localizing subcategory generated by all the slice cells of dimension $> n$, and $\mathcal{S}^P_{>n}$ the localizing subcategory generated by all the spheres of dimension $> n$.  When $n \geq 0$, $\mathcal{S}^P_{> n} \subseteq \mathcal{S}^S_{>n}$, and this gives a natural map of towers 
$$\text{Post}^\bullet(X) \to P^\bullet(X)$$
from the Postnikov tower of $X$ to the slice tower of $X$.  Non-equivariantly, this map is an isomorphism, because the slice tower is the Postnikov tower when we forget the $C_2$-action.  It follows that after taking $F(E{C_2}_+, -)$ to both towers, the horizontal map in the following diagram is an isomorphism: 
$$\begin{tikzcd}
F(E{C_2}_+, \text{Post}^\bullet(X)) \ar[r,
"\cong"] &F(E{C_2}_+, P^\bullet(X))\\
& P^\bullet(X) \ar[u]. 
\end{tikzcd}$$
The top-left tower, $F(E{C_2}_+, \text{Post}^\bullet(X))$, is the tower for constructing the homotopy fixed point spectral sequence.  It follows that the vertical map induces a map of $RO(C_2)$-graded spectral sequences:
$$\SliceSS(X) \to \text{HFPSS}(X).$$

\begin{prop}\label{SliceHFPSS}
When $X = BP_\mathbb{R}$, the map $\SliceSS(BP_\mathbb{R}) \to \text{HFPSS}(BP_\mathbb{R})$, considered as a map of integer graded spectral sequences, induces an isomorphism on the $E_2$-page on or below the line of slope 1. 
\end{prop}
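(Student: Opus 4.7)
The plan is to identify the map of $E_2$-pages explicitly using the polynomial descriptions available on both sides, and then verify by a direct degree computation that an integer-graded monomial lies on or below the line of slope $1$ if and only if it involves a non-negative power of $u_{2\sigma}$.

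First, I would recall the construction of the comparison map described just before the proposition: the inclusion of localizing subcategories $\mathcal{S}^P_{>n} \subseteq \mathcal{S}^S_{>n}$ for $n \geq 0$ gives a natural map $\text{Post}^\bullet(BP_\mathbb{R}) \to P^\bullet(BP_\mathbb{R})$, and after applying $F(E{C_2}_+,-)$ to the Postnikov tower (which doesn't change it, since $E{C_2}_+$-function-spectra only remember the underlying spectrum and the underlying slice tower agrees with the Postnikov tower) we obtain the desired map of towers, hence a map of $RO(C_2)$-graded spectral sequences $\SliceSS(BP_\mathbb{R}) \to \HFPSS(BP_\mathbb{R})$.

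Second, I would match up the $E_2$-pages. On the slice side, Proposition~\ref{prop:BPRSliceSS}, together with the Slice Theorem and Theorem~\ref{thm:HZcoeff}, identifies the integer-graded portion of the $E_2$-page with the positive part of $\mathbb{Z}[\bar{v}_i, u_{2\sigma}, a_\sigma]/(2a_\sigma)$; on the HFPSS side, the $E_2$-page is the integer-graded part of $\mathbb{Z}[\bar{v}_i, u_{2\sigma}^\pm, a_\sigma]/(2a_\sigma)$. The comparison map sends named generators to named generators: $\bar{v}_i$ maps to $\bar{v}_i$ because both lifts arise from the fixed multiplicative refinement $S^0[\bar{v}_1,\bar{v}_2,\dots]\to BP_\mathbb{R}$; $a_\sigma$ is a universal class coming from $S^0 \hookrightarrow S^\sigma$; and $u_{2\sigma}$ is the integral orientation class for $\sigma^{\oplus 2}$, which is visible in both the slice layer $H\underline{\mathbb{Z}}$ and in $F(E{C_2}_+, H\underline{\mathbb{Z}})$. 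Thus on $E_2$-pages the comparison map is exactly the tautological inclusion.

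Third, I would show this inclusion is an isomorphism on or below the slope-$1$ line. A monomial $\bar{v}^I u_{2\sigma}^b a_\sigma^c$ has bidegree
\[
(s,V) = \left(c,\; 2b + c + \sum_k i_k(2^k-1) \; + \; \bigl(\sum_k i_k(2^k-1) - 2b - c\bigr)\sigma\right),
\]
so integer-gradedness is the constraint $c + 2b = \sum_k i_k(2^k-1)$. Substituting gives stem $t-s = 2b + \sum_k i_k(2^k-1)$ and filtration $s = c$, hence $(t-s) - s = 4b$. So the class lies on or below the diagonal $s = t-s$ precisely when $b \geq 0$, which is exactly the condition for it to lie in the subring $\mathbb{Z}[\bar{v}_i, u_{2\sigma}, a_\sigma]/(2a_\sigma)$ describing the slice $E_2$-page.

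The main obstacle is the second step: one must confirm that the comparison map really does identify the named generators on both sides. This requires tracing through the construction of the slice tower and the coefficient computation of $H\underline{\mathbb{Z}}^{C_2}_\bigstar$, and using that the generators $u_{2\sigma}$ and $a_\sigma$ are defined by universal Thom/Euler class constructions stable under any tower built from $H\underline{\mathbb{Z}}$-modules. Once this identification is in hand, the degree bookkeeping above finishes the proof.
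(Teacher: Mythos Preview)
Your argument is correct, and the degree bookkeeping in your third step is a clean way to see exactly where the cutoff occurs: the identity $(t-s)-s=4b$ for an integer-graded monomial $\bar{v}^I u_{2\sigma}^b a_\sigma^c$ is spot-on.

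That said, your route differs from the paper's. The paper does not write out both $E_2$-pages as polynomial rings and compare monomials. Instead it argues slice-section by slice-section: the map on $E_2$-terms is induced by $P^{2t}_{2t}BP_\mathbb{R}\to F(E{C_2}_+,P^{2t}_{2t}BP_\mathbb{R})$, which after unwinding the slice description becomes the coefficient-level map $\pi^{C_2}_{(t-s)-t\sigma}H\underline{\mathbb{Z}}\to \pi^{C_2}_{(t-s)-t\sigma}F(E{C_2}_+,H\mathbb{Z})$. Lemma~\ref{lem:coeffHc} then says this is an isomorphism precisely when the integer coordinate $t-s$ is nonnegative, which for $s\geq 0$ is the region on or below the line of slope~1. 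This sidesteps exactly the obstacle you flagged: you never need to argue that the comparison map matches up the named generators $\bar{v}_i$, $u_{2\sigma}$, $a_\sigma$ on the two sides, because the whole comparison is packaged into the single map $H\underline{\mathbb{Z}}\to F(E{C_2}_+,H\underline{\mathbb{Z}})$ and its known isomorphism range. Your approach buys explicitness (you see the exact monomials on the boundary line), while the paper's buys economy and avoids tracking generators through the tower construction---in particular, Lemma~\ref{lem:coeffHc} is precisely the input you would cite to nail down your second step.
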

\begin{proof}

The map of sections
$$P^{2t}_{2t} BP_\mathbb{R} \to F(E{C_2}_+, P^{2t}_{2t} BP_\mathbb{R})$$
is the map $X \to F(E{C_2}_+, X)$ induced by the collapse $E{C_2}_+ \to S^0$.  To prove the desired isomorphism, it suffices to show that the map 
$$\pi_{2t-s}^{C_2}(S^{t\rho_2} \wedge H\underline{\mathbb{Z}}) \to \pi_{2t-s}^{C_2} F(E{C_2}_+,S^{t\rho_2} \wedge H\underline{\mathbb{Z}})$$
is an isomorphism for all $2t-s \geq s$, or $t \geq s$.  This is equivalent to showing that the map
$$\pi_{(t - s) - t\sigma}^{C_2}  H \underline{\mathbb{Z}} \to \pi_{(t - s) - t\sigma}^{C_2}F(E{C_2}_+, H\underline{\mathbb{Z}}) \cong \pi_{(t - s) - t\sigma}^{C_2}F(E{C_2}_+, H\mathbb{Z}) $$
is an isomorphism for all $t \geq s \geq 0$, which is true by Lemma~\ref{lem:coeffHc}.
\end{proof}

\begin{lem}\label{lem:coeffHc}
The coefficient ring $\pi_\bigstar^{C_2}F(E{C_2}_+, H\mathbb{Z})$ is the polynomial algebra 
$$\mathbb{Z}[u_{2_\sigma}^\pm, a_\sigma]/(2a_\sigma). $$
The map 
$$\pi_{p + q\sigma}^{C_2} \HZ \to \pi_{p + q\sigma}^{C_2} F(E{C_2}_+, \HZ) \cong \pi_{p + q\sigma}^{C_2} F(E{C_2}_+, H\mathbb{Z}),$$
is an isomorphism when $p \geq 0$, sending $u_{2\sigma} \mapsto u_{2\sigma}$, $a_\sigma \mapsto a_\sigma$, and zero when $p< 0$.  
\end{lem}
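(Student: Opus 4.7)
The plan is to identify both sides as Borel cohomology of a point and to compare them explicitly. I begin by noting that the unit map $H\mathbb{Z} \to H\underline{\mathbb{Z}}$ is a non-equivariant equivalence (both have underlying spectrum $H\mathbb{Z}$), hence induces an equivalence $F(EC_{2+}, H\mathbb{Z}) \xrightarrow{\simeq} F(EC_{2+}, H\underline{\mathbb{Z}})$ of $C_2$-spectra, since applying $F(EC_{2+}, -)$ only depends on the underlying $C_2$-spectrum. This justifies the isomorphism stated in the lemma and allows me to compute the right-hand side using the $RO(C_2)$-graded homotopy fixed point spectral sequence for $H\mathbb{Z}$.

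The $E_2$-page of this spectral sequence is $E_2^{s, V} = H^s(C_2; \pi_V^u H\mathbb{Z})$. For $V = p + q\sigma$ with underlying dimension $p + q$, the coefficient group $\pi_V^u H\mathbb{Z}$ is zero unless $p + q = 0$, in which case it is $\mathbb{Z}$ with $C_2$-action by $(-1)^q$ (the sign coming from the $q\sigma$ summand). Thus the $E_2$-page contributing to $\pi^{C_2}_{p+q\sigma}$ is concentrated in a single bidegree with $s = -p - q$, and equals $H^{-p-q}(C_2; \mathbb{Z}^{(-1)^q})$. Using the standard computation $H^*(C_2; \mathbb{Z}) = \mathbb{Z}[x]/(2x)$ with $|x|=2$ and $H^*(C_2; \mathbb{Z}_{\mathrm{sgn}}) = \mathbb{Z}/2$ in positive odd degrees, this piece is nonzero precisely when $p$ is even and $p + q \leq 0$, giving $\mathbb{Z}$ when $p + q = 0$ and $\mathbb{Z}/2$ when $p + q < 0$. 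Since each bidegree has a single contribution, the spectral sequence collapses, and matching against the monomial basis $\{u_{2\sigma}^j a_\sigma^k : j \in \mathbb{Z},\, k \geq 0\}$ of $\mathbb{Z}[u_{2\sigma}^\pm, a_\sigma]/(2a_\sigma)$ shows that this ring exhausts the answer. The classes $u_{2\sigma}$ and $a_\sigma$ are the detected images of the corresponding stable classes under the unit $\mathbb{S} \to F(EC_{2+}, H\mathbb{Z})$.

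For the second assertion, naturality of the unit map with respect to $H\underline{\mathbb{Z}} \to F(EC_{2+}, H\mathbb{Z})$ shows that $u_{2\sigma} \mapsto u_{2\sigma}$ and $a_\sigma \mapsto a_\sigma$. Since Theorem~\ref{thm:HZcoeff} identifies the $p \geq 0$ part of $H\underline{\mathbb{Z}}_{p+q\sigma}$ with the polynomial algebra $\mathbb{Z}[u_{2\sigma}, a_\sigma]/(2a_\sigma)$, and the $p \geq 0$ part of the target is the same polynomial subalgebra (the negative powers of $u_{2\sigma}$ all sit in $p < 0$), the ring map is an isomorphism in this range. For the $p < 0$ case, I will use the fact from Theorem~\ref{thm:HZcoeff} that the entire negative cone of $H\underline{\mathbb{Z}}_\bigstar$ is generated over the polynomial part by $\alpha$ and $\theta$, both of which are $a_\sigma$-torsion; consequently every element with $p < 0$ is $a_\sigma^N$-torsion for some $N$. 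On the other hand, multiplication by $a_\sigma$ is injective on $\pi^{C_2}_\bigstar F(EC_{2+}, H\mathbb{Z}) = \mathbb{Z}[u_{2\sigma}^\pm, a_\sigma]/(2a_\sigma)$, as one checks directly on basis monomials. The only step requiring care is the correct bookkeeping of the twist $(-1)^q$ in the HFPSS; once that is handled, the vanishing on $p < 0$ follows formally from the $a_\sigma$-torsion freeness of the Borel completion.
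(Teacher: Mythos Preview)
Your computation of $\pi_\bigstar^{C_2} F(EC_{2+}, H\mathbb{Z})$ via the collapsing homotopy fixed point spectral sequence is correct and is exactly the approach the paper sketches (identifying the answer with $H^*(C_2; \bigoplus_r \text{sgn}^{\otimes r})$). Your argument for the isomorphism in the range $p \geq 0$ is likewise correct and matches the paper.

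Your argument for the vanishing when $p<0$, however, contains a genuine gap: multiplication by $a_\sigma$ is \emph{not} injective on $\mathbb{Z}[u_{2\sigma}^\pm, a_\sigma]/(2a_\sigma)$. Checking on basis monomials is not enough, because the monomials $u_{2\sigma}^j$ generate copies of $\mathbb{Z}$, and $a_\sigma \cdot (2u_{2\sigma}^j) = 2a_\sigma u_{2\sigma}^j = 0$ while $2u_{2\sigma}^j \neq 0$. In fact the stated vanishing is simply false: the class $\alpha = 2/u_{2\sigma} \in \pi_{-2+2\sigma}^{C_2} H\underline{\mathbb{Z}}$ has infinite order and satisfies $u_{2\sigma}\alpha = 2$, so under any ring map sending $u_{2\sigma} \mapsto u_{2\sigma}$ and $1 \mapsto 1$ it must go to $2u_{2\sigma}^{-1} \neq 0$. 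This is entirely consistent with your (correct) observation that $\alpha$ is $a_\sigma$-torsion, since $2u_{2\sigma}^{-1}$ is also $a_\sigma$-torsion in the target. The $\theta$-classes, by contrast, do map to zero, for the trivial reason that they live in odd $p$ where the target vanishes. The paper's own proof does not actually address the $p<0$ claim, and Proposition~\ref{SliceHFPSS} only invokes the isomorphism for $p \geq 0$, so nothing downstream is affected; but the ``zero when $p<0$'' clause of the lemma is not correct as written, and no argument along your lines can establish it.
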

\begin{proof}
This is a standard computation.  We refer the readers to \cite[Corollary 4.7]{HillMeier} and \cite[Appendix B]{DuggerKR} for more details than what is written here.  The key observation is that computing $\pi_\bigstar^{C_2}F(E{C_2}_+, H\mathbb{Z})$ is equivalent to computing $H^*(C_2; \oplus_{r \in \mathbb{Z}} \, \text{sgn}^{\otimes r})$, where $\text{sgn}$ is the integral sign representation.  The elements $u_{2\sigma} \in \pi^{C_2}_{2 - 2\sigma} H\underline{\mathbb{Z}}$ and $a_\sigma \in \pi^{C_2}_{-\sigma} H\underline{\mathbb{Z}}$, under the map $H\underline{\mathbb{Z}} \to F(E{C_2}_+, H\mathbb{Z})$, become the elements $u_{2\sigma} \in H^0(C_2; \text{sgn}^{\otimes 2})$ and $a_\sigma \in H^1(C_2; \text{sgn})$.
\end{proof}

\begin{rem}\rm
In \cite[Section I.9]{UllmanThesis}, Ullman proves a general isomorphism result for any $G$-spectrum $X$.  When $G = C_2$, his isomorphism range is the region \textit{slightly below} the line of slope 1.  In our case, however, $BP_\mathbb{R}$ has pure and isotropic slices and nonnegative slice sections.  When this happens, we can extend his isomorphism range to be \textit{on or below} the line of slope 1.  Since this line is also the vanishing line of $\SliceSS(BP_\mathbb{R})$, the map $\SliceSS(BP_\mathbb{R}) \to \HFPSS(BP_\mathbb{R})$ is an inclusion on the $E_2$-page. \end{rem}

\begin{rem}\rm
Using Lemma~\ref{lem:coeffHc}, one can further show that the map of $RO(C_2)$-graded spectral sequences
$$\begin{tikzcd}
\SliceSS(BP_\mathbb{R}) \ar[d] \ar[r, Rightarrow] & \pi_\bigstar^{C_2} BP_\mathbb{R} \ar[d, "\cong"] \\ 
\HFPSS(BP_\mathbb{R}) \ar[r, Rightarrow] & \pi_\bigstar^{C_2} F(E{C_2}_+, BP_\mathbb{R})
\end{tikzcd}$$
is an inclusion on the $E_2$-page and an isomorphism in this range.  It turns out that in $\HFPSS(BP_\mathbb{R})$, everything outside of this isomorphism range dies by the differentials in ~(\ref{eqn:HFPSSDiff}).  As a result, we obtain an equivalence $BP_\mathbb{R} \simeq F(E{C_2}_+, BP_\mathbb{R})$.  This is called the strong completion theorem (or the homotopy fixed point theorem) of $BP_\mathbb{R}$.  It is proved by Hu and Kriz in \cite[Theorem 4.1]{HuKriz}.  As noted in their paper, the homotopy fixed point theorem holds for $MU_\mathbb{R}$ as well, but not for $BP_\mathbb{R}\langle n \rangle$.  It fails for $BP_\mathbb{R}\langle n \rangle$ because not everything outside the isomorphism range dies in $\HFPSS(BP_\mathbb{R}\langle n \rangle)$.  In particular, $\pi_* BP_\mathbb{R}\langle n \rangle ^{hC_2}$ is not bounded below.   
\end{rem}



\section{The Equivariant May Spectral Sequence} \label{sec:EquivMaySS}

We will now construct the equivariant May spectral sequence of $BP_\mathbb{R}$ by filtering the equivariant dual Steenrod algebra. 

\subsection{The equivariant May spectral sequence with respect to $A^m_{\bigstar}$}\label{sec:GenuineEMay}
Recall from Section~\ref{EquivariantASS} that for a $C_2$-spectrum $X$ with good properties, its equivariant Adams spectral sequence with respect to $(\Hm_\bigstar, A^m_{\bigstar})$ has $E_2$-page
$$E_2 = \Ext_{A^m_{\bigstar}}(\Hm_\bigstar, \Hm_\bigstar X) \Longrightarrow (\pi_\bigstar^{C_2}X)_2^{\wedge}.$$
To compute the Ext-groups on the $E_2$-page, we filter the Hopf algebroid $(\Hm_\bigstar, A^m_{\bigstar})$ by powers of the ideal $(a_\sigma)$ and set the filtration of the element $\xi_i^{2^j}$ to be $2^{j+1} -1$.  We also filter the $(\Hm_\bigstar, A^m_{\bigstar})$-comodule $\Hm_\bigstar X$ to make it compatible with the filtration on $(\Hm_\bigstar, A^m_{\bigstar})$.  

\begin{df}\rm
The spectral sequence 
$$E_1 = \Ext_{\text{gr}_\bullet A^m_{\bigstar}}(\text{gr}_\bullet \Hm_\bigstar, \text{gr}_\bullet \Hm_\bigstar X) \Longrightarrow \Ext_{A^m_{\bigstar}}(\Hm_\bigstar, \Hm_\bigstar X)$$
is the \textit{$C_2$-equivariant May spectral sequence of $X$ with respect to $(\Hm_\bigstar, A^m_{\bigstar})$}.  It is abbreviated by $\EMaySS_{A^m_{\bigstar}}(X)$.    
\end{df}

We are interested in the case when $X = BP_\mathbb{R}$.  In this case, the $E_2$-page of the equivariant Adams spectral sequence simplifies:
$$E_2 = \Ext_{A^m_{\bigstar}}(\Hm_\bigstar, \Hm_\bigstar BP_\mathbb{R}) =  \Ext_{\Lambda^m_\bigstar}(\Hm_\bigstar, \Hm_\bigstar) \Longrightarrow (\pi_\bigstar^{C_2} BP_\mathbb{R})_2^{\wedge}. $$
Here, $\Lambda^m_\bigstar = \pi_\bigstar^{C_2} \left( \Hm \wedge_{BP_\mathbb{R}} \Hm \right)$, and there is a quotient map 
$$A^m_{\bigstar} \to \Lambda^m_\bigstar,$$
which quotients out the $\xi_i$-generators.  The filtration on $A_\bigstar^m$ induces a filtration on $\Lambda_\bigstar^m$, which is also by powers of $(a_\sigma)$.  It follows that the equivariant May spectral sequence for $BP_\mathbb{R}$ has $E_1$-page
$$E_1 = \Ext_{\text{gr}_\bullet \Lambda^m_\bigstar}(\text{gr}_\bullet \Hm_\bigstar, \text{gr}_\bullet \Hm_\bigstar) \Longrightarrow \Ext_{\Lambda^m_\bigstar}(\Hm_\bigstar, \Hm_\bigstar).$$
To compute this $E_1$-page and its differentials, we use Proposition~\ref{HopfLambda}.  Denote the corresponding class of $\tau_i$ in the cobar complex $C_{\Lambda^m_\bigstar}(\Hm_\bigstar)$ by $w_i := [\tau_i]$.  

\begin{prop}\label{prop:genuineMayE2}
The $E_1$-page of $\EMaySS_{\mathcal{A}_\bigstar^m}(BP_\mathbb{R})$ is the polynomial ring $\Hm_\bigstar[w_i]$.  The positive part of the $E_1$-page (the elements in stems $p+ q\sigma$ with $p \geq 0$) is the polynomial ring 
$$\mathbb{Z}/2[u_\sigma, a_\sigma]\left[ w_i\right].$$

The filtration of each element is given in Table~\ref{tab:table1}. 
\begin{table}[h]
\centering
\begin{tabular}{cccc}
\toprule 
& stem & Adams filtration & May filtration \\ 
\midrule 
$w_i = {[}\tau_i {]}$ & $(2^i -1)\rho_{C_2}$ & $1$ & $0$ \\
$a_\sigma$ & $-\sigma$ & $0$ & $1$\\
$u_\sigma$ & $1 - \sigma$ & $0$ & $0$ \\ 
\bottomrule \\
\end{tabular}
\caption{Adams and May filtrations of elements.}
\label{tab:table1}
\end{table}
\end{prop}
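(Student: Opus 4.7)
The plan is to identify the associated graded Hopf algebroid $(\mathrm{gr}_\bullet \Hm_\bigstar, \mathrm{gr}_\bullet \Lambda^m_\bigstar)$ and then compute its Ext by the standard formula for exterior Hopf algebras. Starting from the presentation of $\Lambda^m_\bigstar$ in Proposition~\ref{HopfLambda},
$$\Lambda^m_\bigstar = \Hm_\bigstar[\tau_i]/(\tau_i^2 = \tau_{i+1} a_\sigma),$$
I will observe that the defining quadratic relation equates an element of $(a_\sigma)$-filtration $0$ with one of filtration $1$. Hence $\tau_i^2 = 0$ in the associated graded at filtration $0$, and $\mathrm{gr}_\bullet \Lambda^m_\bigstar$ becomes an exterior algebra on the $\tau_i$ over $\mathrm{gr}_\bullet \Hm_\bigstar$. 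Moreover, the right unit $\eta_R(u_\sigma) = u_\sigma + \tau_0 a_\sigma$ degenerates to $\eta_R(u_\sigma) = u_\sigma$ in the associated graded, while $\eta_R(a_\sigma) = a_\sigma$ and the $\tau_i$ are already primitive. Thus $\mathrm{gr}_\bullet \Lambda^m_\bigstar$ is a primitively generated exterior Hopf algebra over $\mathrm{gr}_\bullet \Hm_\bigstar$.

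Next, I will apply the classical computation: for an exterior Hopf algebra on primitive generators $\tau_i$ over a base ring $R$, one has $\Ext(R, R) = R[w_i]$ with $w_i = [\tau_i]$ a polynomial class in cobar degree one. This immediately yields
$$\Ext_{\mathrm{gr}_\bullet \Lambda^m_\bigstar}(\mathrm{gr}_\bullet \Hm_\bigstar, \mathrm{gr}_\bullet \Hm_\bigstar) = \mathrm{gr}_\bullet \Hm_\bigstar[w_i].$$
Reassembling the grading (the underlying additive structure of $\mathrm{gr}_\bullet \Hm_\bigstar$ is just $\Hm_\bigstar$, with $a_\sigma$ carrying May filtration $1$) produces the stated $E_1$-page $\Hm_\bigstar[w_i]$. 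On the positive cone $\mathbb{F}_2[u_\sigma, a_\sigma]$, where $a_\sigma$ is not a zero-divisor, the associated graded is literally itself, so the positive portion of the $E_1$-page is exactly $\mathbb{F}_2[u_\sigma, a_\sigma][w_i]$.

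Finally, I will tabulate the tridegrees by tracing through the definitions: the stem of $w_i = [\tau_i]$ is $(2^i - 1)\rho_2$, inherited from Theorem~\ref{thm:AccAm}; its Adams filtration is $1$ because $w_i$ sits in cobar length $1$; and its May filtration is $0$ because $\tau_i$ itself lies outside $(a_\sigma)$. Meanwhile $u_\sigma$ has Adams filtration $0$ and May filtration $0$, and $a_\sigma$ has Adams filtration $0$ but May filtration $1$ by the very definition of the filtering ideal. The main subtlety I anticipate is the behavior of the negative-cone classes $\theta/(u_\sigma^i a_\sigma^j) \in \Hm_\bigstar$, which are infinitely $a_\sigma$-divisible and therefore lie in $\bigcap_n (a_\sigma)^n$; one must argue carefully that each such class is separately detected in an appropriate filtration, rather than collapsing to zero. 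This issue never arises for the positive-cone statement (which is what is ultimately used in subsequent sections), but it is the one technical point required to identify the full $E_1$-page with $\Hm_\bigstar[w_i]$ additively.
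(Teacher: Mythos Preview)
Your proof is correct and is precisely what the paper's one-line ``Immediate from Proposition~\ref{HopfLambda}'' is hiding: you have unpacked the passage to the associated graded (where $\tau_i^2=0$ and $\eta_R$ becomes trivial) and invoked the standard Ext computation for a primitively generated exterior Hopf algebra. Your flagged subtlety about the negative-cone classes $\theta/(u_\sigma^i a_\sigma^j)$ lying in $\bigcap_n (a_\sigma)^n$ is a genuine issue the paper does not address, but as you note it is irrelevant for the positive-cone statement, which is all that is used downstream (indeed, in Section~\ref{sec:EquivMaySS} the paper immediately restricts to integer gradings, where no negative-cone contributions occur).
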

\begin{proof}
Immediate from Proposition~\ref{HopfLambda}. 
\end{proof}

\begin{prop}\label{prop:genuineMayDiff}
In $\EMaySS_{\mathcal{A}_\bigstar^m}(BP_\mathbb{R})$, the classes $w_i$ and $a_\sigma$ are permanent cycles.  The class $u_\sigma^{2^n}$ ($n \geq 0$) supports a differential of length $(2^{n+1} -1)$: 
$$d_{2^{n+1} -1}(u_\sigma^{2^n}) = w_na_\sigma^{2^{n+1}-1}.$$
\end{prop}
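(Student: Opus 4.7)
The plan is to compute every differential directly in the (uncompleted) cobar complex of the Hopf algebroid $(\Hm_\bigstar, \Lambda_\bigstar^m)$ and then read off the May filtration (the filtration by powers of $(\as)$) of each boundary. The structure I need from Proposition~\ref{HopfLambda}(2) is: in $\Lambda_\bigstar^m$ the classes $\tau_i$ are primitive, the right unit satisfies $\eta_R(\as)=\as$ and $\eta_R(\us)=\us+\tau_0\as$, and the relation $\tau_i^2=\tau_{i+1}\as$ holds (inherited from Theorem~\ref{thm:AccAm} once the $\xi_{i+1}\eta_R(\us)$ summand dies in the quotient).

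For the permanent cycles, the cobar differentials
$$d(\as)=\eta_R(\as)-\as=0,\qquad d(w_i)=d([\tau_i])=[\psi(\tau_i)-\tau_i\otimes 1-1\otimes\tau_i]=0$$
vanish on the nose, so $\as$ and $w_i$ are honest cobar cocycles and therefore survive to $E_\infty$ in $\EMaySS_{\mathcal{A}_\bigstar^m}(BP_\mathbb{R})$.

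For the main differential, I compute in the cobar complex
$$d(\us^{2^n})=\eta_R(\us^{2^n})-\us^{2^n}=(\us+\tau_0\as)^{2^n}-\us^{2^n}=\tau_0^{2^n}\,\as^{2^n},$$
using the Frobenius in characteristic $2$. A short induction on $k$, with base $\tau_0^2=\tau_1\as$ and inductive step $\tau_0^{2^{k+1}}=(\tau_0^{2^k})^2=\tau_k^2\,\as^{2(2^k-1)}=\tau_{k+1}\as\cdot\as^{2^{k+1}-2}$, gives the closed form $\tau_0^{2^k}=\tau_k\,\as^{2^k-1}$. Setting $k=n$ yields
$$d(\us^{2^n})=\tau_n\,\as^{2^n-1}\cdot\as^{2^n}=w_n\,\as^{2^{n+1}-1}.$$
The right-hand side has May filtration exactly $2^{n+1}-1$ (contributed entirely by the powers of $\as$), whereas $\us^{2^n}$ has May filtration $0$; hence the boundary first appears on the $E_{2^{n+1}-1}$-page as the asserted differential.

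The only delicate point is to rule out any shorter May differential on $\us^{2^n}$, but this is automatic from the computation above: the cobar boundary of $\us^{2^n}$ in $\Lambda_\bigstar^m$ is \emph{literally} $w_n\,\as^{2^{n+1}-1}$, with no lower-$\as$-filtration error term produced by the iterated squaring, so no earlier May differential on $\us^{2^n}$ can occur.
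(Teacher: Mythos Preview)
Your proof is correct and follows essentially the same approach as the paper's: both use the primitivity of $\tau_i$ and $\eta_R(\as)=\as$ for the permanent cycles, then raise the right-unit formula $\eta_R(\us)=\us+\tau_0\as$ to the $2^n$-th power via Frobenius and simplify $\tau_0^{2^n}$ to $\tau_n\as^{2^n-1}$ using the relation $\tau_i^2=\tau_{i+1}\as$ repeatedly. Your explicit remark ruling out shorter differentials is a nice clarification that the paper leaves implicit.
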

\begin{proof}
Since the $\tau_i$ generators are primitive and $\eta_R(a_\sigma) = a_\sigma$, the classes $w_i$ and $a_\sigma$ are permanent cycles.  To obtain the differentials on the classes $u_\sigma^{2^n}$, we use the right unit formula for $u_\sigma$.  By Proposition~\ref{HopfLambda}, the right unit formula for $u_\sigma$ is 
$$\eta_R(u_\sigma) = u_\sigma + \tau_0a_\sigma.$$
This translates to the $d_1$-differential 
$$d_1(u_\sigma) = [\tau_0]a_\sigma = w_0 a_\sigma.$$
In general, taking the right unit formula to the $2^n$-th power yields the formula
\begin{eqnarray*}
\eta_R(u_\sigma^{2^n}) &=& \eta_R(u_\sigma)^{2^n} \\ 
&=& u_\sigma^{2^n} + \tau_0^{2^n}a_\sigma^{2^n} \\ 
&=& u_\sigma^{2^n} + \tau_n a_\sigma^{2^{n+1} -1}.
\end{eqnarray*}
where for the last equality we have repeatedly used the relations $\tau_i^2 = \tau_{i+1}a_\sigma$ in $\Lambda^m_\bigstar$.  This produces the $d_{2^{n+1}-1}$-differential on $u_\sigma^{2^n}$, as desired. 
\end{proof}

\subsection{The equivariant May spectral sequence with respect to $\mathcal{A}^{cc}_\bigstar$}

Everything we did in the previous section can be done with respect to $(H^c_\bigstar, \mathcal{A}_\bigstar^{cc})$ as well.  Consider the equivariant Adams spectral sequence 
$$E_2 = \Ext_{\mathcal{A}^{cc}_\bigstar}(H^c_\bigstar, H^c_\bigstar X) \Longrightarrow (\pi_\bigstar^{C_2} X)_2^{\wedge}.$$
We can filter the Hopf algebroid $(H^c_\bigstar, \mathcal{A}^{cc}_\bigstar)$ by powers of the ideal $(a_\sigma)$ to obtain a similar equivariant May spectral sequence. 

\begin{df}\rm
The spectral sequence 
$$E_1 = \Ext_{\text{gr}_\bullet \mathcal{A}_\bigstar^{cc}}(\text{gr}_\bullet H^c_\bigstar, \text{gr}_\bullet H^{c}_\bigstar X) \Longrightarrow \Ext_{\mathcal{A}_\bigstar^{cc}}(H^c_\bigstar, H^c_\bigstar X)$$
is the \textit{$C_2$-equivariant May spectral sequence for $X$ with respect to $(H^c_\bigstar, \mathcal{A}^{cc}_\bigstar)$}.  It is abbreviated by $\EMaySS_{\mathcal{A}_\bigstar^{cc}}(X)$.
\end{df}

When $X = BP_\mathbb{R}$, we can make the same simplifications as we did in Section~\ref{sec:GenuineEMay}.  Let $\Lambda^{cc}_\bigstar := \pi_\bigstar^{C_2}F(E{C_2}_+, \Hm \wedge_{BP_\mathbb{R}} \Hm)$.  The $E_2$-page of the equivariant Adams spectral sequence and $E_1$-page of the equivariant May spectral sequence for $BP_\mathbb{R}$ are equal to 
$$E_2 = \Ext_{\Lambda^{cc}_\bigstar} (H^c_\bigstar, H^c_\bigstar) \Longrightarrow \pi_\bigstar^{C_2} (BP_\mathbb{R})_2^{\wedge}$$
and 
$$E_1 = \Ext_{\text{gr}_\bullet \Lambda^{cc}_\bigstar} (\text{gr}_\bullet H^c_\bigstar, \text{gr}_\bullet H^c_\bigstar) \Longrightarrow \Ext_{\Lambda^{cc}_\bigstar} (H^c_\bigstar, H^c_\bigstar)$$
respectively.  As before, denote the corresponding class of $\tau_i$ in the cobar complex $C_{\Lambda^{cc}_\bigstar}(H^c_\bigstar)$ by $w_i = [\tau_i]$.

\begin{prop}
The $E_1$-page of $\EMaySS_{\mathcal{A}^{cc}_\bigstar} (BP_\mathbb{R})$ is the polynomial ring 
$$H^c_\bigstar[w_i] = \mathbb{Z}/2[u_\sigma^\pm, a_\sigma][w_i], $$
where the filtration of each element is the same as before (see Proposition~\ref{prop:genuineMayE2}).  
\end{prop}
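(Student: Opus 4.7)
The plan is to imitate the proof of Proposition~\ref{prop:genuineMayE2}, substituting Proposition~\ref{HopfLambda}(1) for Proposition~\ref{HopfLambda}(2). First, using the extension of $\mathscr{M}$-Hopf algebroids
$$(\mathbb{F}_2[a_\sigma], P_\bigstar[a_\sigma]) \to (H^c_\bigstar, \mathcal{A}^{cc}_\bigstar) \to (H^c_\bigstar, \Lambda^{cc}_\bigstar)$$
together with the fact that $H^{cc}_\bigstar BP_\mathbb{R} = H^c_\bigstar[\xi_i \mid i \geq 1]$ (Proposition~\ref{prop:BPRxi}), I would identify the relevant $\Ext$-computation as an $\Ext$ over $\Lambda^{cc}_\bigstar$, which is the setting in which the May filtration is defined. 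I would observe that since the ideal $(a_\sigma) \subset \mathcal{A}^{cc}_\bigstar$ is compatible with the quotient map $\mathcal{A}^{cc}_\bigstar \twoheadrightarrow \Lambda^{cc}_\bigstar$, the $(a_\sigma)$-adic filtration descends to $\Lambda^{cc}_\bigstar$, and the convention $|\xi_i^{2^j}|_M = 2^{j+1}-1$ plays no role here because the $\xi_i$'s have been quotiented away.

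Next I would pass to the associated graded. By Proposition~\ref{HopfLambda}(1),
$$\Lambda^{cc}_\bigstar = H^c_\bigstar[\tau_i]/(\tau_i^2 = \tau_{i+1}a_\sigma),$$
with $\tau_i$ primitive and $\eta_R(a_\sigma) = a_\sigma$. Assigning May filtrations $|\tau_i|_M = 0$, $|u_\sigma|_M = 0$, $|a_\sigma|_M = 1$, the right-hand side $\tau_{i+1}a_\sigma$ of the quadratic relation has strictly higher filtration than $\tau_i^2$, so modulo higher filtration the relation collapses to $\tau_i^2 = 0$. Hence the associated graded is the primitively-generated exterior Hopf algebra
$$\mathrm{gr}_\bullet \Lambda^{cc}_\bigstar = H^c_\bigstar[\tau_i]/(\tau_i^2),$$
over $\mathrm{gr}_\bullet H^c_\bigstar = \mathbb{Z}/2[u_\sigma^\pm, a_\sigma]$ (the coefficient ring is already concentrated in a single filtration with respect to the $u_\sigma^{\pm}$-direction, so the associated graded coincides with $H^c_\bigstar$ as a ring).

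Finally I would apply the standard fact that the cobar $\Ext$ of a primitively generated exterior Hopf algebra $k[\tau_i]/(\tau_i^2)$ over a coefficient ring $k$ is the polynomial $k$-algebra on the cobar classes $w_i := [\tau_i] \in \Ext^1$, with bidegree $(1,|\tau_i|)$. This yields
$$E_1 = \Ext_{\mathrm{gr}_\bullet \Lambda^{cc}_\bigstar}(\mathrm{gr}_\bullet H^c_\bigstar,\mathrm{gr}_\bullet H^c_\bigstar) = H^c_\bigstar[w_i] = \mathbb{Z}/2[u_\sigma^\pm, a_\sigma][w_i],$$
with stems, Adams filtrations, and May filtrations of the generators as in Table~\ref{tab:table1}. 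There is no real obstacle here: the only issue that could arise is checking that the $(a_\sigma)$-adic filtration really gives the associated graded structure stated above, but this is immediate from $a_\sigma$ being a non-nilpotent polynomial generator of $H^c_\bigstar$ together with the explicit form of the single relation in $\Lambda^{cc}_\bigstar$.
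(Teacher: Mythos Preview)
Your proposal is correct and takes essentially the same approach as the paper: the paper's proof is the one-liner ``The claim follows directly from \cite[Proposition~6.29]{HuKriz},'' which is precisely Proposition~\ref{HopfLambda}(1), and you have simply unpacked what that citation entails. Your explicit identification of the associated graded as a primitively generated exterior algebra and the resulting polynomial $\Ext$ is exactly the content left implicit in the paper's terse reference.
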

\begin{proof}
The claim follows directly from \cite[Proposition 6.29]{HuKriz}. 
\end{proof}

\begin{prop}\label{prop:BorelMayDiff}
In $\EMaySS_{\mathcal{A}^{cc}_\bigstar}(BP_\mathbb{R})$, the classes $w_i$ and $a_i$ are permanent cycles.  The classes $u_\sigma^{2^n}$ and $u_\sigma^{-2^n}$ ($n \geq 0$) support differentials of length $(2^{n+1} -1)$: 
\begin{eqnarray*}
d_{2^{n+1} -1}(u_\sigma^{2^n}) &=& w_na_\sigma^{2^{n+1}-1}, \\ 
d_{2^{n+1} -1}(u_\sigma^{-2^n}) &=& w_nu_\sigma^{-2^{n+1}}a_\sigma^{2^{n+1}-1}.
\end{eqnarray*}
\end{prop}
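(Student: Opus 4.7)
The plan is to mirror the proof of Proposition~\ref{prop:genuineMayDiff} and then extend it to negative powers of $u_\sigma$ using the multiplicative structure. By Proposition~\ref{HopfLambda}(1), the Hopf algebroid $(H^c_\bigstar,\Lambda^{cc}_\bigstar)$ has the exact same structure formulas on $\tau_i$, $a_\sigma$, and $u_\sigma$ as $(\Hm_\bigstar,\Lambda^m_\bigstar)$, namely $\tau_i$ primitive, $\eta_R(a_\sigma)=a_\sigma$, and $\eta_R(u_\sigma)=u_\sigma+\tau_0 a_\sigma$. Consequently the first two claims (that $w_i$ and $a_\sigma$ are permanent cycles, and that $d_{2^{n+1}-1}(u_\sigma^{2^n})=w_na_\sigma^{2^{n+1}-1}$) follow by the identical cobar computation used in Proposition~\ref{prop:genuineMayDiff}: raise $\eta_R(u_\sigma)$ to the $2^n$-th power and repeatedly apply $\tau_i^2=\tau_{i+1}a_\sigma$ to get $\eta_R(u_\sigma^{2^n})=u_\sigma^{2^n}+\tau_n a_\sigma^{2^{n+1}-1}$.

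The new ingredient is invertibility of $u_\sigma$ in $H^c_\bigstar=\mathbb{F}_2[u_\sigma^\pm,a_\sigma]$, which forces differentials on the negative powers. My plan is to argue by induction on $n$ using the Leibniz rule applied to the equation $u_\sigma^{2^n}\cdot u_\sigma^{-2^n}=1$. Since $1$ is a permanent cycle and $u_\sigma^{-2^n}$ is a unit, any nontrivial differential on $u_\sigma^{-2^n}$ must cancel the corresponding differential on $u_\sigma^{2^n}$. In particular, for the base case $n=0$, no class of bidegree matching $d_r(u_\sigma^{-1})$ can appear for $r<1$ (the filtration jump is controlled by the $a_\sigma$-adic filtration), so one computes
\[
0 = d_1(1) = d_1(u_\sigma\cdot u_\sigma^{-1}) = d_1(u_\sigma)u_\sigma^{-1} + u_\sigma\,d_1(u_\sigma^{-1}),
\]
giving $d_1(u_\sigma^{-1}) = w_0 u_\sigma^{-2}a_\sigma$.

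For the inductive step, I would assume no earlier $d_r$-differential hits $u_\sigma^{-2^n}$; this follows because, by induction, the only differentials already accounted for on negative powers are on $u_\sigma^{-2^k}$ with $k<n$, and these have length $2^{k+1}-1<2^{n+1}-1$ and target classes of the form $w_k u_\sigma^{-2^{k+1}}a_\sigma^{2^{k+1}-1}$, which do not hit $u_\sigma^{-2^n}$. Then applying Leibniz to $u_\sigma^{2^n}\cdot u_\sigma^{-2^n}=1$ at the $E_{2^{n+1}-1}$-page yields
\[
d_{2^{n+1}-1}(u_\sigma^{-2^n}) = -u_\sigma^{-2^{n+1}}\cdot d_{2^{n+1}-1}(u_\sigma^{2^n}) = w_n u_\sigma^{-2^{n+1}} a_\sigma^{2^{n+1}-1},
\]
working in characteristic $2$.

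The main obstacle to watch for is the bookkeeping in the inductive step: I need to be sure no intermediate differential of length $r$ with $1\le r<2^{n+1}-1$ could hit $u_\sigma^{-2^n}$ from some other source, which would alter the Leibniz bookkeeping. This is handled by noting the only potential sources are products involving lower powers $u_\sigma^{\pm 2^k}$, whose differentials are already determined and, for purely degree and filtration reasons, cannot target $u_\sigma^{-2^n}$ before stage $2^{n+1}-1$. Once this is verified, the inductive argument closes and the proposition follows.
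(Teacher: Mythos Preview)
Your proposal is correct and follows the same overall strategy as the paper: reduce to the structure formulas of $(H^c_\bigstar,\Lambda^{cc}_\bigstar)$ from Proposition~\ref{HopfLambda}(1) and reuse the argument of Proposition~\ref{prop:genuineMayDiff} for the positive powers. The one point of divergence is how you handle $u_\sigma^{-2^n}$. You run an induction on $n$ using the Leibniz rule applied to $u_\sigma^{2^n}\cdot u_\sigma^{-2^n}=1$, which forces you to track that $u_\sigma^{-2^n}$ survives to the $E_{2^{n+1}-1}$-page. The paper instead works directly in the cobar complex: since $\eta_R$ is a ring map and $u_\sigma$ is invertible in $H^c_\bigstar$, one simply inverts the right-unit formula to get
\[
\eta_R(u_\sigma^{-1}) = (u_\sigma + \tau_0 a_\sigma)^{-1} = u_\sigma^{-1} + \tau_0 u_\sigma^{-2} a_\sigma \quad \text{(modulo higher powers of $a_\sigma$)},
\]
and then raises to the $2^n$-th power and applies $\tau_i^2=\tau_{i+1}a_\sigma$ exactly as in the positive case. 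This bypasses the inductive Leibniz bookkeeping entirely, since the $a_\sigma$-adic filtration is read off directly from the formula. Your argument and the paper's are really two sides of the same multiplicativity, but the direct $\eta_R$ computation is shorter and avoids the survival check you flagged as the main obstacle.
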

\begin{proof}
The proof is exactly the same as the proof of Proposition~\ref{prop:genuineMayDiff}.  The differentials follow from the right unit formulas
$$\begin{array}{rll}
\eta_R(u_\sigma) &=& u_\sigma + \tau_0a_\sigma, \\ 
\eta_R(u_\sigma^{-1}) &=& (u_\sigma + \tau_0a_\sigma)^{-1} = u_\sigma^{-1} + \tau_0u_\sigma^{-2}a_\sigma \,\,\, \text{(modulo higher powers of $a_\sigma$)},
\end{array}$$
and the relation $\tau_i^2 = \tau_{i+1} a_\sigma$ in $\Lambda^{cc}_\bigstar$.   
\end{proof}

\subsection{$\EMaySS_{\mathcal{A}^{cc}_\bigstar}(BP_\mathbb{R})$ and $\HFPSS(BP_\mathbb{R})$}
The equivariant May spectral sequence $\EMaySS_{\mathcal{A}^{cc}_\bigstar}(BP_\mathbb{R})$ and the homotopy fixed point spectral sequence $\HFPSS(BP_\mathbb{R})$ have the same $E_2$-page and differentials under the correspondence $w_i \leftrightarrow \bar{v}_i$.  This is first observed in (7.1) and (7.2) of \cite{HuKriz}.  The only slight difference is that in $\EMaySS_{\mathcal{A}^{cc}_\bigstar}(BP_\mathbb{R})$, instead of a single $\mathbb{Z}$-class, we have a $w_0$-tower of $\mathbb{Z}/2$-classes.  Our goal in this section is to prove this correspondence. 

\begin{thm}\label{thm:AccHFPSS}
The $C_2$-equivariant May spectral sequence for $BP_\mathbb{R}$ with respect to $(H^c_\bigstar, \mathcal{A}^{cc}_\bigstar)$ is isomorphic to the associated-graded homotopy fixed point spectral sequence for $BP_\mathbb{R}$. 
\end{thm}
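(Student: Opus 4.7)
The approach is to exhibit an explicit isomorphism between the two spectral sequences by pairing their generators, verifying the identification on the $E_2$-pages after suitable associated-gradedification, and then matching all higher differentials. The key dictionary is
\[
w_i \longleftrightarrow \bar{v}_i \ (i\geq 1), \qquad \us^2 \longleftrightarrow u_{2\sigma}, \qquad \as \longleftrightarrow \as, \qquad w_0 \longleftrightarrow 2,
\]
where the May filtration in $\EMaySS_{\mathcal{A}_\bigstar^{cc}}(BP_\mathbb{R})$ plays the role of the cohomological filtration in $\HFPSS(BP_\mathbb{R})$. Each $\mathbb{Z}$-class on the HFPSS $E_2$-page is then replaced by a $w_0$-tower of $\mathbb{F}_2$-classes in the associated graded, with multiplication by $2$ translating to multiplication by $w_0$.

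First I will identify the $E_2$-page of $\EMaySS_{\mathcal{A}_\bigstar^{cc}}(BP_\mathbb{R})$. By Proposition~\ref{prop:BorelMayDiff} (the case $n=0$) the only nontrivial $d_1$ is $d_1(\us) = w_0\as$, and the Leibniz rule then gives $d_1(\us^{2k+1}) = \us^{2k} w_0\as$. Taking $d_1$-homology kills every odd power of $\us$ and imposes the relation $w_0 \as = 0$, producing
\[
E_2 \;=\; \mathbb{F}_2[\us^{\pm 2}, \as, w_0, w_i \mid i \geq 1]/(w_0\as).
\]
On the HFPSS side, the $E_2$-page is $\mathbb{Z}[u_{2\sigma}^\pm, \as, \bar{v}_i]/(2\as)$; passing to associated graded with respect to the $2$-adic filtration and writing $w_0$ for the generator of each resulting $\mathbb{F}_2[w_0]$-tower gives $\mathbb{F}_2[u_{2\sigma}^\pm, \as, w_0, \bar{v}_i]/(w_0\as)$, which matches the EMaySS $E_2$ term-by-term under the dictionary. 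The higher differentials then also match: for $n\geq 1$, Proposition~\ref{prop:BorelMayDiff} gives $d_{2^{n+1}-1}(\us^{\pm 2^n}) = w_n \us^{\mp 2^{n+1}\epsilon} \as^{2^{n+1}-1}$ (with $\epsilon=1$ for negative exponents and $0$ otherwise), which under $\us^{2^n} = u_{2\sigma}^{2^{n-1}}$ and $w_n = \bar{v}_n$ becomes precisely the HFPSS differential $d_{2^{n+1}-1}(u_{2\sigma}^{\pm 2^{n-1}}) = \bar{v}_n u_{2\sigma}^{\mp 2^n \epsilon} \as^{2^{n+1}-1}$ recorded in Section~\ref{sec:SliceToHFPSS}. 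Since $d_r$ is a derivation on both sides, agreement on generators forces agreement everywhere.

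The principal delicacy is the bookkeeping around the $d_1$-differential, which has no direct counterpart in the HFPSS. One must check that the relation $w_0\as = 0$ arising from $d_1(\us) = w_0\as$ on the EMaySS side is genuinely the image of the HFPSS relation $2\as = 0$ under the $2$-adic associated-graded construction, and that the Bockstein extensions converting integral $\mathbb{Z}$-classes in the HFPSS into $\mathbb{Z}/2$-classes are consistent with multiplication by $w_0$ on the EMaySS side. Once this filtration-matching is verified, the term-by-term checks above assemble into the claimed isomorphism of spectral sequences.
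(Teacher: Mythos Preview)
Your argument is correct in substance but takes a genuinely different route from the paper's proof. You argue by direct computation: you compute the $E_2$-page of $\EMaySS_{\mathcal{A}_\bigstar^{cc}}(BP_\mathbb{R})$ by taking $d_1$-homology, match it (under the dictionary $w_i\leftrightarrow \bar v_i$, $\us^2\leftrightarrow u_{2\sigma}$, $w_0\leftrightarrow 2$) with the associated-graded HFPSS $E_2$-page, and then verify that the known higher differentials from Proposition~\ref{prop:BorelMayDiff} correspond exactly to the HFPSS differentials recorded in Section~\ref{sec:SliceToHFPSS}. Since both spectral sequences are multiplicative and each $E_r$-page is generated as an algebra by the classes on which the differentials are specified, Leibniz propagates the match to all classes and hence to all later pages.

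The paper instead gives a structural argument. It identifies $\EMaySS_{\mathcal{A}_\bigstar^{cc}}(BP_\mathbb{R})$ with the \emph{algebraic} $\as$-Bockstein spectral sequence (this is tautological from the definition of the May filtration), identifies $\HFPSS(BP_\mathbb{R})$ with the \emph{homotopy} $\as$-Bockstein spectral sequence (via the observation that $S(n\sigma)_+$ is the $(n-1)$-skeleton of $E{C_2}_+$, following Hill--Meier), and then compares the two Bockstein spectral sequences by running a double filtration (Adams resolution in one direction, $\as$-filtration in the other). The collapse of the relevant Adams spectral sequences, together with the absence of hidden $\as$-extensions, forces the two Bocksteins to agree after passage to associated graded.

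The tradeoff: your approach is more elementary and entirely self-contained given the differential computations already in the paper, but it depends on having those computations in hand and on the multiplicativity bookkeeping you allude to. The paper's approach is more conceptual, does not require knowing any differentials explicitly, and explains \emph{why} the isomorphism holds (both are $\as$-Bockstein spectral sequences); it would also generalize more readily to other spectra. Your closing paragraph correctly flags the one soft spot in your argument---making precise what ``associated-graded HFPSS'' means and why $w_0$ genuinely plays the role of $2$---which the paper's double-filtration argument handles automatically.
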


\begin{rem}\rm
By the associated-graded homotopy fixed point spectral sequence, we mean that whenever we see a $\mathbb{Z}$-class on the $E_2$-page, we replace it by a tower of $\mathbb{Z}/2$-classes.  
\end{rem}

\begin{proof}
Consider the following diagram of spectral sequences:   
$$\begin{tikzcd}[column sep=large]
\text{algebraic }a_\sigma\text{-Bockstein} \arrow[r, leftrightarrow, "\cong", "\text{collapse}"'] &\text{homotopy }a_\sigma\text{-Bockstein} \ar[d, equal]\\
\EMaySS_{\mathcal{A}^{cc}_\bigstar}(BP_\mathbb{R}) \ar[u, equal] \ar[r, leftrightarrow, dashed, " \cong"] & \text{HFPSS}(BP_\mathbb{R}).
\end{tikzcd}$$

We will explain each arrow in the diagram one by one: \\

\noindent (1)  $\EMaySS_{\mathcal{A}^{cc}_\bigstar}(BP_\mathbb{R}) = \text{algebraic } a_\sigma\text{-Bockstein}$.  This is by definition: the equivariant May spectral sequence for $(H^c_\bigstar, \mathcal{A}^{cc}_\bigstar)$ is defined by filtering $\Lambda^{cc}_\bigstar$ by powers of $(a_\sigma)$, which is the algebraic $a_\sigma$-Bockstein. \\

\noindent (2) $\text{Homotopy }a_\sigma\text{-Bockstein} = \text{HFPSS}(BP_\mathbb{R})$.  This is proven in \cite[Lemma 4.8]{HillMeier}.  We include their proof here because it is nice and short.  Start with the cofiber sequence
$$S^0 \vee S^0 \longrightarrow S^0 \stackrel{a_\sigma}{\longrightarrow} S^{\sigma}.$$
Taking $F(-, BP_\mathbb{R})$ yields the new sequence 
$$\Sigma^{-\sigma} BP_\mathbb{R} \stackrel{a_\sigma}{\longrightarrow} BP_\mathbb{R} \longrightarrow BP_\mathbb{R} \vee BP_\mathbb{R}.$$
The homotopy $a_\sigma$-Bockstein is the spectral sequence associated to this cofiber sequence.  The key observation is that in the cofiber sequence 
$$S(n\sigma)_+ \longrightarrow S^0 \stackrel{a_\sigma^n}{\longrightarrow} S^{n\sigma},  $$ 
$S(n\sigma)_+$ is the $(n-1)$-skeleton for the standard equivariant decomposition of $E{C_2}_+$, which is used to construct the homotopy fixed point spectral sequence.  Taking $F(-, BP_\mathbb{R})$ again, we obtain the following commutative diagram:
$$\begin{tikzcd}
\Sigma^{-(n+1)\sigma} BP_\mathbb{R} \ar[d, "a_\sigma"]\ar[r, "a_\sigma^{n+1}"] & BP_\mathbb{R} \ar[r] \ar[d, "\text{id}"] & F(S((n+1)\sigma)_+, BP_\mathbb{R}) \ar[d] \\
\Sigma^{-n\sigma} BP_\mathbb{R} \ar[r, "a_\sigma^n"] & BP_\mathbb{R} \ar[r] & F(S(n\sigma)_+, BP_\mathbb{R}).
\end{tikzcd}$$
It follows that the towers for constructing the homotopy $a_\sigma$-Bockstein spectral sequence and the $\HFPSS(BP_\mathbb{R})$ are the same. \\

\noindent (3) Algebraic $a_\sigma$-Bockstein $\cong$ Homotopy $a_\sigma$-Bockstein.  Consider the following diagram: 
$$\begin{tikzcd}
\vdots & \vdots & \vdots \\ 
\ar[u] (H^c)^{\wedge 2} \wedge \Sigma^{-\sigma} BP_\mathbb{R} \ar[r, "a_\sigma"] & \ar[u] (H^c)^{\wedge 2} \wedge BP_\mathbb{R} \ar[r] &  \ar[u] (H^c)^{\wedge 2} \wedge  (BP_\mathbb{R} \vee BP_\mathbb{R} )\\
\ar[u] H^c \wedge \Sigma^{-\sigma} BP_\mathbb{R} \ar[r, "a_\sigma"] & \ar[u] H^c \wedge BP_\mathbb{R} \ar[r] & \ar[u] H^c \wedge ( BP_\mathbb{R} \vee BP_\mathbb{R} )\\
\ar[u] \Sigma^{-\sigma} BP_\mathbb{R} \ar[r, "a_\sigma"] & \ar[u] BP_\mathbb{R} \ar[r] & \ar[u] BP_\mathbb{R} \vee BP_\mathbb{R}. 
\end{tikzcd}$$
The vertical direction is the Adams resolution by $H^c = F(E{C_2}_+, H\mathbb{F}_2)$, and the horizontal direction is filtering by powers of $a_\sigma$ (the $a_\sigma$-Bockstein).  There are two ways to compute $\pi_\star^{C_2}(BP_\mathbb{R})_2^\wedge$ from $BP_\mathbb{R} \vee BP_\mathbb{R}$: we can either first use the horizontal filtration, then the vertical filtration, or first use the vertical filtration, and then the horizontal filtration.  This produces the following commutative diagram of spectral sequences
$$\begin{tikzcd}[row sep=huge, column sep = huge]
\Ext_{\mathcal{A}^{cc}_\bigstar}(H^c_\bigstar, H^c_\bigstar(BP_\mathbb{R} \vee BP_\mathbb{R})) \ar[rr, Rightarrow, "\text{algebraic }a_\sigma\text{-Bockstein}"] \ar[d, Rightarrow, "\text{Adams (collapse)}"] && \Ext_{\mathcal{A}^{cc}_\bigstar}(H^c_\bigstar, H^c_\bigstar BP_\mathbb{R}) \ar[d, Rightarrow, "\text{Adams (collapse)}"] \\
\ar[rr, Rightarrow, "\text{homotopy }a_\sigma\text{-Bockstein}"] (\pi_\bigstar^{C_2} BP_\mathbb{R} \vee BP_\mathbb{R})_2^\wedge && (\pi_\bigstar^{C_2} F(E{C_2}_+, BP_\mathbb{R}))^\wedge_2=(\pi_\bigstar^{C_2} BP_\mathbb{R})^\wedge_2.
\end{tikzcd}
$$

The vertical spectral sequences are from the Adams (vertical) filtrations, and the horizontal spectral sequences are from the $a_\sigma$-Bockstein (horizontal).  The left vertical spectral sequence collapses by degree reasons.  In particular, the integer-graded part is the non-equivariant Adams spectral sequence computing $\pi_*BP$, which collapses.  The right vertical spectral sequence collapses by degree reasons as well.  For both Adams spectral sequences, \cite[Theorem 4.11]{HuKriz} and our computation show there are no hidden $a_\sigma$-extensions.  The top spectral sequence is the $a_\sigma$-Bockstein associated with the cofiber sequences
$$(H^c)^{\wedge n} \wedge \Sigma^{-\sigma} BP_\mathbb{R} \stackrel{a_\sigma}{\to} (H^c)^{\wedge n} \wedge BP_\mathbb{R} \to (H^c)^{\wedge n} \wedge (BP_\mathbb{R} \vee BP_\mathbb{R})$$
for $n \geq 1$.  When we are computing the Ext groups, it is exactly the same as filtering $(H^c_\star, A^{cc}_\star)$ by powers of $a_\sigma$.  Therefore this spectral sequence is the algebraic $a_\sigma$-Bockstein, or in other words, the $C_2$-equivariant May spectral sequence with respect to $\mathcal{A}^{cc}_\bigstar$.  Finally, the bottom arrow is the homotopy $a_\sigma$-Bockstein, which is the homotopy fixed point spectral sequence by the previous discussion.  The collapse of the two Adams spectral sequences (and no hidden $a_\sigma$-extensions) implies that the algebraic $a_\sigma$-Bockstein is isomorphic to the associated-graded homotopy $a_\sigma$-Bockstein, as desired.  

\end{proof}

\subsection{$\EMaySS_{\mathcal{A}^m_\bigstar}(BP_\mathbb{R})$ and $\SliceSS(BP_\mathbb{R})$}

The map $(\Hm_\bigstar, \mathcal{A}^m_\bigstar) \to (H^c_\bigstar, \mathcal{A}^{cc}_\bigstar)$ induces maps of the corresponding May and Adams spectral sequences:
$$\begin{tikzcd}
\Ext_{\text{gr}_\bullet \Lambda^{m}_\bigstar} (\text{gr}_\bullet \Hm_\bigstar, \text{gr}_\bullet \Hm_\bigstar) \ar[d] \ar[r, Rightarrow, "\text{May}"] &  \Ext_{\Lambda^{m}_\bigstar} (\Hm_\bigstar, \Hm_\bigstar) \ar[r, Rightarrow, "\text{Adams}"] \ar[d] &\pi_\bigstar^{C_2} (BP_\mathbb{R})_2^{\wedge} \ar[d, "\text{id}"] \\
\Ext_{\text{gr}_\bullet \Lambda^{cc}_\bigstar} (\text{gr}_\bullet H^c_\bigstar, \text{gr}_\bullet H^c_\bigstar) \ar[r, Rightarrow, "\text{May}"] &  \Ext_{\Lambda^{cc}_\bigstar} (H^c_\bigstar, H^c_\bigstar) \ar[r, Rightarrow, "\text{Adams}"] &\pi_\bigstar^{C_2} (BP_\mathbb{R})_2^{\wedge}. 
\end{tikzcd}$$
For the purpose of finding Hurewicz images, we restrict our attention to the maps between integer-graded spectral sequences.  They are induced from the map of integer-graded Hopf algebroids $(\Hm_*, A^m_*)\to (H^c_*, A^{cc}_*)$: 
$$\begin{tikzcd}
\Ext_{\text{gr}_\bullet \Lambda^{m}_*} (\text{gr}_\bullet \Hm_*, \text{gr}_\bullet \Hm_*) \ar[d] \ar[r, Rightarrow, "\text{May}"] &  \Ext_{\Lambda^{m}_*} (\Hm_*, \Hm_*) \ar[r, Rightarrow, "\text{Adams}"] \ar[d] &\pi_*^{C_2} (BP_\mathbb{R})_2^{\wedge} \ar[d, "\text{id}"] \\
\Ext_{\text{gr}_\bullet \Lambda^{cc}_*} (\text{gr}_\bullet H^c_*, \text{gr}_\bullet H^c_*) \ar[r, Rightarrow, "\text{May}"] &  \Ext_{\Lambda^{cc}_*} (H^c_*, H^c_*) \ar[r, Rightarrow, "\text{Adams}"] &\pi_*^{C_2} (BP_\mathbb{R})_2^{\wedge}. 
\end{tikzcd}$$
The $E_2$-page of $\EMaySS_{A^m_*}(BP_\mathbb{R})$ is the subring of the polynomial ring $\Hm_\bigstar[w_i]$ that contains only integer-graded elements.  For degree reasons, monomials of the form $u_\sigma^{-k}a_\sigma^{-l}w_{i_1}^{e_1}w_{i_2}^{e_2} \cdots w_{i_j}^{e_j}$ with $k > 0$, $l \geq 0$ do not have integer grading.  It follows that the integer graded elements are all contained in the subring
$$\mathbb{Z}/2[u_\sigma, a_\sigma][w_i] \subset \Hm_\bigstar[w_i].$$

\begin{thm}\label{thm:AmSliceSS}
The integer-graded $C_2$-equivariant May spectral sequence of $BP_\mathbb{R}$ with respect to $(\Hm_*, A^m_*)$ is isomorphic to the associated-graded slice spectral sequence of $BP_\mathbb{R}$.  
\end{thm}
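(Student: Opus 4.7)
The plan is to directly compare the $E_2$-pages and higher differentials of the two spectral sequences, building on the integer-graded structure from Proposition~\ref{prop:genuineMayE2} and the slice data from Proposition~\ref{prop:BPRSlices} and Theorem~\ref{thm:HZcoeff}. The argument parallels and exploits the already-established Borel identification of Theorem~\ref{thm:AccHFPSS}; the key new ingredient is that the $d_1$-differential of the genuine May SS reproduces the slice-level relation $2 a_\sigma = 0$ once one passes to the associated-graded.

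First I would identify the integer-graded $E_1$-page of $\EMaySS_{A^m_*}(BP_\mathbb{R})$ as the integer-graded subring of $\mathbb{Z}/2[u_\sigma, a_\sigma][w_i]$ and run the $d_1$-differential $d_1(u_\sigma) = w_0 a_\sigma$ from Proposition~\ref{prop:genuineMayDiff} at $n = 0$. This kills the odd powers of $u_\sigma$, forces the relation $w_0 a_\sigma = 0$, and leaves
$$E_2 \cong \mathbb{Z}/2[w_0, u_{2\sigma}, w_1, w_2, \ldots, a_\sigma]/(w_0 a_\sigma),$$
with $u_{2\sigma} := u_\sigma^2$. On the slice side, Proposition~\ref{prop:BPRSlices} and Theorem~\ref{thm:HZcoeff} give $E_2 = \mathbb{Z}[\bar{v}_i, u_{2\sigma}, a_\sigma]/(2 a_\sigma)$; replacing each $\mathbb{Z}$-summand by a $w_0$-tower of $\mathbb{Z}/2$-classes (with $w_0$ encoding multiplication by $2$) yields precisely the same polynomial algebra under the dictionary $\bar{v}_i \leftrightarrow w_i$, $u_{2\sigma} \leftrightarrow u_\sigma^2$, $a_\sigma \leftrightarrow a_\sigma$.

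Next I would match the higher differentials using Propositions~\ref{prop:genuineMayDiff} and~\ref{prop:BPRSliceSS}. Under the substitution $u_\sigma^{2^n} = u_{2\sigma}^{2^{n-1}}$ valid for $n \geq 1$, the May differential $d_{2^{n+1}-1}(u_\sigma^{2^n}) = w_n a_\sigma^{2^{n+1}-1}$ coincides with the slice differential $d_{2^{k+1}-1}(u_{2\sigma}^{2^{k-1}}) = \bar{v}_k a_\sigma^{2^{k+1}-1}$ at $k = n$. Since $w_0, w_i, a_\sigma$ are permanent cycles on both sides, the Leibniz rule propagates this agreement through every subsequent page, delivering the desired isomorphism of spectral sequences.

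The main obstacle I expect is the bookkeeping around the $d_1$-step: one must verify that no extraneous $d_1$-cycles survive beyond the stated polynomial algebra on $E_2$, which amounts to computing $\ker d_1 / \operatorname{im} d_1$ carefully using the $(a_\sigma)$-filtration. This is a refined version of the analysis underlying Theorem~\ref{thm:AccHFPSS}, and should go through because the only new May relation produced by $d_1$ is exactly the shadow of the slice relation $2 a_\sigma = 0$ under the associated-graded correspondence.
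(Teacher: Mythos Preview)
Your proposal is correct, but it takes a different route from the paper.  The paper's proof is indirect: it uses the commutative square
\[
\begin{tikzcd}
\EMaySS_{A^m_*}(BP_\mathbb{R}) \ar[d] \ar[r, leftrightarrow, dashed, "\cong"] & \SliceSS(BP_\mathbb{R}) \ar[d]\\
\EMaySS_{A^{cc}_*}(BP_\mathbb{R}) \ar[r, leftrightarrow, "\cong"] & \HFPSS(BP_\mathbb{R}),
\end{tikzcd}
\]
where the bottom isomorphism is Theorem~\ref{thm:AccHFPSS} and both vertical maps are inclusions on $E_2$ that are isomorphisms on or below the line of slope~$1$ (Propositions~\ref{prop:genuineMayDiff}, \ref{prop:BorelMayDiff}, and~\ref{SliceHFPSS}).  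Since the integer-graded slice $E_2$ lives entirely in that range, the dashed isomorphism follows.  By contrast, you bypass the Borel comparison altogether and match the two spectral sequences directly: you run the May $d_1$ to produce $\mathbb{Z}/2[w_0,u_{2\sigma},w_i,a_\sigma]/(w_0 a_\sigma)$, recognize this as the associated-graded of the slice $E_2$, and then match the generating differentials from Propositions~\ref{prop:genuineMayDiff} and~\ref{prop:BPRSliceSS}.  Your approach is more hands-on and makes the role of the $d_1$-step (and hence the meaning of ``associated-graded'') transparent; the paper's approach is shorter given the machinery already in place and avoids the explicit $\ker d_1/\operatorname{im} d_1$ calculation.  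Both rely on the same underlying fact that the multiplicative generators and their differentials agree, so neither is strictly more general; they simply package the verification differently.
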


\begin{proof}
Consider the following diagram: 
$$\begin{tikzcd}
\EMaySS_{A^m_*}(BP_\mathbb{R}) \ar[d] \ar[r, leftrightarrow, dashed, "\cong"] & \SliceSS(BP_\mathbb{R}) \ar[d]\\
\EMaySS_{A^{cc}_*}(BP_\mathbb{R}) \ar[r, leftrightarrow, "\cong"] & \HFPSS(BP_\mathbb{R}).
\end{tikzcd}$$
The above discussion, together with Proposition~\ref{prop:genuineMayDiff} and Proposition~\ref{prop:BorelMayDiff}, show that the left vertical map is an inclusion on the $E_2$-page and an isomorphism on or below the line of slope 1.  Proposition~\ref{SliceHFPSS} shows that the right vertical map is also an inclusion on the $E_2$-page and an isomorphism on or below the line of slope 1.  Given the isomorphism already established in Theorem~\ref{thm:AccHFPSS}, it follows that $\EMaySS_{A^m_*}(BP_\mathbb{R})$ is isomorphic to the associated-graded $\SliceSS(BP_\mathbb{R})$, as desired. 
\end{proof}

\begin{rem}\rm
The isomorphism in Theorem~\ref{thm:AccHFPSS} is of $RO(C_2)$-graded spectral sequences.  We have only proven the isomorphism in Theorem~\ref{thm:AmSliceSS} as integer-graded spectral sequences.  This is enough for the purpose of proving Hurewicz images. 
\end{rem}

\section{Map of May Spectral Sequences} \label{sec:MaySSMay}

\subsection{Map of dual Steenrod algebras}
The maps $\mathcal{A}_* \to \mathcal{A}_\bigstar^m \to \Lambda_\bigstar^m$ induce maps of Adams $E_2$-pages:
$$E_2(\ASS(\mathbb{S})) \to E_2(\EASS(\mathbb{S})) \to E_2(\EASS(BP_\mathbb{R})). $$
Filtering $\mathcal{A}_*$, $\mathcal{A}_\bigstar^m$, and $\Lambda_\bigstar^m$ compatibly with respect to the map above produces maps of May spectral sequences 
$$\text{MMaySS}(\mathbb{S}) \to \EMaySS(\mathbb{S}) \to \EMaySS(BP_\mathbb{R}).$$
Here, $\text{MMaySS}(\mathbb{S})$ is the modified May spectral sequence, which will be defined in section~\ref{sec:ChangeofFiltration}.  These maps of May spectral sequences will later help us prove our detection theorem for $BP_\mathbb{R}^{C_2}$.


Recall that
\begin{eqnarray*}
\mathcal{A}_* &=& \mathbb{F}_2[\zeta_1, \zeta_2, \ldots], \\ 
\mathcal{A}_\bigstar^m &=& \Hm_\bigstar [\xi_i, \tau_i] / (\tau_i^2 = \tau_{i+1}a_\sigma + \xi_{i+1} \eta_R(u_\sigma) ), \\ 
\Lambda_\bigstar^m &=& \Hm_\bigstar[\tau_i]/(\tau_i^2 = \tau_{i+1}a_\sigma).
\end{eqnarray*}

The following theorem will be used later for
\begin{enumerate}
\item Constructing and computing the map $\text{MMaySS}(\mathbb{S}) \to \EMaySS(BP_\mathbb{R})$ of May spectral sequences. 
\item Computing the images of elements in the classical Adams spectral sequence $\ASS(\mathbb{S})$ under the map 
$$\ASS(\mathbb{S}) \rightarrow \EASS(BP_\mathbb{R}).$$
\end{enumerate}

\begin{thm}\label{thm:SteenrodAlgebraMap}
The composite map $\mathcal{A}_* \to \mathcal{A}^m_\bigstar \to \Lambda^m_\bigstar$ sends the element
$$ \zeta_i^{2^j} \mapsto \tau_{i+j-1} \us^{2^{i+j-1} - 2^j} \as^{2^j -1} \text{ (modulo higher powers of $a_\sigma$)}.$$
\end{thm}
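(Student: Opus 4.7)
The plan is to first establish the $j = 0$ case
$$\zeta_i \;\longmapsto\; \tau_{i-1}\, u_\sigma^{2^{i-1}-1} \pmod{(a_\sigma)}$$
by induction on $i$, and then deduce the full statement by raising to the $2^j$-th power, using that $\Lambda^m_\bigstar$ is commutative of characteristic $2$.

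For the inductive step, I would use the recursion
$$a_\sigma^{2^i}\xi_i \;=\; \zeta_{i-1}^2\,\eta_R(u_\sigma) \;+\; \zeta_i\, a_\sigma \;+\; \xi_{i-1}\,u_\sigma^{2^{i-1}}$$
from Theorem~\ref{thm:xizeta}. In $\Lambda^m_\bigstar$ the generators $\xi_k$ are killed, reducing this to $\zeta_i\, a_\sigma = \zeta_{i-1}^2\,\eta_R(u_\sigma)$. Substituting the inductive hypothesis, together with $\eta_R(u_\sigma) = u_\sigma + \tau_0 a_\sigma$ and $\tau_k^2 = \tau_{k+1} a_\sigma$ from Proposition~\ref{HopfLambda}, and using the Frobenius in characteristic $2$, I expect to obtain
$$\zeta_i\, a_\sigma \;=\; \tau_{i-1}\, u_\sigma^{2^{i-1}-1}\, a_\sigma \;+\; a_\sigma^2\, S$$
for an explicit $S \in \Lambda^m_\bigstar$. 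The base case $i = 1$ is direct: the recursion collapses to $(\zeta_1 + \tau_0)\, a_\sigma = 0$, forcing $\zeta_1 = \tau_0$.

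The step I expect to require the most care is cancelling the common factor of $a_\sigma$ to pass from the identity above to $\zeta_i = \tau_{i-1}\, u_\sigma^{2^{i-1}-1} + a_\sigma\, S$, because $\Lambda^m_\bigstar$ contains genuine $a_\sigma$-torsion coming from the generators $\theta/u_\sigma^i a_\sigma^j$ of $\Hm_\bigstar$. The resolution is to check degrees: every such torsion generator has $\sigma$-part $2 + i + j \geq 2$, and each $\tau_k$ has nonnegative $\sigma$-part $2^k - 1$, so no product of these with a monomial in $u_\sigma$ and $a_\sigma$ can have $\sigma$-part zero. Consequently the integer-graded part of $\Lambda^m_\bigstar$ is a free module over $\mathbb{F}_2[u_\sigma, a_\sigma]$ on the square-free monomials in $\{\tau_k\}$, and $a_\sigma$-multiplication is manifestly injective there. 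The element $\zeta_i - \tau_{i-1}\,u_\sigma^{2^{i-1}-1} - a_\sigma S$ is integer-graded of degree $2^i - 1$ and is killed by $a_\sigma$, hence equal to zero.

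Finally, for the general $j$, additivity of the $2^j$-th power in characteristic $2$ yields
$$\zeta_i^{2^j} \;=\; \tau_{i-1}^{2^j}\, u_\sigma^{(2^{i-1}-1)\cdot 2^j} \;+\; a_\sigma^{2^j}\, S^{2^j}.$$
Iterating $\tau_k^2 = \tau_{k+1}\, a_\sigma$ gives $\tau_k^{2^j} = \tau_{k+j}\, a_\sigma^{2^j - 1}$, so
$$\zeta_i^{2^j} \;\equiv\; \tau_{i+j-1}\, u_\sigma^{2^{i+j-1} - 2^j}\, a_\sigma^{2^j - 1} \pmod{(a_\sigma)^{2^j}},$$
which is the asserted formula modulo higher powers of $a_\sigma$.
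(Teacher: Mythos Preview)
Your proof is correct and follows essentially the same strategy as the paper: induct on $i$ using the recursion from Theorem~\ref{thm:xizeta} (with the $\xi_k$ killed in $\Lambda^m_\bigstar$), then raise to the $2^j$-th power and simplify via $\tau_k^2=\tau_{k+1}a_\sigma$. The one noteworthy difference is that the paper carries out the induction with $\eta_R(u_\sigma)$ rather than $u_\sigma$, proving the exact identity $\zeta_i=\tau_{i-1}\,\eta_R(u_\sigma)^{2^{i-1}-1}$ in $\Lambda^m_\bigstar$ and only replacing $\eta_R(u_\sigma)$ by $u_\sigma$ modulo $a_\sigma$ at the very end; this makes the inductive step slightly cleaner algebraically. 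Your version, working modulo $a_\sigma$ throughout, is equally valid, and in fact you are more careful than the paper on one point: both arguments must cancel a factor of $a_\sigma$ from an equation of the form $X\,a_\sigma=Y\,a_\sigma$, and you explicitly justify this via the degree argument ruling out $\theta$-type torsion in integer grading, whereas the paper performs the cancellation without comment.
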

\begin{proof}
By an abuse of notation, we will denote $\zeta_i$ to be the image of $\zeta_i \in \mathcal{A}_*$ in $\mathcal{A}_\bigstar^m$ and $\Lambda_\bigstar^m$.  The following relations hold in $\mathcal{A}^m_\bigstar$ (cf. \cite[Theorem 6.18, Theorem 6.41]{HuKriz} and Theorem~\ref{thm:xizeta}):
\begin{eqnarray}
\as^{2^i} \xi_i &=& \zeta_{i-1}^2 \eta_R(\us) + \zeta_i \as + \xi_{i-1}\us^{2^{i-1}}, \,\,\,\,\,\,  \label{eq:5.1}\\
a_\sigma \tau_0 &=& u_\sigma + \eta_R(u_\sigma).  \,\,\,\,\,\, \label{eq:5.3}
\end{eqnarray}
To prove our claim, we will show using induction on $i$ that 
$$\zeta_i \mapsto \tau_{i-1} \eta_R(\us)^{2^{i-1} -1} \pmod{\xi_1, \xi_2, \ldots}.$$
For the base case when $i = 1$, equations (\ref{eq:5.1}) and (\ref{eq:5.3}) imply
$$\begin{array}{llrll}
&& \as^2\xi_1 &=& \eta_R(\us) + \zeta_1\as + \us \\
&\Longrightarrow& \zeta_1 \as &=& \us + \eta_R(\us) = a_\sigma \tau_0 \pmod{\xi_1, \xi_2, \ldots} \\ 
&\Longrightarrow& \zeta_1 &=& \tau_0 \pmod{\xi_1, \xi_2, \ldots}. 
\end{array}$$
Therefore the base case holds.  Now, suppose we have shown that 
$$\zeta_{i-1} = \tau_{i-2}\eta_R(\us)^{2^{i-2}-1} \pmod{\xi_1, \xi_2, \ldots}.$$
To prove the relation for $\zeta_i$, we use relation (\ref{eq:5.1}) again:
$$\begin{array}{llrll}
&&\zeta_i \as &=& \zeta_{i-1}^2 \eta_R(\us) \pmod{\xi_1, \xi_2, \ldots}  \\ 
&&&=& (\tau_{i-2}\eta_R(\us)^{2^{i-2}-1} )^2 \eta_R(\us) \pmod{\xi_1, \xi_2, \ldots} \, \, \, \, \, \text{(induction hypothesis)}\\
&&&=& \tau_{i-2}^2 \eta_R(\us)^{2^{i-1} -1} \pmod{\xi_1, \xi_2, \ldots}  \\ 
&&&=& (\tau_{i-1} \as) \eta_R(\us)^{2^{i-1} -1} \pmod{\xi_1, \xi_2, \ldots} \\ 
&\Longrightarrow& \zeta_i &=& \tau_{i-1} \eta_R(\us)^{2^{i-1} -1} \pmod{\xi_1, \xi_2, \ldots}, 
\end{array}$$
as desired. 

To finish the proof of the theorem, we need to simplify the expression 
$$\zeta_i^{2^j} = \left(\tau_{i-1} \eta_R(\us)^{2^{i-1} -1} \right)^{2^j} $$
modulo higher powers of $a_\sigma$.  Since $\us + \eta_R(\us) = \tau_0 a_\sigma$, 
$$\us \equiv \eta_R(\us) \, \, \, \, \, \text{(modulo higher powers of $\as$)}. $$
After applying the relation $\tau_{n}^2 = \tau_{n+1} a_\sigma$ $j$-times, we obtain the equality
$$\zeta_i^{2^j} \equiv \tau_{i+j-1} \us^{2^{i+j-1} - 2^j} \as^{2^j -1} \, \, \, \, \, \text{(modulo higher powers of $\as$)},$$ 
as desired. 
\end{proof}

\subsection{Change of filtration}\label{sec:ChangeofFiltration}
The maps $\mathcal{A}_* \to A^m_{\bigstar} \to \Lambda^m_\bigstar$ induce maps of Ext groups 
$$\begin{tikzcd}
\Ext_{\mathcal{A}_*}(\F, \F) \ar[r] \ar[rd] &\Ext_{A^m_{\bigstar}} (\Hm_\bigstar, \Hm_\bigstar)  \ar[d] \\
&\Ext_{\Lambda^m_\bigstar}(\Hm_\bigstar, \Hm_\bigstar).
\end{tikzcd}$$
To analyze these maps, we will construct maps of May spectral sequences:
$$\text{MMaySS}(\mathbb{S}) \to \EMaySS(\mathbb{S}) \to \EMaySS(BP_\mathbb{R}).$$  
We do so by filtering the classical dual Steenrod algebra $\mathcal{A}_*$ and the equivariant dual Steenrod algebras $A^m_{\bigstar}$ and $\Lambda_\bigstar^m$ compatibly with respect to the maps $\mathcal{A}_* \to A^m_{\bigstar} \to \Lambda^m_\bigstar$.  

Recall that in constructing the classical May spectral sequence, the dual Steenrod algebra $\mathcal{A}_* = \F[\zeta_1, \zeta_2, \ldots]$ is filtered by powers of its unit coideal, hence producing an increasing filtration.  More specifically, we can define a grading on $\mathcal{A}_*$ by setting the degree of $h_{i, j} := \zeta_i^{2^j}$ to be 
$$|h_{i, j}| = 2i -1$$ 
and extend additively to all unique representatives.  The increasing filtration associated to this grading is
$$F_0\mathcal{A}_* \subset F_1 \mathcal{A}_* \subset \cdots \subset F_p \mathcal{A}_* \subset F_{p+1} \mathcal{A}_* \subset \cdots \subset \mathcal{A}_*,$$
where at stage $p$, $F_p\mathcal{A}_*$ consists of all elements of total degrees $\leq p$.  


However, in constructing the equivariant May spectral sequence $\EMaySS(\mathbb{S})$ and $\EMaySS(BP_\mathbb{R})$, we filtered $\mathcal{A}^m_\bigstar$ and $\Lambda^m_\bigstar$ by powers of $(a_\sigma)$ and produced decreasing filtrations instead.  To rectify this mismatch of filtrations, we need to change the filtration of the classical dual Steenrod algebra $\mathcal{A}_*$ to make it compatible with the decreasing filtrations on $\mathcal{A}^m_\bigstar$ and $\Lambda^m_\bigstar$.  In particular, it must be a decreasing filtration.  To do this, notice that by Theorem~\ref{thm:SteenrodAlgebraMap}, the element $h_{i, j}$ is sent to 
\begin{eqnarray} \label{hijImage}
h_{i, j} &\mapsto& \tau_{i+j-1} \us^{2^{i+j-1}-2^j}\as^{2^j-1} \, \, \, (\text{Modulo higher powers of } \as).
\end{eqnarray}
We can define a new grading on $\mathcal{A}_*$ on by setting the degree of the generators $h_{i, j}$ to be 
$$|h_{i, j}| = 2^j -1 $$
and extend linearly to all unique representatives.  The decreasing filtration associated to this is 
$$\mathcal{A}_*= F_0\mathcal{A}_* \supset F_1\mathcal{A}_* \supset F_2\mathcal{A}_* \supset \cdots, $$
where $F_p\mathcal{A}_*$ contains elements whose total degrees are $\geq p$.  From (\ref{hijImage}), it is immediate that this filtration is compatible with the decreasing filtration of $A^m_{\bigstar}$ and $\Lambda^m_\bigstar$ with respect to the maps $\mathcal{A}_* \to A^m_{\bigstar} \to \Lambda^m_\bigstar$.  Therefore, we obtain maps of May spectral sequences 
$$\begin{tikzcd}
\text{MMaySS}(\mathbb{S}) \ar[r] \ar[rd] & \EMaySS(\mathbb{S}) \ar[d] \\ 
& \EMaySS(BP_\mathbb{R}).
\end{tikzcd}$$

To compute the $E_1$-page of our modified May spectral sequence $\text{MMaySS}(\mathbb{S})$, consider the coproduct formula for $h_{i, j}$: 
\begin{eqnarray*}
\Psi(h_{i, j}) &=& 1 \otimes h_{i, j} + h_{i, j}\otimes 1+ \sum_{k =1}^{i-1} h_{i-k, j+k} \otimes h_{k, j} \\ 
&=& \underbrace{1 \otimes h_{i, j}}_{\deg =  2^j -1} + \underbrace{h_{i, j} \otimes 1}_{\deg = 2^j -1}+ \underbrace{h_{i-1, j+1} \otimes h_{1, j}}_{\deg = (2^{j+1} -1) + (2^j -1)} + \underbrace{h_{i-2, j+2} \otimes h_{2, j}}_{\deg = (2^{j+2} -1) + (2^j -1)} + \\
&&  \cdots + \underbrace{h_{1, j+i-1} \otimes h_{i-1, j}}_{\deg = (2^{j+i -1} -1) + (2^j -1)}.
\end{eqnarray*}
With the old filtration, $|h_{i,j}| = 2i -1$, and everything in the summation sign on the right is of degree exactly $2i-2$.  After changing to our new filtration, everything in this sum is of degree at least 
\begin{eqnarray*}
|h_{i-1, j+1} \otimes h_{1, j}| &=& (2^{j+1} -1)  + (2^j - 1) \\ 
&>& 2^j -1 \\ 
&=& |h_{i, j}|.
\end{eqnarray*}
Therefore after projecting to the associated-graded $\mathcal{A}_* \to \text{gr}_\bullet \mathcal{A}_*$, the elements $h_{i, j}$ are primitive.  It follows that the $E_1$-page of our modified May spectral sequence is still the polynomial algebra generated by the $h_{i,j}$: 
$$E_1 = \Ext_{\text{gr}_\bullet \mathcal{A}_*} (\F, \F) = \F[\{h_{i, j}\}_{i \geq 1, j \geq 0}] \Longrightarrow \Ext_{\mathcal{A}_*}(\F, \F).$$
The differentials obtained from the coproduct formula for $h_{i, j}$ is now of length $2^{j+1} - 1$:
$$d_{2^{j+1}-1}(h_{i, j}) = h_{i-1, j+1} \otimes h_{1, j}.$$
(before changing the filtration, it was $d_1(h_{i, j}) = \sum_{k =1}^{i-1} h_{i-k, j+k} \otimes h_{k, j}$).  Intuitively, with our new filtration, the differentials are being ``stretched out''.  

\section{Detection Theorem} \label{sec:DetectionTheorem}

\subsection{Detection Theorems for $BP_\mathbb{R}$ and $MU_\mathbb{R}$}

We will now prove our detection theorems for the Hopf, Kervaire, and $\bar{\kappa}$-family by analyzing the map of spectral sequences 
$$\text{MMaySS}(\mathbb{S}) \to \EMaySS_{\mathcal{A}^m_\bigstar}(BP_\mathbb{R}) \cong \SliceSS(BP_\mathbb{R}).$$  
Using Theorem~\ref{thm:SteenrodAlgebraMap}, the following proposition is immediate. 

\begin{prop}\label{prop:mayImages}
On the $E_2$-page of the map $\text{MMaySS}(\mathbb{S}) \to \EMaySS(BP_\mathbb{R}) \cong \SliceSS(BP_\mathbb{R})$, 
\begin{eqnarray*}
h_{1n} &\mapsto& \bar{v}_n \as^{2^n-1} ,\\ 
h_{1n}^2 &\mapsto& \bar{v}_n^2 \as^{2(2^n-1)} ,\\ 
h_{2n}^4 &\mapsto& \bar{v}_{n+1}^4 \us^{2^{n+2}} \as^{4(2^n-1)}.
\end{eqnarray*}
\end{prop}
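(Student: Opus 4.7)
The plan is to extract the result directly from Theorem~\ref{thm:SteenrodAlgebraMap}, using the multiplicativity of the map of May spectral sequences together with the identification $\EMaySS_{\mathcal{A}_\bigstar^m}(BP_\mathbb{R}) \cong \SliceSS(BP_\mathbb{R})$ from Theorem~\ref{thm:AmSliceSS}. First I would specialize Theorem~\ref{thm:SteenrodAlgebraMap} to the two relevant generators: setting $i=1$, $j=n$ gives $h_{1n} = \zeta_1^{2^n} \mapsto \tau_n\, a_\sigma^{2^n - 1}$, and setting $i=2$, $j=n$ gives $h_{2n} = \zeta_2^{2^n} \mapsto \tau_{n+1}\, u_\sigma^{2^{n+1}-2^n}\, a_\sigma^{2^n - 1} = \tau_{n+1}\, u_\sigma^{2^n}\, a_\sigma^{2^n - 1}$, both modulo higher powers of $a_\sigma$.

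Next I would pass to the May $E_2$-page via the cobar complex for $\Lambda^m_\bigstar$. By Proposition~\ref{HopfLambda}, each $\tau_i$ is primitive, so $w_i = [\tau_i]$ is a cocycle and represents a genuine class on the $E_2$-page. The composite $\mathcal{A}_* \to \mathcal{A}^m_\bigstar \to \Lambda^m_\bigstar$ is a map of Hopf algebroids, so the induced map of May spectral sequences is multiplicative; therefore, squaring the first formula and taking the fourth power of the second produces
$$h_{1n}\mapsto w_n\,a_\sigma^{2^n-1},\qquad h_{1n}^2\mapsto w_n^2\,a_\sigma^{2(2^n-1)},\qquad h_{2n}^4\mapsto w_{n+1}^4\,u_\sigma^{2^{n+2}}\,a_\sigma^{4(2^n-1)}.$$
The isomorphism of Theorem~\ref{thm:AmSliceSS} then converts each $w_i$ to the corresponding slice generator $\bar{v}_i$, yielding the three displayed formulas in the proposition.

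The only delicate point is the ``modulo higher powers of $a_\sigma$'' qualifier carried over from Theorem~\ref{thm:SteenrodAlgebraMap}. Because the May filtration is by powers of $(a_\sigma)$, any correction term sits in strictly higher May filtration than the stated leading term, and therefore lives in a strictly different bidegree on the associated-graded $E_2$-page. Consequently the image of $h_{1n}$, $h_{1n}^2$, and $h_{2n}^4$ in their respective $(s,t)$-positions is exactly the leading term computed above, with no contamination from the omitted higher-$a_\sigma$ correction terms. I do not anticipate any substantive obstacle beyond this filtration bookkeeping, since Theorem~\ref{thm:SteenrodAlgebraMap} has already done the algebraic heavy lifting and the multiplicative structure of the map of May spectral sequences handles the passage to squares and fourth powers.
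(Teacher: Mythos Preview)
Your proposal is correct and follows exactly the approach the paper intends: the paper's own proof consists of the single sentence ``Using Theorem~\ref{thm:SteenrodAlgebraMap}, the following proposition is immediate,'' and your write-up simply unpacks that immediacy by specializing to $(i,j)=(1,n)$ and $(2,n)$, invoking multiplicativity for the powers, and translating $w_i \leftrightarrow \bar{v}_i$ via Theorem~\ref{thm:AmSliceSS}. Your remark about the ``modulo higher powers of $a_\sigma$'' terms landing in strictly higher May filtration is the right way to justify why only the leading term matters on the associated-graded $E_2$-page.
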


\begin{prop}\label{prop:SliceSSsurvive}
In the slice spectral sequence of $BP_\mathbb{R}$, the classes $\bar{v}_n \as^{2^n-1}$, $\bar{v}_n^2 \as^{2(2^n-1)}$, and $\bar{v}_{n+1}^4 \us^{2^{n+2}} \as^{4(2^n-1)}$ survive to the $E_\infty$-page.  
\end{prop}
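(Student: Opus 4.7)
The plan is, for each of the three classes, to verify two complementary facts in the slice spectral sequence of $BP_\mathbb{R}$: that it is a permanent cycle, and that it is not in the image of any differential. The principal tool is the Slice Differential Theorem (Proposition~\ref{prop:BPRSliceSS}), which says that on the positive-part polynomial ring $\mathbb{Z}[\bar{v}_i, u_{2\sigma}, \as]/(2\as)$ the only differentials are generated by $d_{2^{k+1}-1}(u_{2\sigma}^{2^{k-1}}) = \bar{v}_k \as^{2^{k+1}-1}$ together with the Leibniz rule, and that $\bar{v}_i$ and $\as$ are permanent cycles.

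For $\bar{v}_n \as^{2^n-1}$ and $\bar{v}_n^2 \as^{2(2^n-1)}$, both are already permanent cycles as products of permanent cycles. To rule out their being boundaries, I would observe that any differential image lies in the principal ideal $(\bar{v}_k \as^{2^{k+1}-1})$ for some $k$, and carry out a monomial divisibility check in the polynomial ring: matching $\bar{v}$-factors forces $k=n$, but then the required $\as$-exponent $2^{n+1}-1$ exceeds the available exponents $2^n-1$ and $2^{n+1}-2$ respectively; for $k \neq n$, the generator $\bar{v}_k$ is simply not a factor.

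The delicate case is $\bar{v}_{n+1}^4 \us^{2^{n+2}} \as^{4(2^n-1)}$, which I read as $\bar{v}_{n+1}^4 u_{2\sigma}^{2^{n+1}} \as^{2^{n+2}-4}$ in the slice spectral sequence under the correspondence $\us^2 \leftrightarrow u_{2\sigma}$. Only the factor $u_{2\sigma}^{2^{n+1}}$ can possibly support a differential. Expanding $u_{2\sigma}^{2^{n+1}} = (u_{2\sigma}^{2^{k-1}})^{2^{n+2-k}}$ and applying Leibniz shows that for $k \leq n+1$ the putative differential carries an even coefficient on a positive power of $\as$, hence vanishes because $2\as=0$; the first nontrivial differential is therefore $d_{2^{n+3}-1}(u_{2\sigma}^{2^{n+1}}) = \bar{v}_{n+2}\as^{2^{n+3}-1}$. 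By Leibniz, the candidate $d_{2^{n+3}-1}$-image of our class is $\bar{v}_{n+1}^4 \bar{v}_{n+2}\,\as^{3\cdot 2^{n+2}-5}$. The crux of the proof is to exhibit an earlier differential annihilating this target:
$$d_{2^{n+2}-1}\!\left(\bar{v}_{n+1}^3 \bar{v}_{n+2}\, u_{2\sigma}^{2^n}\, \as^{2^{n+3}-4}\right) = \bar{v}_{n+1}^4 \bar{v}_{n+2}\,\as^{3\cdot 2^{n+2}-5},$$
which is immediate from $d_{2^{n+2}-1}(u_{2\sigma}^{2^n}) = \bar{v}_{n+1}\as^{2^{n+2}-1}$ and Leibniz. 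A further divisibility check confirms that the source on the left survives to page $E_{2^{n+2}-1}$. Hence the target is zero on $E_{2^{n+3}-1}$, so our class is a permanent cycle. Non-boundariness follows from the same divisibility analysis: for $k \leq n$ the factor $\bar{v}_k$ fails to divide $\bar{v}_{n+1}^4$, while for $k \geq n+1$ the required $\as$-exponent $2^{k+1}-1$ exceeds $2^{n+2}-4$.

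The main obstacle is precisely the cancellation identified in the $\bar{\kappa}$ case: one has to locate the correct earlier differential that kills the naive Leibniz obstruction. Once that is done, all remaining steps are routine bookkeeping in the polynomial ring, and in fact the survivors and cancellation patterns can be visually tracked stage by stage in Figures~\ref{fig:BPRKRSSS}--\ref{fig:BPRER3SSS}.
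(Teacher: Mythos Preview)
Your proposal is correct and follows the same approach as the paper: both invoke the Slice Differential Theorem and verify survival by direct computation in the polynomial ring. The paper's proof is a two-sentence sketch that bounds the length of any differential hitting these classes by $2^{n+1}-1$ and then defers to Figure~\ref{fig:ER2SSS}; you have filled in the details the paper omits, most notably the nontrivial check that the third class is a permanent cycle by exhibiting the earlier $d_{2^{n+2}-1}$ that kills its potential $d_{2^{n+3}-1}$-target.
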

\begin{proof}
As discussed in Proposition~\ref{prop:BPRSliceSS}, all of the differentials in the slice spectral sequence for $BP_\mathbb{R}$ are completely classified by the slice differential theorem (\cite[Theorem 9.9]{HHRKervaire}).  They are 
$$d_{2^{k+1}-1}(\us^{2^k}) = \as^{2^{k+1}-1}\bar{v}_k, \, \, \, \, \,  k \geq 1.$$
The longest possible differentials that could possibly kill the classes mentioned are differentials of length $2^{n+1}-1$.  The survival of these classes is a straightforward computation (see Figure~\ref{fig:ER2SSS}). 
\begin{figure}
\begin{center}
\makebox[\textwidth]{\includegraphics[trim={0cm 7cm 3cm 2cm},clip, scale = 0.75]{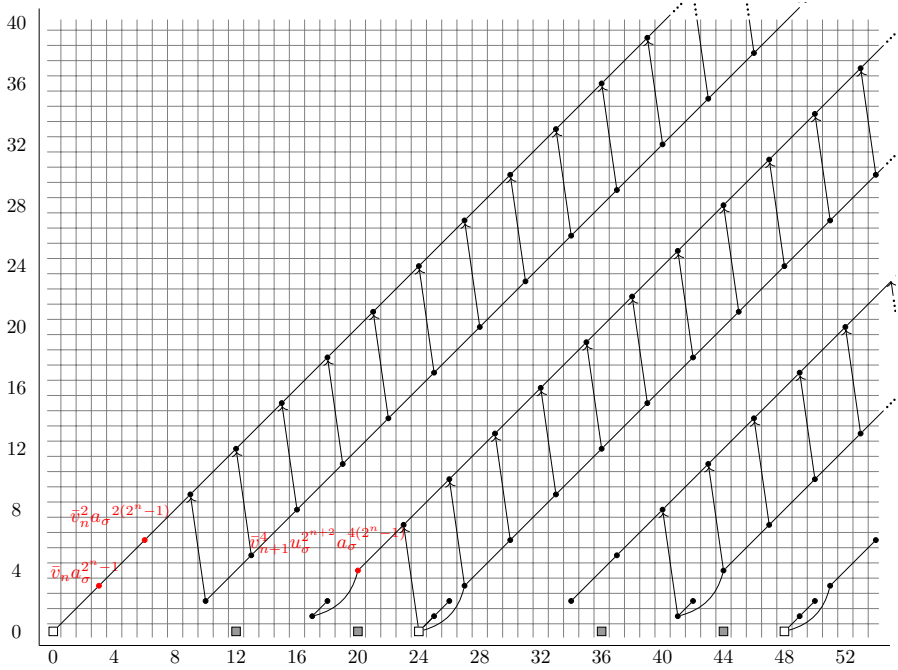}}
\end{center}
\begin{center}
\caption{A part of the slice spectral sequence for $BP_\mathbb{R}$.  The classes $\bar{v}_n \as^{2^n-1}$, $\bar{v}_n^2 \as^{2(2^n-1)}$, and $\bar{v}_{n+1}^4 \us^{2^{n+2}} \as^{4(2^n-1)}$ survive to the $E_{2^{n+1}}$-page, hence to the $E_\infty$-page.}
\label{fig:ER2SSS}
\end{center}
\end{figure}
\end{proof}

\begin{thm}[Detection of Hopf elements and Kervaire classes]\label{thm:HopfThetan}
If the element $h_{n}$ or $h_{n}^2$ in $\Ext_{\mathcal{A}_*} (\F, \F)$ survives to the $E_\infty$-page of the Adams spectral sequence, then its image under the Hurewicz map $\pi_*\mathbb{S} \to \pi_* BP_\mathbb{R}^{C_2}$ is nonzero.  
\end{thm}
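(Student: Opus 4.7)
The plan is to transport $h_n$ and $h_n^2$ through the commutative square of spectral sequences constructed in Sections~\ref{sec:EquivMaySS}--\ref{sec:MaySSMay},
$$\begin{tikzcd}
\text{MMaySS}(\mathbb{S}) \ar[r] \ar[d, Rightarrow] & \EMaySS_{\mathcal{A}^m_\bigstar}(BP_\mathbb{R}) \ar[d, Rightarrow] \\
E_2\text{-page of }\ASS(\mathbb{S}) \ar[r] & E_2\text{-page of }\EASS(BP_\mathbb{R}),
\end{tikzcd}$$
and to combine three inputs: the image formulas of Proposition~\ref{prop:mayImages}; the permanence statements of Proposition~\ref{prop:SliceSSsurvive} together with the isomorphism $\EMaySS_{\mathcal{A}^m_\bigstar}(BP_\mathbb{R}) \cong \SliceSS(BP_\mathbb{R})$ of Theorem~\ref{thm:AmSliceSS}; and the $E_2$-collapse of $\EASS(BP_\mathbb{R})$ noted in the Introduction.

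If $h_n$ (resp.\ $h_n^2$) survives to the $E_\infty$-page of $\ASS(\mathbb{S})$, it detects a class $\eta_n$ (resp.\ $\theta_n$) in $\pi_*\mathbb{S}$. First I would realize $h_n$ on the $E_2$-page of $\text{MMaySS}(\mathbb{S})$ as $h_{1n}$ and push it across the top row, producing $\bar{v}_n a_\sigma^{2^n-1}$ on the $E_2$-page of $\EMaySS_{\mathcal{A}^m_\bigstar}(BP_\mathbb{R})$ via Proposition~\ref{prop:mayImages}; likewise $h_n^2$ maps to $\bar{v}_n^2 a_\sigma^{2(2^n-1)}$. Proposition~\ref{prop:SliceSSsurvive} then shows that these images are permanent cycles in $\EMaySS_{\mathcal{A}^m_\bigstar}(BP_\mathbb{R})$, so they detect nonzero classes on the $E_2$-page of $\EASS(BP_\mathbb{R})$. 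By naturality of the May filtration under the composite $\mathcal{A}_* \to \mathcal{A}^m_\bigstar \to \Lambda^m_\bigstar$, these nonzero classes are precisely the images of $h_n$ and $h_n^2$ under the bottom horizontal arrow of the square above.

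Finally, since $\EASS(BP_\mathbb{R})$ collapses at the $E_2$-page, the images of $h_n$ and $h_n^2$ remain nonzero on the $E_\infty$-page. Strong convergence of the $C_2$-equivariant Adams spectral sequence, together with naturality with respect to the unit map $\mathbb{S} \to BP_\mathbb{R}^{C_2}$, then forces the Hurewicz images of $\eta_n$ and $\theta_n$ in $\pi_*BP_\mathbb{R}^{C_2}$ to be detected by these nonzero $E_\infty$-classes, and so in particular to be nonzero. The main obstacle is the justification of the $E_2$-collapse of $\EASS(BP_\mathbb{R})$, which follows from bidegree constraints imposed by the $\Lambda^m_\bigstar$-structure of the $E_2$-page; once this is in hand, the remaining steps are essentially bookkeeping with the identifications assembled in the preceding sections.
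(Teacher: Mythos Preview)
Your proposal is correct and follows the same route as the paper, invoking Propositions~\ref{prop:mayImages} and~\ref{prop:SliceSSsurvive} to push $h_{1n}$ and $h_{1n}^2$ to permanent nonzero classes in $\EMaySS_{\mathcal{A}^m_\bigstar}(BP_\mathbb{R})\cong\SliceSS(BP_\mathbb{R})$ and then using the $E_2$-collapse of $\EASS(BP_\mathbb{R})$. The only point the paper makes explicit that you leave implicit is the verification that $h_{1n}$ and $h_{1n}^2$ actually detect $h_n$ and $h_n^2$ in $\text{MMaySS}(\mathbb{S})$: they are permanent cycles (being primitive), cannot be boundaries by an Adams-filtration count, and are then identified with $h_n$, $h_n^2$ by uniqueness in their $\Ext$-bidegrees.
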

\begin{proof}
In the modified May spectral sequence, the classes $h_{1n}$ and $h_{1n}^2$ are permanent cycles.  Furthermore, since $h_{1n}$ is of Adams filtration 1 and $h_{1n}^2$ is of Adams filtration 2, if they are targets of differentials in the modified May spectral sequence, then the source of these differentials must be of Adams filtrations 0 and 1, respectively.  This is clearly impossible.  Therefore they are not targets of differentials and hence survive to the $E_\infty$-page of the modified May spectral sequence. 

On the $E_\infty$-page of the modified May spectral sequence, these classes represent the unique elements $h_{n} \in \Ext_{\mathcal{A}_*}^{1, 2^n}(\F, \F)$ and $h_{n}^2 \in \Ext_{\mathcal{A}_*}^{2, 2^{n+1}}(\F, \F)$ in their respective bidegrees.  The theorem now follows from Proposition~\ref{prop:mayImages} and Propostion~\ref{prop:SliceSSsurvive}.  
\end{proof}

\begin{thm}[Detection of $\bar{\kappa}$-family]\label{thm:Kappabar}
If the element $g_n \in \Ext_{\mathcal{A}_*}^{4, 2^{n+2}+2^{n+3}}(\F, \F)$ survives to the $E_\infty$-page of the Adams spectral sequence, then its image under the Hurewicz map $\pi_*\mathbb{S} \to \pi_* BP_\mathbb{R}^{C_2}$ is nonzero. 
\end{thm}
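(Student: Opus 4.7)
The plan is to parallel the proof of Theorem~\ref{thm:HopfThetan}, tracking a representative of $g_n$ through the composite of May spectral sequences
\[
\text{MMaySS}(\mathbb{S}) \longrightarrow \EMaySS(BP_\mathbb{R}) \cong \SliceSS(BP_\mathbb{R}),
\]
and invoking Propositions~\ref{prop:mayImages} and \ref{prop:SliceSSsurvive}. The natural candidate representative is $h_{2,n}^4$: a direct bidegree count places it in Adams bidegree $(4,\, 3\cdot 2^{n+2}) = (4,\, 2^{n+2}+2^{n+3})$, matching $g_n$, and by Lin's classification of $\Ext^{4,*}_{\mathcal{A}_*}(\F,\F)$ in \cite{LinExt}, $g_n$ is the unique indecomposable generator of this bidegree and is detected by $h_{2,n}^4$ on the classical May $E_\infty$-page.

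The first task is to verify that $h_{2,n}^4$ remains a permanent cycle in $\text{MMaySS}(\mathbb{S})$, whose differentials are stretched by powers of $\as$. The Leibniz rule gives
\[
d_r(h_{2,n}^4) = 4\, h_{2,n}^3\, d_r(h_{2,n}) \equiv 0 \pmod 2,
\]
and since $h_{2,n}^4$ sits in Adams filtration $4$, a scan of Lin's calculation of $\Ext^{3,*}_{\mathcal{A}_*}(\F,\F)$ rules out any incoming MMaySS-differential. Proposition~\ref{prop:mayImages} then says that the image of $h_{2,n}^4$ in $\EMaySS(BP_\mathbb{R}) \cong \SliceSS(BP_\mathbb{R})$ is
\[
\bar v_{n+1}^4\, \us^{2^{n+2}}\, \as^{4(2^n-1)},
\]
which by Proposition~\ref{prop:SliceSSsurvive} is a permanent cycle in $\SliceSS(BP_\mathbb{R})$. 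Through Theorem~\ref{thm:AmSliceSS}, it represents a nonzero class in $\Ext_{\Lambda^m_\bigstar}(\Hm_\bigstar, \Hm_\bigstar) = E_2(\EASS(BP_\mathbb{R}))$, and since $\EASS(BP_\mathbb{R})$ collapses at $E_2$ for degree reasons, any permanent cycle $\bar\kappa_n$ represented by $g_n$ in the classical Adams spectral sequence has nonzero Hurewicz image in $\pi_*^{C_2} BP_\mathbb{R}$.

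The main obstacle is the bookkeeping that $g_n$ really corresponds to $h_{2,n}^4$ at the $E_\infty$-level of $\text{MMaySS}(\mathbb{S})$. In principle $g_n$ could be represented in $\text{MMaySS}(\mathbb{S})$ by $h_{2,n}^4$ plus monomials in strictly higher May filtration, carrying additional powers of $\as$. However, by Theorem~\ref{thm:SteenrodAlgebraMap} any such correction term maps to a class in strictly higher slice filtration in $\SliceSS(BP_\mathbb{R})$, so it cannot cancel the leading image $\bar v_{n+1}^4\, \us^{2^{n+2}}\, \as^{4(2^n-1)}$ on the $E_\infty$-page. This filtration separation is what ultimately makes the detection theorem for the $\bar\kappa$-family work.
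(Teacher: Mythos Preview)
Your Leibniz-rule step is the genuine gap.  The formula $d_r(h_{2,n}^4)=4\,h_{2,n}^3\,d_r(h_{2,n})\equiv 0$ is only valid on an $E_r$-page on which $h_{2,n}$ itself is still alive.  In the modified May spectral sequence, the coproduct formula gives
\[
d_{2^{n+1}-1}(h_{2,n}) = h_{1,n+1}\,h_{1,n}\neq 0,
\]
so $h_{2,n}$ dies on $E_{2^{n+1}-1}$.  After that page $h_{2,n}^2$ and $h_{2,n}^4$ persist, but they are no longer literal products of surviving classes, and there is no Leibniz identity available to kill their higher differentials.  The paper itself exhibits the failure of your argument one step down: it computes (via the cobar complex) that
\[
d_7(h_{21}^2) = h_{12}^3 + h_{11}^2 h_{13}\neq 0,
\]
whereas your reasoning applied to $h_{21}^2$ would have given $d_r(h_{21}^2)=2\,h_{21}\,d_r(h_{21})=0$ for all $r$.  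So the permanence of $h_{2,n}^4$ requires real work.

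The paper supplies this work in two stages.  First (Lemma~\ref{lem:h2n1}) it treats the base case $n=1$ by an explicit enumeration: it lists every monomial in tridegree $(5,24,4+r)$ for $r=9,11,13,15$ and shows each either supports a shorter differential or is already a boundary, so $h_{21}^4$ is a permanent cycle.  Second (Proposition~\ref{prop:h2n4}) it propagates this to all $n$ using Nakamura's $\Sq^0$-operation on the cobar complex, together with the degree inequality of Lemma~\ref{lem:hijIneq} to check that $\Sq^0$ preserves the leading-term condition in the modified filtration.  For ``not a target'', the paper does not scan $\Ext^{3,*}$ but instead observes that the image $\bar v_{n+1}^4 u_\sigma^{2^{n+2}} a_\sigma^{4(2^n-1)}$ survives in $\SliceSS(BP_\mathbb{R})$, so $h_{2,n}^4$ cannot be hit in $\text{MMaySS}(\mathbb{S})$ either.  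Your last paragraph about higher-filtration correction terms is fine, but it only becomes relevant once the permanent-cycle claim is established by these other means.
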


\begin{rem}\rm
As mentioned previously, the element $g_1$ survives to the element $\bar{\kappa} \in \pi_{20}\mathbb{S}$.  The element $g_2$ also survives to an element in $\pi_{44}\mathbb{S}$.  They are both detected in $\pi_* BP_\mathbb{R}^{C_2}$.  The fate of the $g_n$ elements for $n \geq 3$ is unknown.
\end{rem}

The proof of Theorem~\ref{thm:Kappabar} requires the following facts: 

\begin{lem}\label{lem:gn} 
The element $g_n$ is the only nonzero element in $\Ext_{\mathcal{A}_*}^{4, 2^{n+2}+2^{n+3}}(\F, \F)$.  
\end{lem}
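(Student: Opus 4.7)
The plan is to invoke Lin's complete determination of $\Ext^{s,t}_{\mathcal{A}_*}(\F,\F)$ for $s \leq 4$, established in \cite{LinExt}. Lin computes an explicit additive basis for the first four lines of the classical Adams $E_2$-page, and the element $g_n$ is by construction the indecomposable generator living in $\Ext^{4, 2^{n+2}+2^{n+3}}_{\mathcal{A}_*}(\F, \F)$. The uniqueness asserted by the lemma is then a direct inspection of Lin's basis in the specified bidegree, where one verifies that no other element appears.

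For a self-contained argument avoiding Lin, I would use the classical May spectral sequence with $E_1$-page $\F[h_{i,j}]_{i \geq 1,\, j \geq 0}$, where $h_{i,j}$ has Adams bidegree $(s,t) = (1, 2^j(2^i-1))$. The first step is to enumerate all monomials of Adams bidegree $(4, 3 \cdot 2^{n+2})$; this amounts to listing all four-element multisets $\{(i_a, j_a)\}_{a=1}^4$ satisfying
\[
\sum_{a=1}^4 2^{j_a}(2^{i_a}-1) \;=\; 3 \cdot 2^{n+2}.
\]
This is a finite problem for each $n$. The second step is to run the May differentials. The key relation $d_1(h_{2,j}) = h_{1,j} h_{1,j+1}$, together with its higher analogues coming from $d_1(h_{i,j}) = \sum_{k=1}^{i-1} h_{k,j} h_{i-k, j+k}$, immediately eliminates large families of candidates (for instance anything containing the factor $h_{1,j} h_{1,j+1}$). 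One then tracks the higher May differentials until only the cocycle representing $g_n$ survives to the $E_\infty$-page in this bidegree.

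The principal obstacle for the direct May spectral sequence approach is the combinatorial bookkeeping. As $n$ grows, the number of monomials of the prescribed bidegree increases, and one must carefully check that each spurious candidate is either killed by some $d_r$ or lies in the image of one --- including the cancellations forced by the Leibniz rule. Lin's classification already packages this work in a uniform way, and therefore appealing to it is the most efficient route to the lemma.
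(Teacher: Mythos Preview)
Your proposal is correct and follows essentially the same route as the paper: both appeal to Lin's classification \cite{LinExt} to conclude that $g_n$ is the unique nonzero class in this bidegree. The paper's proof is slightly more explicit in that it names the only candidate decomposables, $h_n c_n$ and $h_{n+1}^2 h_{n+2}^2$, and notes that both vanish by known relations in $\Ext^{*,*}_{\mathcal{A}_*}(\F,\F)$; your alternative May spectral sequence sketch is not needed and is not pursued in the paper.
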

\begin{proof}
We appeal to the classification theorem of Lin (\cite[Theorem 1.3]{LinExt}).  For indecomposable elements in $\Ext^{4, t}_{\mathcal{A}_*}(\F, \F)$, binary expansion of $t$ shows that $g_n$ is the only indecomposable element in $\Ext^{4, 2^{n+2}+ 2^{n+3}}_{\mathcal{A}_*}(\F, \F)$.  The only possible decomposable elements in the same bidegree as $g_n$ are $h_nc_n$ and $h_{n+1}^2h_{n+2}^2$.  However, they are both zero due to relations in $\Ext^{*,*}_{\mathcal{A}_*}(\F, \F)$.  
\end{proof}

\begin{lem}\label{lem:hijIneq}
Let $1 \leq m \leq n$, if 
$$\deg(h_{i_1, j_1} h_{i_2, j_2} \cdots h_{i_m, j_m}) < \deg (h_{i_1', j_1'} h_{i_2', j_2'} \cdots h_{i_n', j_n'}),$$
then for any $k \geq 1$, 
$$\deg(h_{i_1, j_1+k} h_{i_2, j_2+k} \cdots h_{i_m, j_m+k}) < \deg (h_{i_1', j_1'+k} h_{i_2', j_2'+k} \cdots h_{i_n', j_n'+k}).$$
\end{lem}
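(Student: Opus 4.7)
The plan is to reduce the statement to an elementary arithmetic inequality about sums of powers of $2$ and then exploit the hypothesis $m\leq n$ together with integrality. By the definition of the new May grading in Section~\ref{sec:ChangeofFiltration}, $|h_{i,j}|=2^j-1$ depends only on $j$. Setting $A:=\sum_{l=1}^{m}2^{j_l}$ and $B:=\sum_{l=1}^{n}2^{j_l'}$, the hypothesized degree inequality becomes $A-m<B-n$, i.e., $A-B<m-n$. Shifting every $j$-index by $k$ multiplies both $A$ and $B$ by $2^k$, so the desired conclusion unpacks to
\[
2^k(A-B)<m-n.
\]

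Next, I would exploit integrality. From $A-B<m-n\leq 0$ (the second inequality uses $m\leq n$), we may strengthen the bound to $A-B\leq m-n-1\leq -1$. Multiplying by $2^k\geq 1$ gives $2^k(A-B)\leq 2^k(m-n-1)$. Because $m-n-1\leq -1<0$ and $2^k\geq 1$, we have $2^k(m-n-1)\leq m-n-1<m-n$, so $2^k(A-B)<m-n$ as required.

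The hypothesis $m\leq n$ is essential: it is precisely what prevents the constant corrections $-m$ and $-n$ from flipping the direction of the inequality after exponential rescaling by $2^k$. (For $m$ sufficiently larger than $n$, explicit counterexamples are easy to construct: e.g., taking all $j$-indices equal to $1$ on the left and a single large $j'$-index on the right, the left degree grows like $m\cdot 2^k$ while the right grows only like a single $2^{j'+k}$.) No serious obstacle is anticipated beyond keeping careful track of the arithmetic; the entire argument is elementary and self-contained.
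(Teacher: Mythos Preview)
Your proof is correct and follows essentially the same arithmetic reduction as the paper: translate the degree inequality into $A-m<B-n$ with $A=\sum 2^{j_l}$ and $B=\sum 2^{j_l'}$, then rescale by $2^k$ and use $m\leq n$. The only minor difference is that you invoke integrality to pass from $A-B<m-n$ to $A-B\leq m-n-1$, whereas the paper (and you could too) simply multiplies $A-B<m-n$ by $2^k$ and observes $2^k(m-n)\leq m-n$ since $m-n\leq 0$; the integrality detour is harmless but unnecessary.
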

\begin{proof}
The given condition translates to the inequality 
$$\sum (2^{j_m} -1) < \sum (2^{j_n'} -1),$$
which, after rearranging, is 
$$ \sum 2^{j_m} < \left(\sum 2^{j_n'}\right) - (n-m).$$
Multiplying both sides of the inequality by $2^k$ produces the new inequality 
\begin{eqnarray}
\sum 2^{j_m + k} &<& \left(\sum 2^{j_n' + k} \right) - 2^k(n-m) \nonumber \\
&<& \left(\sum 2^{j_n'+k} \right) - (n-m). \nonumber
\end{eqnarray}
Rearranging this inequality gives
$$\sum (2^{j_m + k} -1) < \sum (2^{j_n' + k} -1), $$
as desired. 
\end{proof}

\begin{lem}\label{lem:h2n1}
The element $h_{21}^4$ is a permanent cycle in the modified May spectral sequence of the sphere.  
\end{lem}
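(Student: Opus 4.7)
The plan is to show $d_r(h_{21}^4) = 0$ for every $r \geq 1$, splitting the argument according to the range of $r$.

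For $r \leq 3$, the claim is a direct Leibniz computation. May differentials act as derivations on each $E_r$-page, and in characteristic $2$ we have
\[ d_r(h_{21}^4) = 4\, h_{21}^3 \, d_r(h_{21}) = 0. \]
The only nonvanishing $d_r(h_{21})$ in this range is $d_3(h_{21}) = h_{12}h_{11}$, which is the specialization of the formula $d_{2^{j+1}-1}(h_{i,j}) = h_{i-1,j+1}\,h_{1,j}$ to $(i,j) = (2,1)$, but the factor of $4$ annihilates the Leibniz term. So $h_{21}^4$ survives to $E_4$.

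For $r \geq 4$, the generator $h_{21}$ has been killed by the $d_3$-differential above, so the derivation argument no longer applies. My approach here is to exploit the map of May spectral sequences
\[ \text{MMaySS}(\mathbb{S}) \longrightarrow \EMaySS_{\mathcal{A}^m_\bigstar}(BP_\mathbb{R}) \cong \SliceSS(BP_\mathbb{R}) \]
constructed in Section~\ref{sec:MaySSMay}. By Proposition~\ref{prop:mayImages}, $h_{21}^4$ maps to $\bar{v}_2^4 u_\sigma^8 a_\sigma^4$, which is a permanent cycle in $\SliceSS(BP_\mathbb{R})$ by Proposition~\ref{prop:SliceSSsurvive}. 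If one had $d_r(h_{21}^4) = z \neq 0$ in $\text{MMaySS}(\mathbb{S})$ for some $r \geq 4$, then the image of $z$ in $\SliceSS(BP_\mathbb{R})$ would have to vanish, since it is $d_r$ applied to a permanent cycle. The plan is then to enumerate the potential targets at Adams bigrade $(5,24)$ and modified May filtration $4+r$ in $\F[h_{ij}]$, use Theorem~\ref{thm:SteenrodAlgebraMap} to compute their images as monomials in $\Hm_\bigstar[w_i]$, and use the slice differentials of Proposition~\ref{prop:BPRSliceSS} to verify that any such $z$ with vanishing image in $\SliceSS(BP_\mathbb{R})$ is already zero on the $E_r$-page of $\text{MMaySS}(\mathbb{S})$.

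The principal obstacle is precisely this last case analysis in high modified May filtration. The map $\F[h_{ij}] \to \Hm_\bigstar[w_i]$ is not injective on individual monomials: distinct $h_{ij}$-monomials whose factors share the same multiset of $i+j-1$ values have coinciding $w$-images up to powers of $u_\sigma$ and $a_\sigma$, so one cannot simply observe that every candidate target has nonzero slice-SS image. Instead one must track, page by page, which linear combinations of monomials are hit by slice differentials and which survive, and then use the sparse structure of $\SliceSS(BP_\mathbb{R})$ in the relevant bidegree to force the kernel of $\text{MMaySS}(\mathbb{S}) \to \SliceSS(BP_\mathbb{R})$ at $E_r$ to consist entirely of boundaries. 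Once this bookkeeping is complete, $z$ must already vanish on $E_r$, and $h_{21}^4$ is a permanent cycle.
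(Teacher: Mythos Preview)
Your Leibniz argument for $r\le 3$ is fine, and you are right that the map to $\SliceSS(BP_\mathbb{R})$ forces the image of any hypothetical $z=d_r(h_{21}^4)$ to vanish there. But this constraint alone does not finish the proof, and the ``bookkeeping'' you defer is the entire content of the lemma.

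The difficulty is that the map $\text{MMaySS}(\mathbb{S})\to\SliceSS(BP_\mathbb{R})$ is far from injective on later pages in this bidegree. For example, any candidate target containing a factor $h_{1,0}$ maps to something divisible by $w_0$, and $w_0 a_\sigma$ already dies on $E_2$ of the slice side via $d_1(u_\sigma)=w_0 a_\sigma$; so its slice image is zero automatically, giving you no information. You still have to show, inside $\text{MMaySS}(\mathbb{S})$, that each such candidate is either a non-cycle or a boundary on $E_r$. That is exactly the case analysis the paper carries out, and nothing about the slice map shortcuts it. Your last paragraph essentially promises to redo that analysis (``track, page by page, which linear combinations\ldots'') but does not do it.

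The paper's approach differs from yours in two ways. First, it pushes the Leibniz argument further: one checks $d_5(h_{21}^2)=0$ by a small target enumeration at $(3,12,7)$ and then computes $d_7(h_{21}^2)=h_{12}^3+h_{11}^2h_{13}$ in the cobar complex, so $d_r(h_{21}^4)=0$ for all $r\le 7$ by squaring. Second, for $r\ge 9$ it bounds $r\le 15$ by a simple degree inequality and then enumerates the monomials at $(s,t,m)=(5,24,4+r)$ for $r=9,11,13,15$ directly in $\F[h_{ij}]$, showing each either supports a shorter differential or is hit by one. The slice spectral sequence plays no role here; in fact the paper only invokes $\SliceSS(BP_\mathbb{R})$ later, in Proposition~\ref{prop:h2n4}, and there it is used for the opposite purpose (to show $h_{2n}^4$ is not the \emph{target} of a differential), which is the direction in which a nonvanishing slice image gives immediate information.
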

\begin{proof}
It is clear from the modified May filtration that all the differentials are of odd length.  First, we have 
$$d_3(h_{21}) = h_{11}h_{12},$$
so $d_3(h_{21}^2) = 0$.  In fact, $d_5(h_{21}^2) = 0$ as well.  To show this, notice that $h_{21}^2$ is in tridegree $(s, t, m) = (2, 12, 2)$, where $(s, t)$ is the degree associated to $\Ext_{\mathcal{A}_*}^{s, t}(\F, \F)$ and $m$ is the modified May filtration.  If $h_{21}^2$ supports a $d_5$, then the target must be of tridegree $(3, 12, 7)$.  We will characterize all $h_{i_1j_1}h_{i_2j_2}h_{i_3j_3}$ of this tridegree.  The equations that need to be satisfied are 
\begin{eqnarray*}
(2^{i_1} -1)2^{j_1} + (2^{i_2}-1)2^{j_2} + (2^{i_3} -1)2^{j_3} &=& 12 , \\ 
(2^{j_1} -1) + (2^{j_2}-1) + (2^{j_3} -1) &=& 7.
\end{eqnarray*}
Since $7 = 7 + 0 + 0 = 3+3+1$, it's not hard to check that the only possibility for $h_{i_1j_1}h_{i_2j_2}h_{i_3j_3}$ is $h_{10}h_{20}h_{13}$.  However, this element cannot be the target of a differential because it supports a nontrivial $d_1$-differential 
$$d_1(h_{10}h_{20}h_{13}) = h_{10}^2h_{11}h_{13}.$$
Therefore $d_5(h_{21}^2) = 0$.  

By computations in the cobar complex $C(\mathcal{A}_*)$ (see \cite[Lemma 3.2.10(b)]{RavenelGreen}), we deduce that 
$$d_7(h_{21}^2) = h_{12}^3 + h_{11}^2 h_{13}.$$
(Note that the computation in the cobar complex also shows that $h_{21}^2$ is a $d_5$-cycle.)  By the Leibneiz rule, this differential implies that $d_7(h_{21}^4) = 0$.  

The element $h_{21}^4$ is in tridegree $(s, t, m) = (4, 24, 4)$.  The target of a differential $d_r$ with source $h_{21}^4$ must be of tridegree $(5, 24, 4+r)$.  In particular, it must be a linear combination of elements of the form $h_{i_1j_1} \cdots h_{i_5j_5}$ satisfying the equations
\begin{eqnarray} 
\sum_{k = 1}^5 (2^{i_k} -1)2^{j_k} = 24, \label{eq:5hijRelation2} \\ 
\sum_{k =1}^5 (2^{j_k} -1) = 4 + r. \label{eq:5hijRelation}
\end{eqnarray}
Since $d_7(h_{21}^4) = 0$, $r$ must be at least 9.  Moreover, subtracting Equation~(\ref{eq:5hijRelation}) from Equation~(\ref{eq:5hijRelation2}) yields the equation 
$$\sum_{k = 1}^5 (2^{i_k + j_k} - 2^{j_k+1} + 1) = 20 -r.$$
The left hand side is at least 5 because $i_k \geq 1$.  It follows that $r \leq 15$.  We will now rule out each possibility in the range $9 \leq r \leq 15$ case-by-case: \\

\noindent \textbf{Case 1:} $r = 9$.  Equation~(\ref{eq:5hijRelation}) becomes 
$$\sum_{k =1}^5 (2^{j_k} -1) = 13.$$
The possibilities are
\begin{eqnarray}
13 &=& 7 + 3 + 3 + 0 + 0  \nonumber \\ 
&=& 7+ 3 + 1 + 1 + 1 \nonumber \\ 
&=& 3 + 3 + 3 + 3 + 1. \nonumber
\end{eqnarray}
The second and third possibilities give nothing.  The first possibility gives $h_{10}h_{30}h_{12}^2h_{13}$, which supports a nontrivial $d_1$ differential 
$$d_1(h_{10}h_{30}h_{12}^2h_{13}) = h_{10}^2h_{21}h_{12}^2h_{13}.$$
Therefore $d_9(h_{21}^4) = 0$.  \\

\noindent \textbf{Case 2:} $r = 11$.  Equation~(\ref{eq:5hijRelation2}) becomes 
$$\sum_{k =1}^5 (2^{j_k} -1) = 15.$$
The possibilities are 
\begin{eqnarray}
15 &=& 15 + 0 + 0 + 0  + 0 \nonumber \\
&=& 7 + 7 + 1 + 0 + 0 \nonumber \\ 
&=& 7 + 3 + 3 + 1 + 1 \nonumber \\ 
&=& 3 + 3 + 3 + 3 + 3.\nonumber 
\end{eqnarray}
The first possibility gives $h_{10}^2h_{20}^2h_{14}$.  The second possibility gives $h_{10}^2h_{21}h_{13}^2$ and $h_{11}h_{20}^2h_{13}^2$.  The third possibility gives $h_{11}h_{21}h_{12}^2h_{13}$.  The fourth possibility gives nothing.  Now, we will rule them out one by one.  
\begin{itemize}
\item For $h_{10}^2h_{20}^2h_{14}$, we can first argue using the cobar complex that 
$$d_3(h_{20}^2) = h_{10}^2h_{12} + h_{11}^3.$$
So there is a nontrivial $d_3$-differential 
\begin{eqnarray*}
d_3(h_{10}^2h_{20}^2h_{14})  &=& h_{10}^2h_{14}(h_{10}^2h_{12} + h_{11}^3) \\ 
 &=& h_{10}^4h_{12}h_{14} + h_{10}^2h_{11}^3h_{14} \\ 
 &=& h_{10}^4h_{12}h_{14}. 
\end{eqnarray*}
(the element $h_{10}^2h_{11}^3h_{14} = 0$ on the $E_3$-page because $d_1(h_{20}) = h_{10}h_{11}$).
\item The element $h_{10}^2h_{21}h_{13}^2$ is the target of the $d_1$-differential
$$d_1(h_{10}h_{30}h_{13}^2) = h_{10}^2 h_{21}h_{13}^2,$$
and hence 0 on the $E_3$-page.
\item The element $h_{11}h_{20}^2h_{13}^2$ supports a nontrivial $d_3$-differential
$$d_3(h_{11}h_{20}^2h_{13}^2) = h_{11}h_{13}^2(h_{10}^2h_{12} + h_{11}^3) = h_{11}^4h_{13}^2$$
(the first term in the sum is 0 because $h_{10}h_{11} = 0$ on the $E_3$-page). 
\item The element $h_{11}h_{21}h_{12}^2h_{13}$ supports a nontrivial $d_3$ differential 
$$d_3(h_{11}h_{21}h_{12}^2h_{13}) = h_{11}h_{12}^2h_{13}\, d_3(h_{21})= h_{11}^2h_{12}^3h_{13},$$
hence does not survive past the $E_3$-page. 
\end{itemize}
Therefore $d_{11}(h_{21}^4) = 0$. \\

\noindent \textbf{Case 3:} $r = 13$.  Equation~(\ref{eq:5hijRelation}) becomes 
$$\sum_{k =1}^5 (2^{j_k} -1) = 17.$$
The possibilities are 
\begin{eqnarray*}
17 &=& 15 + 1 + 1 + 0 + 0 \\ 
&=& 7 + 7 + 3 + 0 + 0 \\ 
&=& 7 + 7 + 1 + 1 + 1 \\ 
&=& 7 + 3 + 3 + 3 + 1.
\end{eqnarray*}
The first possibility gives $h_{10}h_{20}h_{11}^2h_{14}$.  The second possibility gives $h_{10}h_{20}h_{12}h_{13}^2$.  The third and fourth possibilities give nothing.  Both elements support nontrivial $d_1$ differentials: 
\begin{eqnarray*}
d_1(h_{10}h_{20}h_{11}^2h_{14}) &=& h_{10}^2h_{11}^3h_{14},\\ 
d_1(h_{10}h_{20}h_{12}h_{13}^2) &=& h_{10}^2h_{11}h_{12}h_{13}^2.
\end{eqnarray*}
Therefore $d_{13}(h_{21}^4) = 0$.  \\

\noindent \textbf{Case 4:} $r = 15$.  Equation~(\ref{eq:5hijRelation}) becomes 
$$\sum_{k =1}^5 (2^{j_k} -1) = 19.$$
The possibilities are 
\begin{eqnarray*}
19 &=& 15 + 3 + 1 + 0 + 0 \\ 
&=& 15 + 1 + 1 + 1 + 1\\ 
&=& 7 + 7 + 3 + 1 + 1 \\ 
&=& 7+3+3+3+3.
\end{eqnarray*}
The first possibility gives $h_{10}^2h_{11}h_{12}h_{14}$.  The second possibility gives $h_{11}^4h_{14}$.  The third possibility gives $h_{11}^2h_{12}h_{13}^2$.  The fourth possibility gives $h_{12}^4h_{13}$.  They do not survive because of the following differentials: 
\begin{eqnarray*}
d_1(h_{10}h_{20}h_{12}h_{14}) &=& h_{10}^2h_{11}h_{12}h_{14}, \\ 
d_3(h_{11}h_{20}^2h_{14}) &=& h_{11}h_{14} \, d_3(h_{20}^2) \\ 
&=& h_{11}h_{14}(h_{10}^2h_{12} + h_{11}^3)\\ 
&=& h_{11}^4h_{14},\\ 
d_3 (h_{11}h_{21}h_{13}^2) &=& h_{11}^2h_{12}h_{13}^2, \\
d_7(h_{12}^3h_{22}) &=& h_{12}^4h_{13}. 
\end{eqnarray*}
Therefore $d_{15}(h_{21}^4) = 0$.  This concludes the proof of the Lemma.  
\end{proof}

\begin{prop}\label{prop:h2n4}
For $n \geq 1$, the elements $h_{2n}^4$ survive to the $E_\infty$-page of the modified May spectral sequence of the sphere. 
\end{prop}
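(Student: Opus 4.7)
The plan is induction on $n$, with base case $n=1$ provided by Lemma~\ref{lem:h2n1}. For the inductive step, I translate the argument of that lemma via the shift $h_{i,j} \mapsto h_{i, j + (n-1)}$. The two equations defining a potential target $h_{i_1,j_1}\cdots h_{i_5,j_5}$ of a differential $d_r(h_{2n}^4)$ in tridegree $(5,\, 3 \cdot 2^{n+2},\, 4(2^n-1)+r)$, namely
\[
\sum_{k=1}^5 (2^{i_k}-1)\, 2^{j_k} = 3 \cdot 2^{n+2}, \qquad \sum_{k=1}^5 (2^{j_k}-1) = 4(2^n-1) + r,
\]
reduce under the substitution $j_k = j_k' + (n-1)$ to the $n=1$ equations with $r = 2^{n-1}(r'+1) - 1$. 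Hence targets with all $j_k \geq n-1$ are in bijection with the ones enumerated in the proof of Lemma~\ref{lem:h2n1}, and the bound $r \leq 2^{n+3}-1$ follows from $i_k \geq 1$ exactly as in the base case.

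I would begin by disposing of the low-$r$ cases. The basic differential $d_{2^{n+1}-1}(h_{2n}) = h_{1n} h_{1,n+1}$ together with Leibniz gives $d_{2^{n+1}-1}(h_{2n}^4) = 0$, while the $(n-1)$-shift of the cobar identity from Lemma~\ref{lem:h2n1} yields
\[
d_{2^{n+2}-1}(h_{2n}^2) = h_{1,n+1}^3 + h_{1,n}^2 h_{1,n+2},
\]
and hence $d_{2^{n+2}-1}(h_{2n}^4) = 0$. A short direct enumeration at the $r'=5$ level, mirroring the base-case argument that $d_5(h_{21}^2) = 0$, handles the remaining small values. For each $r = 2^{n-1}(r'+1) - 1$ with $r' \in \{9, 11, 13, 15\}$, every candidate target is the $(n-1)$-shift of one from Lemma~\ref{lem:h2n1}, and the supporting differential used to kill it shifts as well. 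For instance, $d_1(h_{10} h_{30} h_{12}^2 h_{13}) = h_{10}^2 h_{21} h_{12}^2 h_{13}$ shifts to a $d_{2^n-1}$-differential killing the shifted target, and $d_3(h_{20}^2) = h_{10}^2 h_{12} + h_{11}^3$ shifts to $d_{2^{n+1}-1}(h_{2,n-1}^2) = h_{1,n-1}^2 h_{1,n+1} + h_{1,n}^3$, which by Leibniz is the required identity to eliminate the shifted version of the $r'=11$ target $h_{10}^2 h_{20}^2 h_{14}$.

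The main obstacle is twofold. First, targets in which some $j_k < n-1$ fall outside the shift bijection and need a direct argument; here any such factor $h_{i, j_k}$ sits at low modified filtration, and either supports a short $d_{2^{j_k+1}-1}$-differential via the coproduct formula $d_{2^{j+1}-1}(h_{i,j}) = h_{i-1, j+1} h_{1, j}$, or is itself already a boundary of such a differential, so the target dies well before the page $r = 2^{n-1}(r'+1)-1$ we care about. Second, the timing of each supporting differential in the shifted argument must strictly precede the page under scrutiny; this is guaranteed by Lemma~\ref{lem:hijIneq}, which is precisely the statement that strict inequalities between modified filtrations are preserved under the shift $j \mapsto j+k$. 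Careful case-by-case bookkeeping of these two points, following the four subcases $r' \in \{9,11,13,15\}$ of Lemma~\ref{lem:h2n1}, completes the induction.
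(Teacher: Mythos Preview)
Your proposal has a genuine gap and also differs substantially from the paper's argument. First, the gap: surviving to $E_\infty$ requires both that $h_{2n}^4$ is a permanent cycle \emph{and} that it is not a boundary. You address only the former. The paper handles the latter by observing that under the map $\text{MMaySS}(\mathbb{S}) \to \SliceSS(BP_\mathbb{R})$ the image $\bar{v}_{n+1}^4 u_\sigma^{2^{n+2}} a_\sigma^{4(2^n-1)}$ survives (Proposition~\ref{prop:SliceSSsurvive}), so $h_{2n}^4$ cannot be hit. You never mention this direction.

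Second, your permanent-cycle argument is the paper's $\Sq^0$ argument in disguise, but carried out laboriously and with a loose end. The paper observes that Nakamura's operation $\Sq^0$ on the cobar complex commutes with the coboundary, so if $x$ is a cocycle with leading term $\xi_2^2|\xi_2^2|\xi_2^2|\xi_2^2$ (which exists by Lemma~\ref{lem:h2n1}), then $(\Sq^0)^{n-1}(x)$ is a cocycle with leading term $\xi_2^{2^n}|\xi_2^{2^n}|\xi_2^{2^n}|\xi_2^{2^n}$; Lemma~\ref{lem:hijIneq} guarantees the leading term really is leading in the modified filtration. This is a one-line argument once the machinery is in place. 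Your plan instead transports each individual case of Lemma~\ref{lem:h2n1} by the shift $j\mapsto j+(n-1)$, which is exactly what $\Sq^0$ does, but you must then separately verify that every auxiliary differential (e.g.\ the shifted $d_3(h_{20}^2)$) still holds, and you must deal with potential targets having some $j_k < n-1$. Your treatment of the latter---``either supports a short differential or is itself a boundary''---is not a proof: a factor $h_{1,j}$ with small $j$ is a permanent cycle that neither supports nor receives anything, and a monomial containing such a factor may well be a nonzero permanent cycle on the relevant page. You would need a genuine enumeration and elimination of these extra cases, which for growing $n$ becomes unbounded. The $\Sq^0$ argument sidesteps this entirely by working at the cochain level rather than page by page.
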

\begin{proof}
We will first show that for all $n \geq 1$, $h_{2n}^4$ is not the target of a differential.  By Proposition~\ref{prop:mayImages} and Proposition~\ref{prop:SliceSSsurvive}, the image of $h_{2n}^4$ on the $E_2$-page under the map $\text{MMaySS}(\mathbb{S}) \to \SliceSS(BP_\mathbb{R})$ is $\bar{v}_{n+1}^4 \us^{2^{n+2}} \as^{4(2^n-1)}$, which survives to the $E_\infty$-page of the slice spectral sequence.  However, if $h_{2n}^4$ is the target of a differential $d_r$, then its image must also be the target of a differential $d_{r'}$, with $r' \leq r$.  This is a contradiction.  Therefore $h_{2n}^4$ is not the target of a differential.  

We now show that $h_{2n}^4$ is also a permanent cycle.  By Lemma~\ref{lem:h2n1}, $h_{21}^4$ is a permanent cycle.  This means that we can find an element $x$ in the cobar complex $C(\mathcal{A}_*)$ of the form
$$x = \underbrace{\xi_2^{2^1} | \xi_2^{2^1} | \xi_2^{2^1} | \xi_2^{2^1}}_{\deg= 4 \cdot (2^1 -1) = 4} + \underbrace{S}_{\deg > 4} $$
such that $d(x) = 0$.  In the expression for $x$, $S$ is a sum containing elements of the form 
$$s = h_{i_1,j_1} \cdots | \cdots | \cdots | \cdots h_{i_k,j_k},$$
with $k \geq 4$ and 
\begin{eqnarray} \label{eqn:h21}
\deg(\xi_2^{2^1}|\xi_2^{2^1}|\xi_2^{2^1}|\xi_2^{2^1}) &=& (2^1 -1) + (2^1 - 1) + (2^1 - 1) + (2^1 -1) \nonumber \\ 
&<& \sum_{k} (2^{j_k}-1) \nonumber \\ 
&=& \deg (s).
\end{eqnarray}
 To show that $h_{2n}^4$ is a permanent cycle, we apply the $\Sq^0$-operation introduced by Nakamura \cite{Nakamura} $(n-1)$-times to the element $x$, obtaining a new element
$$(\Sq^0)^{n-1}(x) = \underbrace{\xi_2^{2^n}|\xi_2^{2^n}|\xi_2^{2^n}|\xi_2^{2^n}}_{\deg = 4 \cdot (2^n -1)} + \underbrace{(\Sq^0)^{n-1}(S)}_{\deg > 4\cdot(2^n-1)}$$
in the cobar complex.  Everything in $(\Sq^0)^{n-1}(S)$ is of the form 
$$(\Sq^0)^{n-1} (s) =  h_{i_1,j_1+n-1} \cdots | \cdots | \cdots | \cdots h_{i_k,j_k + n-1}.$$
Lemma~\ref{lem:hijIneq}, applied to inequality~(\ref{eqn:h21}), shows that  
$$\deg((\Sq^0)^{n-1}(\xi_2^{2^1}|\xi_2^{2^1}|\xi_2^{2^1}|\xi_2^{2^1})) = 4 \cdot (2^n -1) < \deg((\Sq^0)^{n-1}(s)).$$
By \cite[Lemma 3.1]{Nakamura}, the $\Sq^0$-operation preserves the coboundary operator of the cobar complex $C(\mathcal{A}_*)$.  Therefore
$$d((\Sq^0)^{n-1}(x)) = (\Sq^0)^{n-1}d(x) = (\Sq^0)^{n-1}(0) = 0.$$
It follows that $h_{2n}^4$ is a permanent cycle, as desired. 
\end{proof}

\noindent \textit{Proof of Theorem~\ref{thm:Kappabar}.}  By Proposition~\ref{prop:h2n4}, the elements $\{h_{2n}^4 \, | \, n \geq 1\}$ survive to the $E_\infty$-page of the modified May spectral sequence, hence they detect some nonzero elements in $\Ext^{4, 2^{n+2} + 2^{n+3}}_{\mathcal{A}_*}(\F, \F)$.  By Lemma~\ref{lem:gn}, these elements must be $\{g_n \, | \, n \geq 1\}$.  The theorem now follows from Proposition~\ref{prop:mayImages} and Propostion~\ref{prop:SliceSSsurvive}. 
\text{ }\hfill $\square$

\begin{rem}\rm
By \cite[Theorem 4.1]{HuKriz}, the map $\pi_* BP_\mathbb{R}^{C_2} \to \pi_* BP_\mathbb{R}^{hC_2}$ is an isomorphism.  Therefore Theorem~\ref{thm:HopfThetan} and Theorem~\ref{thm:Kappabar} hold for the homotopy fixed point of $BP_\mathbb{R}$ as well. 
\end{rem}

Theorem~\ref{thm:HopfThetan} and Theorem~\ref{thm:Kappabar} combine to produce our detection theorems for $MU_\mathbb{R}$ and $BP_\mathbb{R}$:

\begin{thm}[Detection Theorems for $MU_\mathbb{R}$ and $BP_\mathbb{R}$] \label{thm:MURBPRDetection}
The Hopf elements, the Kervaire classes, and the $\bar{\kappa}$-family are detected by the Hurewicz maps $\pi_*\mathbb{S} \to \pi_* MU_\mathbb{R}^{C_2}$ and $\pi_*\mathbb{S} \to \pi_*^{C_2} BP_\mathbb{R}^{C_2}$.  
\end{thm}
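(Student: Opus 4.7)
The plan is to package the theorem as a direct corollary of the two preceding detection results, Theorem~\ref{thm:HopfThetan} and Theorem~\ref{thm:Kappabar}, combined with the standard $2$-local equivariant splitting of $MU_\mathbb{R}$. For the $BP_\mathbb{R}$ statement, there is nothing more to do: each element $\eta, \nu, \sigma$ is represented on the $E_\infty$-page of the classical Adams spectral sequence by $h_1, h_2, h_3$, so Theorem~\ref{thm:HopfThetan} gives nonzero Hurewicz images in $\pi_*BP_\mathbb{R}^{C_2}$; similarly the surviving Kervaire classes $\theta_j$ for $1 \le j \le 5$ are represented by $h_j^2$ and are detected by Theorem~\ref{thm:HopfThetan}; and $\bar\kappa \in \pi_{20}\mathbb{S}$ and $\bar\kappa_2 \in \pi_{44}\mathbb{S}$ are represented by $g_1$ and $g_2$, so Theorem~\ref{thm:Kappabar} gives nonzero Hurewicz images. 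The same argument applies to any yet-undetermined $\theta_6$ or higher $\bar\kappa_k$, should they survive.

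To transfer the conclusion from $BP_\mathbb{R}$ to $MU_\mathbb{R}$, I would invoke the $C_2$-equivariant Quillen idempotent. Two-locally, there is a $C_2$-equivariant splitting
$$MU_{\mathbb{R},(2)} \;\simeq\; \bigvee_{\alpha} \Sigma^{n_\alpha \rho_2} BP_\mathbb{R},$$
in which $BP_\mathbb{R}$ appears as a wedge summand (corresponding to the unit). Taking $C_2$-fixed points and applying $\pi_*(-)$, the Hurewicz map to $\pi_*MU_\mathbb{R}^{C_2}$ admits the Hurewicz map to $\pi_*BP_\mathbb{R}^{C_2}$ as a retract after $2$-localization. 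Since nonzero $2$-locally implies nonzero integrally, detection in $\pi_*BP_\mathbb{R}^{C_2}$ upgrades immediately to detection in $\pi_*MU_\mathbb{R}^{C_2}$.

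The proof is essentially bookkeeping once Theorems~\ref{thm:HopfThetan} and~\ref{thm:Kappabar} are in hand, and the only point requiring a line of justification is the $C_2$-equivariant $BP_\mathbb{R}$-splitting of $MU_{\mathbb{R},(2)}$. This is by now standard (established, for example, through Hu--Kriz's work on Real-oriented spectra and the Real formal group law), so there is no genuine obstacle. I would therefore expect the written proof to consist of one short paragraph deducing the $BP_\mathbb{R}$ statement from the two prior theorems by quoting which element of $\mathrm{Ext}_{\mathcal{A}_*}$ represents each homotopy class, followed by a second short paragraph invoking the splitting to pass to $MU_\mathbb{R}$.
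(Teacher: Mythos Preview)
Your proposal is correct and matches the paper's approach exactly: the paper states the theorem immediately after Theorems~\ref{thm:HopfThetan} and~\ref{thm:Kappabar} with the single sentence ``Theorem~\ref{thm:HopfThetan} and Theorem~\ref{thm:Kappabar} combine to produce our detection theorems for $MU_\mathbb{R}$ and $BP_\mathbb{R}$,'' and the $2$-local splitting of $MU_\mathbb{R}$ into copies of $BP_\mathbb{R}$ was already invoked in the introduction to reduce to the $BP_\mathbb{R}$ case. If anything, you have supplied more justification than the paper does.
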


\begin{thm} \label{normDetection}
Let $E$ be an $E_\infty$ H-spectrum.  If the $H$-fixed point spectrum of $E$ detects a class $x \in \pi_*\mathbb{S}$, then the $G$-fixed point spectrum of $(N_H^G E)$ detects $x$ as well. 
\end{thm}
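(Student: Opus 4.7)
The plan is to leverage the restriction map $\pi_*^G N_H^G E \to \pi_*^H N_H^G E$ and reduce the problem to an injectivity statement about the unit of the norm--restriction adjunction on commutative ring spectra.

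First, I would apply $N_H^G$ to the unit of $E$ as an $H$-$E_\infty$-ring. Since $N_H^G$ preserves unit objects of the symmetric monoidal structure, this produces a canonical $G$-equivariant map $\mathbb{S} \to N_H^G E$ realizing the unit of $N_H^G E$ as a $G$-commutative ring. Restricting this back to $H$-spectra must, by uniqueness of the unit of the $H$-ring $\mathrm{res}_H^G N_H^G E$, coincide with the composite
\[
\mathbb{S} \longrightarrow E \xrightarrow{\;\alpha\;} \mathrm{res}_H^G N_H^G E,
\]
where $\alpha$ is the unit of the adjunction $N_H^G \dashv \mathrm{res}_H^G$ between commutative $H$-algebras and commutative $G$-algebras. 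Combining this identification with the naturality of the restriction map $\pi_*^G(-) \to \pi_*^H(-)$ yields, for the Hurewicz map $h_G\colon \pi_*\mathbb{S} \to \pi_*^G N_H^G E$ of interest, the key identity
\[
\mathrm{res}_H^G\bigl(h_G(x)\bigr) \;=\; \alpha_*(\tilde x) \quad \text{in } \pi_*^H N_H^G E,
\]
where $\tilde x \in \pi_*^H E$ is the nonzero $H$-equivariant Hurewicz image of $x$.

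Second, I would show that $\alpha_*\colon \pi_*^H E \to \pi_*^H \mathrm{res}_H^G N_H^G E$ is split injective. The underlying spectrum of $\mathrm{res}_H^G N_H^G E$ is the smash product $\bigwedge_{gH \in G/H} g^*E$, and $\alpha$ is the algebra inclusion into the factor indexed by the identity coset $eH$ (which is fixed by the $H$-action on $G/H$), sending $e \mapsto e \wedge 1 \wedge \cdots \wedge 1$. The $E_\infty$-structure on $E$ supplies an iterated multiplication $\mu\colon \bigwedge_{gH\in G/H} g^*E \to E$, and $\mu \circ \alpha = \mathrm{id}_E$ by the unit axiom for $E$. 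Crucially, $\mu$ is $H$-equivariant because $E$ is genuinely $E_\infty$ in the $H$-equivariant sense, so it admits coherent multiplications indexed by admissible $H$-sets (in particular by $G/H$). Consequently $\mu_*$ retracts $\alpha_*$, so $\alpha_*(\tilde x) \neq 0$, hence $\mathrm{res}_H^G(h_G(x)) \neq 0$, and a fortiori $h_G(x) \neq 0 \in \pi_*^G N_H^G E$. This is precisely the statement that $(N_H^G E)^G$ detects $x$.

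The step I expect to be the main obstacle is verifying the $H$-equivariance of the iterated multiplication $\mu$. The source $\bigwedge_{G/H} g^*E$ carries the norm $H$-action that simultaneously permutes the smash factors according to the $H$-action on $G/H$ and twists the action on each factor by conjugation. The equivariance of $\mu$ is not a formal consequence of $E$ being $E_\infty$ on its underlying spectrum; rather, it requires $E$ to be $E_\infty$ in the genuine equivariant (i.e.\ $N_\infty$) sense, which is exactly what ``$E_\infty$ $H$-spectrum'' must mean in the Hill--Hopkins--Ravenel framework for norms of commutative rings.
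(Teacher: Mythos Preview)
Your proposal is correct and follows essentially the same approach as the paper: both use the factorization $\pi_*\mathbb{S} \to \pi_*^G N_H^G E \to \pi_*^H \mathrm{res}_H^G N_H^G E \to \pi_*^H E$, where the last map is the $H$-equivariant iterated multiplication on the smash product $\bigwedge_{G/H} g^*E$, and observe that the composite is the original Hurewicz map into $E^H$. The paper's proof is terser and simply asserts the existence of the multiplication map $(E \wedge \cdots \wedge E)^H \to E^H$ without dwelling on its $H$-equivariance, whereas you correctly identify this as the substantive point requiring the genuine $E_\infty$ (i.e.\ $N_\infty$) hypothesis.
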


\begin{proof}
This follows from the following commutative diagram: 
$$\begin{tikzcd}
(N_H^G E)^G \ar[r] & (i_H^*N_H^G E)^H = (E \wedge \cdots \wedge E)^H \ar[r] & E^H \\ 
& \mathbb{S} \ar[ul] \ar[u] \ar[ur]&
\end{tikzcd}$$
The first horizontal map is the map from the $G$-fixed point to the $H$-fixed point.  The second horizontal map is obtained by the multiplicative structure on $E$.  Taking $\pi_*(-)$ to the entire diagram gives the maps 
$$\pi_*\mathbb{S} \to \pi_* (N_H^G E)^G \to \pi_* (E^H).$$
Since $x$ maps to a nonzero element in $\pi_*(E^H)$ under the composition map, $x$ must map to a nonzero element in $\pi_*(N_H^G E)^G$ as well.  
\end{proof}

Letting $E = MU_\mathbb{R}$ in Theorem~\ref{normDetection} gives the following:

\begin{cor}\label{cor:DetectionMUG}
For any finite group $G$ containing $C_2$, the $G$-fixed point of $MU^{((G))}$ detects the Hopf elements, the Kervaire classes, and the $\bar{\kappa}$-family. 
\end{cor}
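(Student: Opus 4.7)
The proof is essentially a one-line combination of two results that have already been established earlier in the paper, so the proposal is less about strategy and more about verifying that the hypotheses line up. The plan is to apply Theorem~\ref{normDetection} with $H = C_2$, $E = MU_\mathbb{R}$, and an arbitrary finite group $G$ containing $C_2$, using Theorem~\ref{thm:MURBPRDetection} as the input to its hypothesis.

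First I would invoke Theorem~\ref{thm:MURBPRDetection}, which gives that each element $x$ in the Hopf family $\{\eta,\nu,\sigma\}$, in the Kervaire family $\{\theta_j : 1 \leq j \leq 5\}$ (together with $\theta_6$ if it exists), and in the $\bar{\kappa}$-family $\{g_k\text{-permanent cycles}\}$ has nonzero image under the Hurewicz map $\pi_*\mathbb{S} \to \pi_* MU_\mathbb{R}^{C_2}$. Thus the $C_2$-fixed point spectrum of $MU_\mathbb{R}$ detects $x$, which is precisely the hypothesis of Theorem~\ref{normDetection} with $H = C_2$ and $E = MU_\mathbb{R}$.

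Next I would identify the normed spectrum: by the very definition $MU^{((G))} := N_{C_2}^G MU_\mathbb{R}$ used at the outset of the paper, the conclusion of Theorem~\ref{normDetection} reads that the $G$-fixed point $\pi_*^G MU^{((G))}$ detects $x$. Concretely, this is extracted from the commutative diagram in the proof of Theorem~\ref{normDetection}, where the composite $\mathbb{S} \to (N_{C_2}^G MU_\mathbb{R})^G \to (i_{C_2}^* N_{C_2}^G MU_\mathbb{R})^{C_2} \to MU_\mathbb{R}^{C_2}$ uses the restriction map followed by the $E_\infty$-multiplication $MU_\mathbb{R}^{\wedge [G:C_2]} \to MU_\mathbb{R}$, and this composite sends $x$ to a nonzero class by Theorem~\ref{thm:MURBPRDetection}. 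Hence $x$ must already be nonzero in $\pi_*^G MU^{((G))}$.

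There is no genuine obstacle here; the only point worth being careful about is checking that $MU_\mathbb{R}$ qualifies as the $E_\infty$ $H$-spectrum required by Theorem~\ref{normDetection} for $H = C_2$, which it does since $MU_\mathbb{R}$ carries an $E_\infty$ (in fact commutative) ring structure as a genuine $C_2$-spectrum, making $N_{C_2}^G MU_\mathbb{R}$ a genuine commutative $G$-ring spectrum. With this, the corollary follows immediately by specializing Theorem~\ref{normDetection} to each of the three families, completing the proof.
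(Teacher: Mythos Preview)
Your proposal is correct and follows exactly the paper's approach: the paper's proof is the single line ``Letting $E = MU_\mathbb{R}$ in Theorem~\ref{normDetection} gives the following,'' which is precisely what you do, using Theorem~\ref{thm:MURBPRDetection} as the detection input and the identification $MU^{((G))} = N_{C_2}^G MU_\mathbb{R}$.
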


\begin{rem}\rm
Theorem~\ref{normDetection} produces the detection tower
$$\begin{tikzcd}
&& \vdots \ar[d]& \\
&&\pi_*(MU^{((C_{2^n}))})^{C_{2^n}} \ar[d] \\
&& \vdots \ar[d] \\ 
\pi_*\mathbb{S} \ar[rr] \ar[rrd] \ar[rrdd] \ar[rruu]&&\pi_*(MU^{((C_8))})^{C_{8}} \ar[d] \\ 
&&\pi_*(MU^{((C_4))})^{C_{4}} \ar[d] \\ 
&&\pi_*(MU_\mathbb{R})^{C_{2}}.
\end{tikzcd}$$
As we go up the tower, the size of the cyclic group increases, and $\pi_*(MU^{((C_{2^n}))})^{C_{2^n}}$ will detect more classes in the homotopy groups of spheres. 
\end{rem}

\subsection{Detection Theorem for $E\mathbb{R}(n)$}
Recall that the Real Johnson--Wilson theory $E\mathbb{R}(n)$ is constructed from $BP_\mathbb{R}$ by killing $\bar{v}_i$ for $i \geq n+1$ and inverting $\bar{v}_n$.  Its refinement is
$$\begin{tikzcd}
S^0[\bar{v}_1, \bar{v}_2, \ldots] \ar[d] \ar[r] & BP_\mathbb{R} \ar[d] \\
S^0[\bar{v}_1, \ldots, \bar{v}_{n-1}, \bar{v}_n^\pm] \ar[r] & E\mathbb{R}(n).
\end{tikzcd}$$
To prove the detection theorem for $E\mathbb{R}(n)$, we analyze the composite map 
$$\text{MMaySS}(\mathbb{S}) \to \SliceSS(BP_\mathbb{R}) \to \SliceSS(E\mathbb{R}(n)).$$ 

\begin{lem}\label{lem:ER(n)SliceSS}
In the slice spectral sequence for $\pi_\bigstar^{C_2}E\mathbb{R}(n)$, the classes $a_\sigma$, $\bar{v}_1$, $\ldots$, $\bar{v}_{n-1}$, $\bar{v}_n^\pm$ are permanent cycles.  For $1 \leq k \leq n$, the differentials $d_i (u_{2\sigma}^{2^{k-1}})$ are zero for $i < 2^{k+1} -1$, and 
$$d_{2^{k+1} -1} (u_{2\sigma}^{2^{k-1}}) =  \bar{v}_ka_\sigma^{2^{k+1}-1}.$$
The class $u_{2\sigma}^{2^{n}}$ is a permanent cycle.  
\end{lem}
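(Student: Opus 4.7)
The plan is to exploit naturality of the slice filtration with respect to the quotient map $BP_\mathbb{R} \to E\mathbb{R}(n)$, obtaining a map of spectral sequences
\[
\SliceSS(BP_\mathbb{R}) \to \SliceSS(E\mathbb{R}(n)).
\]
Applying the Slice Theorem as in Proposition~\ref{prop:BPRSlices} to the refinement $S^0[\bar{v}_1, \ldots, \bar{v}_{n-1}, \bar{v}_n^{\pm}] \to E\mathbb{R}(n)$, the $E_2$-page on the right is the polynomial algebra $\mathbb{Z}[\bar{v}_1, \ldots, \bar{v}_{n-1}, \bar{v}_n^{\pm}, u_{2\sigma}, a_\sigma]/(2a_\sigma)$, and the comparison map sends $\bar{v}_i \mapsto \bar{v}_i$ for $i \leq n$, $\bar{v}_i \mapsto 0$ for $i \geq n+1$, and leaves $a_\sigma, u_{2\sigma}$ fixed.

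With this naturality in hand, the statements that $a_\sigma$ and $\bar{v}_1, \ldots, \bar{v}_n$ are permanent cycles (and hence $\bar{v}_n^{-1}$ too, since $\bar{v}_n$ is a permanent cycle and becomes a unit) follow immediately from Proposition~\ref{prop:BPRSliceSS}. For $1 \leq k \leq n$, the vanishing of $d_i(u_{2\sigma}^{2^{k-1}})$ for $i < 2^{k+1}-1$ and the identity $d_{2^{k+1}-1}(u_{2\sigma}^{2^{k-1}}) = \bar{v}_k a_\sigma^{2^{k+1}-1}$ are transported directly from $\SliceSS(BP_\mathbb{R})$. The target is nonzero on the $E_{2^{k+1}-1}$-page: an earlier differential $d_{2^{j+1}-1}(u_{2\sigma}^{2^{j-1}}z) = \bar{v}_j a_\sigma^{2^{j+1}-1} z$ with $j < k$ could hit $\bar{v}_k a_\sigma^{2^{k+1}-1}$ only by taking $z$ divisible by $\bar{v}_j^{-1}\bar{v}_k$, which is impossible because $\bar{v}_j$ is a non-unit prime element of the $E_2$-page for $j < n$.

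The heart of the argument is showing that $u_{2\sigma}^{2^n}$ is a permanent cycle. In $\SliceSS(BP_\mathbb{R})$, the first and only differential on $u_{2\sigma}^{2^n}$ is $d_{2^{n+2}-1}(u_{2\sigma}^{2^n}) = \bar{v}_{n+1} a_\sigma^{2^{n+2}-1}$, whose image in $\SliceSS(E\mathbb{R}(n))$ vanishes since $\bar{v}_{n+1} = 0$. For any potential differential $d_r(u_{2\sigma}^{2^n})$ in $\SliceSS(E\mathbb{R}(n))$ with $r \geq 2^{n+2}-1$, the target lives in Adams filtration $r$; because $a_\sigma$ is the only positive-filtration generator of the $E_2$-page, the target is divisible by $a_\sigma^r$, and in particular by $a_\sigma^{2^{n+1}-1}$. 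Multiplying the $k=n$ differential already established by the unit $\bar{v}_n^{-1}$ gives
\[
d_{2^{n+1}-1}(\bar{v}_n^{-1} u_{2\sigma}^{2^{n-1}}) = a_\sigma^{2^{n+1}-1},
\]
so $a_\sigma^{2^{n+1}-1}$ becomes a boundary on the $E_{2^{n+1}}$-page. Since $r \geq 2^{n+2}-1 > 2^{n+1}-1$, the target is already zero on the $E_r$-page, forcing $d_r(u_{2\sigma}^{2^n}) = 0$.

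The principal obstacle is ruling out exotic long differentials on $u_{2\sigma}^{2^n}$ in $\SliceSS(E\mathbb{R}(n))$ that need not lift to $\SliceSS(BP_\mathbb{R})$. This is handled uniformly by the $a_\sigma$-divisibility argument combined with the invertibility of $\bar{v}_n$, which promotes $a_\sigma^{2^{n+1}-1}$ from a permanent cycle (as in $BP_\mathbb{R}$) to a boundary, closing off all longer differentials on $u_{2\sigma}^{2^n}$ at once.
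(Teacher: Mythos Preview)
Your argument is correct and follows the same overall strategy as the paper: transport the differentials from $\SliceSS(BP_\mathbb{R})$ via naturality, then observe that the would-be target $\bar{v}_{n+1}a_\sigma^{2^{n+2}-1}$ of $d_{2^{n+2}-1}(u_{2\sigma}^{2^n})$ vanishes in $E\mathbb{R}(n)$.

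The one genuine difference is in how you dispose of longer differentials on $u_{2\sigma}^{2^n}$. The paper simply notes that for $r>2^{n+2}-1$ the target bidegree is empty already on the $E_2$-page: the slice at level $r-1$ contributes $H\underline{\mathbb{Z}}_{p+q\sigma}$ with $p<0$ and $q<0$, which is zero. Your route instead exploits the invertibility of $\bar{v}_n$ to produce the differential $d_{2^{n+1}-1}(\bar{v}_n^{-1}u_{2\sigma}^{2^{n-1}})=a_\sigma^{2^{n+1}-1}$, yielding a horizontal vanishing line at filtration $2^{n+1}-1$ on $E_{2^{n+1}}$ which kills all possible longer targets at once. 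Both are valid; the paper's argument is more elementary (a one-line degree check), while yours proves something strictly stronger---the vanishing line---and in fact this is the mechanism behind the known periodicity of $E\mathbb{R}(n)^{C_2}$. One small point to be careful about in your version: deducing that \emph{every} class in filtration $\geq 2^{n+1}-1$ on $E_{2^{n+1}}$ vanishes requires knowing that such a class can be written as $a_\sigma^{2^{n+1}-1}$ times a $d_{2^{n+1}-1}$-cycle; this follows from $a_\sigma$-linearity of the differentials and the structure of the positive cone, but is worth stating explicitly.
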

 
\begin{proof}
This is immediate by comparing to the slice spectral sequence of $BP_\mathbb{R}$ (Proposition~\ref{prop:BPRSliceSS}).  For the class $u_{2\sigma}^{2^{n}}$, it is supposed to support a differential to $\bar{v}_{n+1}a_\sigma^{2^{n+2}-1}$.  However, $\bar{v}_{n+1}$ is 0 in the slice spectral sequence for $E\mathbb{R}(n)$.  This implies that $u_{2\sigma}^{2^{n}}$ is a $d_{2^{n+2}-1}$-cycle.  Furthermore, for degree reasons, there are no classes in the appropriate degrees that can be hit by longer differentials from $u_{2\sigma}^{2^{n}}$.  It follows that the class $u_{2\sigma}^{2^{n}}$ is a permanent cycle.  
\end{proof}
 
\begin{thm}[Detection Theorem for $E\mathbb{R}(n)$] \hfill\label{thm:ER(n)Detection}
\begin{enumerate}
\item For $1 \leq k \leq n$, if the element $h_{k}$ or $h_{k}^2$ in $\Ext_{\mathcal{A}_*} (\F, \F)$ survives to the $E_\infty$-page of the Adams spectral sequence, then its image under the Hurewicz map $\pi_*\mathbb{S} \to \pi_* E\mathbb{R}(n)^{C_2}$ is nonzero.  
\item For $1 \leq k \leq n-1$, if the element $g_k \in \Ext_{\mathcal{A}_*}^{4, 2^{n+2}+2^{n+3}}(\F, \F)$ survives to the $E_\infty$-page of the Adams spectral sequence, then its image under the Hurewicz map $\pi_*\mathbb{S} \to \pi_* E\mathbb{R}(n)^{C_2}$ is nonzero. 
\end{enumerate}
\end{thm}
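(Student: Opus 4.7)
The plan is to analyze the map of slice spectral sequences $\SliceSS(BP_\mathbb{R}) \to \SliceSS(E\mathbb{R}(n))$ induced by the quotient $BP_\mathbb{R} \to E\mathbb{R}(n)$, and to trace the detecting classes of $h_k$, $h_k^2$, and $g_k$ from one to the other. On $E_2$-pages, this map sends $\bar{v}_i \mapsto \bar{v}_i$ for $1 \leq i < n$, inverts $\bar{v}_n$, kills $\bar{v}_i$ for $i > n$, and fixes $u_{2\sigma}$ and $\as$. Combining Theorem~\ref{thm:MURBPRDetection} with Proposition~\ref{prop:mayImages}, the isomorphism of Theorem~\ref{thm:AmSliceSS}, and the identity $u_\sigma^{2^{k+2}} = u_{2\sigma}^{2^{k+1}}$, whenever $h_k$, $h_k^2$, or $g_k$ survives the classical Adams spectral sequence, its image in $\pi_*^{C_2} BP_\mathbb{R}$ is detected on $E_\infty(\SliceSS(BP_\mathbb{R}))$ by the monomial $\bar{v}_k \as^{2^k-1}$, $\bar{v}_k^2 \as^{2(2^k-1)}$, or $\bar{v}_{k+1}^4 u_{2\sigma}^{2^{k+1}} \as^{4(2^k-1)}$, respectively. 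The images of these monomials on $E_2(\SliceSS(E\mathbb{R}(n)))$ are nonzero exactly in the asserted ranges: $1 \leq k \leq n$ for the Hopf and Kervaire families (since $\bar{v}_k$ persists in $E\mathbb{R}(n)$), and $1 \leq k \leq n-1$ for the $\bar{\kappa}$-family (since we need $\bar{v}_{k+1}$ to persist).

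Next I would verify that these image classes survive to $E_\infty(\SliceSS(E\mathbb{R}(n)))$. By Lemma~\ref{lem:ER(n)SliceSS}, the slice differentials in $E\mathbb{R}(n)$ are generated under Leibniz by $d_{2^{j+1}-1}(u_{2\sigma}^{2^{j-1}}) = \bar{v}_j \as^{2^{j+1}-1}$ for $1 \leq j \leq n$, while $u_{2\sigma}^{2^n}$ is a permanent cycle. The Hopf and Kervaire image classes are products of permanent cycles, and similarly the $g_k$ image class is a product of permanent cycles in the boundary case $k = n-1$, since there $u_{2\sigma}^{2^{k+1}} = u_{2\sigma}^{2^n}$ is permanent. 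For $k \leq n-2$, the only potentially nontrivial Leibniz differential emanating from the $g_k$ image is $d_{2^{k+3}-1}$, whose target $\bar{v}_{k+1}^4 \bar{v}_{k+2} \as^{3\cdot 2^{k+2}-5}$ is itself a boundary on an earlier page: it equals the $d_{2^{k+2}-1}$-image of
\[
u_{2\sigma}^{2^k}\,\bar{v}_{k+1}^3\,\bar{v}_{k+2}\,\as^{2^{k+3}-4},
\]
which is well defined in $\SliceSS(E\mathbb{R}(n))$ because $k+1, k+2 \leq n$. Showing each image class is not itself a boundary reduces to matching $\bar{v}$-content and $\as$-exponent of every Leibniz differential target against our monomial, which forces $\bar{v}_j^{-1}$-factors for $j < n$ or negative $\as$-exponents, neither available on the $E_2$-page.

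The principal obstacle is the $k \leq n-2$ case of the $\bar{\kappa}$-family: producing the cancelling element above and verifying it survives as a cycle to the $E_{2^{k+2}-1}$-page is the direct analog of the cancellation underlying Proposition~\ref{prop:SliceSSsurvive} for $BP_\mathbb{R}$, and it is also what forces the range in part~(2) to stop at $k = n-1$. Once survival is established, the theorem follows from the commutative diagram of Hurewicz maps $\pi_*\mathbb{S} \to \pi_*^{C_2} BP_\mathbb{R} \to \pi_*^{C_2} E\mathbb{R}(n)$: if $h_k$, $h_k^2$, or $g_k$ survives the Adams spectral sequence and represents $x \in \pi_*\mathbb{S}$, then the image of $x$ in $\pi_*^{C_2} E\mathbb{R}(n)$ is detected by a nonzero class on $E_\infty(\SliceSS(E\mathbb{R}(n)))$, hence is nonzero.
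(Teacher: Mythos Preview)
Your proposal is correct and follows the same approach as the paper, which simply cites Proposition~\ref{prop:mayImages}, Theorems~\ref{thm:HopfThetan} and~\ref{thm:Kappabar}, and Lemma~\ref{lem:ER(n)SliceSS} and declares survival ``immediate''. Your explicit cancellation argument for the $g_k$ case with $k \leq n-2$ is more than the paper provides; you could shorten it by noting that permanent cycles in $\SliceSS(BP_\mathbb{R})$ (Proposition~\ref{prop:SliceSSsurvive}) map to permanent cycles under a map of spectral sequences, so only the not-a-boundary check is genuinely new in the target.
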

\begin{proof}
By Proposition~\ref{prop:mayImages}, Theorem~\ref{thm:HopfThetan}, and Theorem~\ref{thm:Kappabar}, it suffices to show that the classes $\bar{v}_ka_\sigma^{2^k-1}$ ($1 \leq k \leq n$), $\bar{v}_k^2a_\sigma^{2(2^k-1)}$ ($1 \leq k \leq n$), and $\bar{v}_{k+1}^4u_{\sigma}^{2^{k+2}}a_\sigma^{4(2^k-1)}$ ($1 \leq k \leq n-1$) survive to the $E_\infty$-page of the slice spectral sequence of $E\mathbb{R}(n)$.  This is immediate from Lemma~\ref{lem:ER(n)SliceSS}.
\end{proof}

\begin{rem}\rm
By the homotopy fixed point theorem (\cite[Theorem 10.8]{HHRKervaire}), the spectrum $E\mathbb{R}(n)$ is cofree.  This means that $\pi_* E\mathbb{R}(n)^{C_2} \to \pi_* E\mathbb{R}(n)^{hC_2}$ is an isomorphism.  Therefore Theorem~\ref{thm:ER(n)Detection} holds for the homotopy fixed point of $E\mathbb{R}(n)$ as well. 
\end{rem}

\begin{rem}\rm
The detection theorem for $E\mathbb{R}(n)$ also holds for $v_{n}^{-1} MU_\mathbb{R}$ as it splits as a wedge of suspensions of $E\mathbb{R}(n)$ (with $E\mathbb{R}(n)$ itself being one of the wedge summands). 
\end{rem}

\bibliographystyle{alpha}

\bibliography{references}

\end{document}